\theoremstyle:=definition,remark,plain\do{%
        \expandafter\g@addto@macro\csname th@\theoremstyle\endcsname{%
            \addtolength\thm@preskip\parskip
            }%
        }
\definecolor{dnrbl}{rgb}{0,0,0.3}
\definecolor{dnrgr}{rgb}{0,0.3,0}
\definecolor{dnrre}{rgb}{0.5,0,0}
\theoremstyle{plain}
\newtheorem{thm}{Theorem}[section]
\newtheorem{lem}[thm]{Lemma}
\newtheorem{coro}[thm]{Corollary}
\theoremstyle{definition}
\newtheorem{defi}[thm]{Definition}
\let\c@table\c@figure
\newcommand{\Nat}{\mathbb{N}}
\newcommand{\restr}{\upharpoonright}  
\newcommand{\un}{\uparrow} 
\newcommand{\de}{\downarrow} 
\newcommand{\bigo}[1]{\mathop{\bf O}\/\left({#1}\right)}
\newcommand{\wgt}[1]{\mathop{\mathtt{wgt}}\/\left({#1}\right)}
\newcommand{\wgtb}[1]{\mathop{\mathtt{wgt}}\/\big({#1}\big)}
\newcommand{\ml}{Martin-L\"{o}f }
\newcommand{\ie}{i.e.\ }
\newcommand{\ce}{c.e.\ }
\newcommand{\lce}{left-c.e.\ }
\newcommand{\pf}{prefix-free }
\newcommand{\twomel}{2^{<\omega}}
\renewenvironment{abstract}
 { \normalsize
  \list{}{
    \setlength{\leftmargin}{.0cm}%
    \setlength{\rightmargin}{\leftmargin}%
    }%
  \item {\bf \abstractname.} \relax}
 {\endlist}
\title{Monotonous betting strategies in warped casinos
\thanks{Barmpalias was supported by the 
1000 Talents Program for Young Scholars from the Chinese Government No.~D1101130, 
NSFC grant No.~11750110425 and Grant No.~ISCAS-2015-07 from the Institute of Software.
Fang Nan was supported by the China Scholarship Council of the Ministry of Education of China.}}
\author{George Barmpalias  \and Nan Fang \and Andrew Lewis-Pye}
\date{\today}
\begin{document}
\maketitle
\begin{abstract}
Suppose that the outcomes of a roulette table  are not entirely random, in the sense that
there exists a successful betting strategy.
Is there a successful `separable' strategy, in the sense that it does not use
the winnings from betting on red in order to bet on black, and vice-versa? 
We study this question from an algorithmic point of view and observe that
every strategy $M$ can be replaced by a separable strategy
which is computable from $M$ and successful on any outcome-sequence where $M$ is successful.
We then consider the case of mixtures and show:
(a) there exists an effective mixture of separable strategies which succeeds on every casino sequence 
with effective Hausdorff dimension less than 1/2; (b) there exists a casino sequence of 
effective Hausdorff dimension 1/2 on which no effective mixture of separable strategies succeeds.
Finally we extend (b) to a more general class of strategies.
 \end{abstract}
\vspace*{\fill}
\noindent{\bf George Barmpalias}\\[0.2em]
\noindent
State Key Lab of Computer Science, 
Institute of Software, Chinese Academy of Sciences, Beijing, China.\\[0.2em] 
\textit{E-mail:} \texttt{\textcolor{dnrgr}{barmpalias@gmail.com}}.
\textit{Web:} \texttt{\textcolor{dnrre}{http://barmpalias.net}}\par
\addvspace{\medskipamount}\medskip\medskip

\noindent{\bf Nan Fang}\\[0.2em]
\noindent Institut f\"{u}r Informatik, Ruprecht-Karls-Universit\"{a}t Heidelberg, Germany.\\[0.1em]
\textit{E-mail:} \texttt{\textcolor{dnrgr}{nan.fang@informatik.uni-heidelberg.de}.}
\textit{Web:} \texttt{\textcolor{dnrre}{http://fangnan.org}} \par
\addvspace{\medskipamount}\medskip\medskip

\noindent{\bf Andrew Lewis-Pye}\\[0.2em]  
\noindent Department of Mathematics,
Columbia House, London School of Economics, 
Houghton St., London, WC2A 2AE, United Kingdom. \\
\textit{E-mail:} \texttt{\textcolor{dnrgr}{A.Lewis7@lse.ac.uk.}}
\textit{Web:} \texttt{\textcolor{dnrre}{http://lewis-pye.com.}} 

\vfill \thispagestyle{empty}
\clearpage

\section{Introduction}\label{sQtcO25fky}\label{VYuXf7zTQg}
A bet in a game of chance is usually determined by two values: the favorable outcome and the wager $x$ one bets on
that outcome. If the outcome turns out to be the one chosen, the player gains profit $x$;  otherwise the player loses the wager $x$.
Many gambling systems for repeated betting are based on elaborate choices for the wager $x$,
while leaving the choice of outcome constant. In this work we are interested in
such `monotonous' strategies, which we also call {\em single-sided},
and their linear combinations (mixtures). 
Consider the game of roulette, for example,  and the binary outcome of red/black.\footnote{Roulettes
have a third outcome 0, which is neither red nor black, and which gives a slight advantage to the house. 
For simplicity in our discussion we
ignore this additional outcome.} Perhaps the most infamous roulette system 
is the {\em martingale},\footnote{for the origin of this term, its use as a betting system and its 
adoption in mathematics, see \cite{histmartinguse} and \cite{martmathintel}.} 
where one constantly bets on a fixed color,
say {\em red}, starts with an initial wager $x$ 
and doubles the wager after each loss. At the first winning stage all losses 
are then recovered and an additional profit $x$ is achieved. Such systems rely on  the fairness of the game, 
in the form of a law of large numbers that has to obeyed in the 
limit (and, of course, require unbounded initial resources in order to 
guarantee success with probability 1). In the example of the martingale the relevant law is that, with probability 1, 
there must be a round where the outcome is red. Many other systems have been developed 
that use more tame series of wagers (compared to the exponential 
increase of the martingale), and which appeal to various forms of the law of large numbers.\footnote{Well-known systems 
of this kind are: the D'Alembert System, the Fibonacci system, the Labouch\`{e}re system or split martingale, and many others. 
See, for example, \texttt{https://www.roulettesystems.com}.}

When the casino is biased, \ie the outcomes are not entirely random, we ought to be able to produce
more successful strategies. Suppose that we bet on repeated  coin-tosses, and
that we are given the information that the coin has a bias. 
In this case it is well known that we can define an effective strategy that, independent of the
bias of the coin (\ie which side the coin is biased on, or even any lower bounds on the bias), is guaranteed to gain
unbounded capital, starting from any non-zero initial capital. This strategy, as we explain in
\S \ref{OC53m9ZSeG}, is the mixture of two single-sided strategies, where
the first one always bets on heads and the second one always bets on tails.
A slightly modified strategy is successful on every coin-toss sequence $X$ except for the case that
the limit of the relative frequency of heads exists and is 1/2.
The same kind of strategy exists for the case where the
relative frequency of heads is 1/2, but beyond some point the number of tails is never smaller than
the number of heads (or vice-versa). These examples show that
many typical betting strategies are {\em separable} in the sense that
they can be expressed as a the sum of two single-sided strategies.
In the following we refer to any binary sequence which is produced 
by a (potentially partially) random process, as a {\em casino sequence}.
Note that if a separable strategy succeeds along a casino sequence, one of its
single-sided parts  has to succeed. The only case where separability is
stronger than single-sidedness is when we consider success with respect to classes of casino sequences.

A casino sequence may have a (more subtle) bias while satisfying several known laws of large numbers, such as
the relative frequency of 0s tending to 1/2. Formally, we can say that a casino sequence $X$ is biased if
there is an `effective' (as in `constructive' or `definable') 
betting strategy which succeeds on $X$, \ie produces an unbounded capital, starting from a finite initial capital. 
By adopting stronger or weaker formalizations
of the term `effective' one obtains different strengths of bias, or as we usually say, {\em non-randomness} of $X$.
In general, `effective' means that the strategy is definable in a simple way, such as being  programmable in a 
Turing machine.
%
Suppose that we know that the casino sequence $X$ has a bias in this more general sense, \ie 
 there exists some `effective' betting strategy which succeeds on it. 
The starting point of the present article is the following question:
\begin{equation}\label{ID1kUpxqvY}
\parbox{12.5cm}{\em Is it possible to succeed on any such warped casino sequence
with a single-sided `effective' betting strategy, \ie one that can only place bets on 0 
or  only  on 1?}
\end{equation}
In other words, can any `effective' betting strategy be replaced by a single-sided
`effective' betting strategy without sacrificing success? An equivalent way to ask this question is as follows.
\begin{equation}\label{ZrEv5hZl2}
\parbox{14.5cm}{\em Suppose that we are betting with 
the restriction that we cannot use our earnings from the
successful bets on 0s in order to bet on 1s, 
and vice-versa. Can we win on any  casino-sequence $X$ which is `biased' in the 
sense that there is an (unrestricted) strategy which wins on $X$?}
\end{equation}
We will see that, depending on the way we formalise the term `effective', and especially the term
{\em effective monotonous betting} these questions can have a positive or negative (or even unknown) answer.

{\bf Our results.}
A straightforward interpretation of `effective' is computable, in the sense that there is a Turing machine
that decides, given  each initial segment of the casino sequence: 
\begin{equation*}
\parbox{12cm}{(a) how much of the current capital to bet; \hspace{0.3cm}(b) which outcome to bet on.}
\end{equation*}
%
%
These choices, in  combination with the revelation of the outcome, determine
the capital at the beginning of the next betting stage.
In \S\ref{aRvZodD8FN} we show that in this case questions \eqref{ID1kUpxqvY} and \eqref{ZrEv5hZl2} 
have a positive answer.
Another formalisation of `effective'  which is very standard in computability and algorithmic information theory
(and used in the standard definition of algorithmic randomness) is `computably enumerable'. 
When applied to betting strategies this gives a notion which is equivalent
to infinite mixtures of strategies which are
generated by a single Turing machine, see the introductory part of \S\ref{a9WlWLzsyd}.
There are two very different ways that one can define computably enumerable monotonous
strategies:
\begin{enumerate}[\hspace{0.3cm}(i)]
\item {\em Uniform way:} as the mixture (linear combination) of a computable {\em family of monotonous strategies} 
with bounded total initial capital;
\item {\em Non-uniform way:} as a {monotonous strategy} that can be expressed 
as the mixture of a computable family of  strategies with bounded total initial capital.
\end{enumerate}
In the uniform case we show that
questions \eqref{ID1kUpxqvY} and \eqref{ZrEv5hZl2} have negative answers. In fact, we show that
there are casino sequences $X$ on which mixtures of computable families of strategies
generate infinite capital exponentially fast, in the sense 
that\footnote{.} 
\begin{equation}\label{q4Zg8ozv2}
\limsup_n \frac{M(X\restr_n)}{\alpha^n}=\infty
\hspace{0.5cm}\textrm{where $\alpha\in (1,\sqrt{2})$ and $M$ is the capital after the first $n$ bets on $X$,}
\end{equation}
where $X\restr_n$ denotes the first $n$ bits of $X$, 
but no strategy under (i) succeeds.
We also show the converse, \ie that if a computably enumerable strategy 
(\ie a mixture of computable family of strategies)
$M$ exists such that
$\limsup_n M(X\restr_n)/\alpha^n=\infty$ for some
$\alpha>\sqrt{2}$, then there exists a single-sided computably enumerable strategy $N$
which succeeds on $X$, in the sense that 
$\lim_n N(X\restr_n)=\infty$.
We will see that these results can also be stated in terms of the effective Hausdorff dimension of 
the casino sequence.
Under the uniform case we also
consider a more general class of strategies, which we call {\em decidably-sided}, and
which are not necessarily monotonous,  but there is a computable prediction (or choice) function which 
indicates the favorable outcome at each state. We then
generalise our previous arguments  and show that there is a casino sequence
and a computably enumerable betting strategy $M$ that strongly succeeds on it as before, in the sense of \eqref{q4Zg8ozv2}, but such that 
no decidably-sided computably enumerable strategy succeeds on it.

Monotonous strategies under the non-uniform clase (ii) are intuitively more powerful, as we explain in \S\ref{6E1fkvgPTH},
and our arguments do not appear to be adequate for answering 
questions  \eqref{ID1kUpxqvY} and \eqref{ZrEv5hZl2}  in this case.
The study of the power of strategies in (ii) is quite interesting from the point of view of 
stochastic processes, as it relates to key concepts such as martingale decompositions, variation and various forms of
boundedness or integrability. Questions \eqref{ID1kUpxqvY} and \eqref{ZrEv5hZl2}  under (ii) are also directly relevant to
a question about the separation of two randomness notions in algorithmic information theory, 
asked by Kastermans (see \citep{Downey:2012:RCM:2367234} and \citep[\S 7.9]{rodenisbook}).
As we point out in \S \ref{7FjYtfZZf}, a positive answer of \eqref{ID1kUpxqvY} or \eqref{ZrEv5hZl2}
for the case of strategies under (ii) would give a very simple and elegant positive answer to Kasterman's question.

{\bf Outline of the presentation.}
The concept of a betting strategy in terms of 
martingale functions
is formalised in the first part of \S \ref{a9WlWLzsyd}. Monotonous strategies are formalised 
in \S\ref{JbvJlths9B} and effective versions of mixtures of monotonous strategies are given in 
\S\ref{6E1fkvgPTH}, along with relevant characterizations in terms of computable enumerability.
In \S\ref{OC53m9ZSeG} we show that many types of betting are monotonous and in  
\S\ref{Sp9icqFzvI}, after recalling that Hausdorff dimension is expressible in terms of speed of martingale success, 
we use these facts in order to show that there exists a separable strategy
which succeeds in all casino sequences of effective Hausdorff dimension $<1/2$.
In \S \ref{aRvZodD8FN} we first describe a decomposition of computable martingales into
two single-sided (orthogonal) martingales, which
provides the positive answer to questions \eqref{ID1kUpxqvY} and \eqref{ZrEv5hZl2} 
stated in the introductory discussion, for the case of computable strategies.
We then give a detailed argument establishing a strong negative answer of the same questions
for the special case of a
single separable strategy.
This argument is then used in a modular way in
\S\ref{BBcmfYbEE} in order to obtain a proof of the full result, with respect to every possible
strategy that is expressible as a mixture of a computable family of separable martingales.
Finally in \S\ref{meVVbZop96} we generalize this result to the more general class of decidably-sided strategies.
Concluding remarks and a critical discussion of our results, along with open problems and directions for future investigations 
are given in \S\ref{7FjYtfZZf}.

\section{Monotonous betting strategies and their mixtures}\label{a9WlWLzsyd}
Betting strategies are formalized by 
martingales\footnote{This is a mathematical notion and different than the martingale 
betting system that we discussed in \S \ref{sQtcO25fky}. In mathematics, martingales 
were introduced by \cite{Levymartin} and extended
by \cite{villemartin} who also gave them this name. See \cite{doobmartinAMS} for a classic and brief exposition of martingales
in probability.} 
which are used in order to express the capital after each betting stage and each casino outcome.
Formally, a {\em martingale} in the space of binary outcomes 
is a function $M:\twomel\to\mathbb{R}^{\geq 0}$ 
from binary strings to the non-negative real numbers, 
with the property that for all $\sigma\in 2^{<\omega}$:
\begin{equation}\label{TwlwwLBPiK}
2\cdot M(\sigma)=M(\sigma\ast 0)+M(\sigma\ast 1).
\end{equation}
If the equality
is replaced with `$\geq$' then $M$ is called a {\em supermartingale}.\footnote{Supermartingales 
can be viewed as `leaky martingales' which
may potentially lose some capital at each betting position.}
 Probabilistically, such a function $M$ can be
seen as a martingale stochastic process $(Y_s)$ {\em relative} to the 
underlying fair coin-tossing stochastic process
$(I_s)$, where $I_s$ is the outcome of the $s$th coin-toss which can be 0 or 1 
with equal probability 1/2, so that:
\begin{enumerate}[\hspace{0.5cm}(a)]
\item $Y_s$ is measurable in (\ie determined by the outcome of) $I_i, i\leq s$;
\item by \eqref{TwlwwLBPiK} the expectation of $Y_{s+1}$ given $I_i, i\leq s$ equals $Y_s$.
\end{enumerate}
The definition of a martingale in \eqref{TwlwwLBPiK}  as a deterministic function
relates to its probabilistic interpretation in the same way that a random variable can be seen as
a deterministic function from a probability space to $\mathbb{R}$.
If we view martingales $M$ as deterministic functions satisfying
\eqref{TwlwwLBPiK}, and if we {\em require them to be non-negative}, 
then they provide a formalisation of a betting strategy  
 on an infinite coin-tossing game, where 
 $M(\sigma)$ denotes the capital at position $\sigma$.
Non-negativity expresses the requirement that
the player cannot borrow money after a bankruptcy, \ie upon the loss of 
all the capital, the game ends.
 Informally a bet consists of the favorable outcome (0 or 1) and the
 {\em wager}, which is the amount that will be won or lost after the outcome is revealed. 
 For convenience, we combine both of these parameters into the definition of the wager, whose sign
 reveals the favorable outcome:
\begin{equation}\label{M6wGjfXcnq}
w_M(\sigma):=M(\sigma\ast 1)-M(\sigma)
\hspace{0.5cm}\textrm{is the {\em wager at state} $\sigma$.}
\end{equation}
Hence if $w_M(\sigma)>0$ then the favorable outcome in this bet is 1; if
$w_M(\sigma)<0$ then the favorable outcome is 0. 
If $w_M(\sigma)=0$ then no bet is placed at position $\sigma$.
Wagers are usually called {\em martingale differences} in probability texts.
We say that $M$ {\em succeeds} on $X$ if 
\begin{equation}\label{64BkaTt1lE}
\limsup_n M(X\restr_n)=\infty.
\end{equation}
In order to consider realistic strategies it is natural to require that the martingales 
are definable or have some effectivity properties, for example that they are 
{\em computable} or {\em enumerable} by  a Turing machine. 
\begin{defi}[Computably enumerability of martingales]\label{pG43SuFmn}
A martingale  $M:\twomel\to\mathbb{R}^+$ is called {\em l.c.e.} if
$M(\sigma)$  can be approximated by an increasing computable sequence of rationals,
uniformly in $\sigma$.
Moreover we say that $M$ is {\em strongly l.c.e.} if is it \lce and the wagers $w_M(\sigma)$
 can be approximated by strictly monotone computable sequence of rationals,
uniformly in $\sigma$.\footnote{The reader may verify the following redundancy in the second clause of the definition: 
if the wagers $w_M(\sigma)$
 can be approximated by an increasing computable sequence of rationals,
uniformly in $\sigma$ and the initial capital $M(\lambda)$, where $\lambda$ is the empty string, 
is \lce (\ie has a computable increasing rational approximation)
then necessarily $M$  is a \lce martingale.}
\end{defi}
Computable and \lce martingales can be used as a foundation of algorithmic information theory,
see \citep[\S 13.2]{rodenisbook},  \citep{Li.Vitanyi:93} or \citep{martin_hist_rand}. A binary sequence to be
algorithmically random if no  \lce martingale $M$ {\em succeeds} on it in the sense of \eqref{64BkaTt1lE}.

{\bf Martingales and algorithmic randomness.}
It turns out that any \lce martingale $M$ can be transformed into a \lce martingale $N$ such that 
$\lim_n N(X\restr_n)=\infty$ for each $X$ such that \eqref{64BkaTt1lE} holds.
The betting strategies (or {\em unpredictability}) approach to algorithmic randomness 
is equivalent to the other two traditional approaches, 
namely the {\em incompressibility} approach (through Kolmogorov complexity)
and the {\em measure-theoretic} approach (through statistical tests). So a  real $X$ is \ml random 
(\ie roughly speaking, avoids all effective null sets) 
if and only if there exists some constant $c$ for which  $\forall n\ K(X\restr_n)>n-c$, where $K$ denotes the
\pf Kolmogorov complexity of $X$, if and only if no \lce (super)martingale succeeds on $X$.
The equivalence of the martingale approach with the other two, established in \cite{Schnorr:71}, 
is based on the {\em Kolmogorov inequality} (sometimes known 
as Ville's inequality as it appears in \cite{villemartin}) 
which  will be used in  \S \ref{aRvZodD8FN}, \S\ref{BBcmfYbEE} and says that
if $M$ is a martingale then:
\begin{equation}\label{xPWvKAIctR}
\sum_{\sigma\in S} 2^{-|\sigma|}\cdot M(\sigma)\leq M(\lambda)
\hspace{0.5cm}
\textrm{for each \pf set of strings $S$}
\end{equation}
where $\lambda$ denotes the empty string. 
If $S$ covers the whole space then equality holds, giving a version of the familiar fairness condition described by the martingale property.

\subsection{Monotonous strategies as martingales}\label{JbvJlths9B}
We formally define strategies that bet in a monotonous fashion, in terms of martingales.
\begin{defi}[Single-sided strategies]\label{VPTxEMxdFT}
A martingale $M$ is 0-sided if $M(\sigma\ast 0)\geq M(\sigma\ast 1)$ for all $\sigma$;
it is 1-sided if $M(\sigma\ast 1)\geq M(\sigma\ast 0)$ for all $\sigma$. We say that $M$ is
single-sided if it is either 0-sided or 1-sided. 
We say that $M$ is strictly single-sided if it is single sided and 
$M(\sigma\ast 0)\neq M(\sigma\ast 1)$ for all $\sigma$.
\end{defi}
%
%
%
A {\em prediction function} $f$ is a function from $\twomel$ to $\{0,1\}$. We say that $i<|\sigma|$
is a correct $f$-guess with respect to $\sigma$ if $f(\sigma\restr_i)=\sigma(i)$; otherwise we say that
$i$ is a false $f$-guess with respect to $\sigma$.
According to the two components (a), (b) of a betting strategy discussed in
\S\ref{VYuXf7zTQg}, a prediction function
can be seen as (b). 
%
\begin{defi}[Decidably-sided strategies]\label{C7kLPhxcBb}
Given a prediction function $f$,
a martingale $M$ is 
$f$-sided if any bias on the outcomes is decided by $f$; formally, if 
for all $\sigma$, $i$ if $M(\sigma\ast i)> M(\sigma)$
then $f(\sigma)=i$,  and similarly if 
$M(\sigma\ast i)< M(\sigma)$
then $f(\sigma)=1-i$.
A martingale $M$
is decidably-sided if
its favorable outcome is decidable, in the sense that
it is $f$-sided for a (total) computable prediction function $f$.
\end{defi}
Decidably-sided strategies can be seen as single-sided 
betting strategies modulo some effective re-naming of 0s and 1s. 
Another restricted strategy that we discussed
informally in \eqref{ZrEv5hZl2} is when the bets on 0s and the bets on 1s are based on separated
capital pools, with any winnings being returned to them, and losses taken from them, in a disjoint fashion.
These strategies are modeled by {\em separable martingales} which are martingales
that can be written as the sum of a 0-sided and a 1-sided martingale.

{\bf Facts and non-facts about monotonous betting.}
It is clear that $f$-sided and separable martingales are closed under 
(countable, subject to convergence of initial capitals) addition 
and multiplication by a constant.
Many of the facts about \lce martingales in the
beginning of \S\ref{a9WlWLzsyd} also hold
for the restricted martingales introduced above, by similar proofs.
Assuming that $f$ is computable:
\begin{equation*}
\parbox{14cm}{if $M$ is an $f$-sided martingale then there is an $f$-sided martingale $N$ with
$\lim_n N(X\restr_n)=\infty$ for all $X$ on which $M$ succeeds, in the sense of \eqref{64BkaTt1lE}. 
The same holds even if we replace `$f$-sided' with `separable' or `decidably-sided', 
or  qualify $M,N$ as \lce or computable.}
\end{equation*}
The proof is a simple adaptation of the standard argument, the so-called {\em savings trick}, 
(see  \citep[Proposition 6.3.8]{rodenisbook}).
Since algorithmic randomness can be defined with respect to a class of
effective (super)martingales, 
each of the restricted martingale notions that
we have discussed, \lce or computable, corresponds to a randomness notion.
Separating these notions is often a matter of adapting existing methods on this topic,
such as \citep[Chapter 7]{Ottobook}.
\begin{thm}[Partial computable strategies vs single-sided \lce strategies]\label{4N1PZvDiv}
There exists $X$ such that a 0-sided \lce martingale succeeds on $X$ and no
partial computable (super)martingale succeeds on $X$.
\end{thm}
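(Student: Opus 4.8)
The plan is to construct $X$ together with a $0$-sided \lce martingale $L$ succeeding on $X$, while diagonalizing against all partial computable supermartingales. First I would fix an effective enumeration $(M_e)_{e\in\omega}$ of all partial computable supermartingales (with rational values and rational wagers, say), and aim to build $X$ so that each $M_e$ with $M_e(\lambda)$ defined fails to succeed on $X$: $\limsup_n M_e(X\restr_n)<\infty$. The key asymmetry to exploit is that $L$ only needs to be \emph{enumerable from below} and $0$-sided, so $L$ can afford to ``wait and see'' — it will always bet some small fixed fraction of its capital on the next bit being $0$, with the fraction chosen small enough that even long stretches of $1$s do not bankrupt it, but large enough that the $0$s we are forced to place (for diagonalization) accumulate capital for $L$. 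Meanwhile each $M_e$, being required to be \emph{total and computable on the relevant initial segments} and to satisfy the martingale inequality exactly, is far more rigid.

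The construction is a finite-injury priority argument on a binary tree of strings. At each stage we have a current finite string $\sigma$ which is an initial segment of $X$. For the requirement $R_e$ (``$M_e$ does not succeed on $X$''), we use the following module: wait until $M_e(\sigma)$ converges; once it does, look at the wager $w_{M_e}(\sigma)$. If $w_{M_e}(\sigma)\le 0$, i.e. $M_e$ bets on $0$ (or does not bet), we are happy to extend by $0$ — this is exactly what $L$ wants too. If $w_{M_e}(\sigma)>0$, i.e. $M_e$ bets on $1$, then we extend by $0$ anyway: $M_e$ loses a definite fraction of its capital while $L$ gains. The crucial quantitative point is that, by the Kolmogorov/Ville inequality \eqref{xPWvKAIctR} applied to the prefix-free set of strings where $R_e$ has just acted, a supermartingale cannot afford to bet a large fraction on $1$ too many times if $X$ keeps coming up $0$; so along the branch we build by always (eventually) choosing $0$, each $M_e$ is bounded. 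The one subtlety is that we cannot make $X$ eventually all $0$s, or every $M_e$ would trivially be bounded but $X$ would be computable — we don't actually need $X$ noncomputable, we just need the single-sided \lce martingale $L$ to succeed, which it does precisely because we place infinitely many $0$s. So in fact $X$ can simply be chosen so that each requirement, in turn, gets to wait for its $M_e$ to converge and then we append a $0$; if some $M_e(\sigma)$ never converges we just append $0$s forever past that point, and $L$ still wins. This means the construction is essentially: process requirements $R_0,R_1,\dots$ in order, each contributing one (or boundedly many) $0$s to $X$, with $L$ betting a fixed small fraction $\epsilon$ on $0$ at every position.

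The main obstacle — and the reason this is not completely trivial — is ensuring that $L$ genuinely \emph{succeeds} (capital $\to\infty$), not merely stays bounded away from $0$, even though between diagonalization actions the string $X$ might contain long runs of $1$s inserted for reasons outside $L$'s control. I would handle this by being careful about \emph{who controls the bits of $X$}: in this construction, \emph{we} control all bits, and we only ever append $0$s, so $L$ with any fixed fraction $\epsilon\in(0,1)$ on $0$ has $L(X\restr_n)=L(\lambda)\cdot\prod(1+\epsilon)$ over the $0$-steps, hence $L(X\restr_n)=(1+\epsilon)^n\cdot L(\lambda)\to\infty$. The only real work is then (i) verifying that each partial computable $M_e$ with $M_e(\lambda)\!\downarrow$ is bounded on $X$, which follows because on an all-eventually-$0$ sequence a supermartingale's value along the sequence is nonincreasing from the point its $1$-wagers can no longer be collected — more precisely, once past the finite prefix where other requirements act, every extension is by $0$, and $M_e(\sigma\ast 0)\le 2M_e(\sigma)-M_e(\sigma\ast 1)\le 2M_e(\sigma)$, but combined with non-negativity and the supermartingale inequality the sequence $M_e(X\restr_n)$ is eventually bounded by $2M_e(X\restr_{n_0})$ where $n_0$ is where $R_e$ last acts; and (ii) checking that $L$ is \lce and $0$-sided, which is immediate since $L$ is in fact computable with the constant rule ``bet fraction $\epsilon$ on $0$.'' The presentation should make clear that the genuinely \lce (non-computable) strength of $0$-sided strategies is not even needed here — computability of $L$ suffices — and that the content of the theorem is really the \emph{robustness of single-sided success against all partial computable opponents}, delivered by the trivial but unbeatable ``always bet a little on $0$'' strategy once $X$ is arranged to be $0$-heavy.
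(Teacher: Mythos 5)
Your construction cannot work, because it makes $X$ computable (eventually all $0$s), and that is incompatible with the very statement being proved: if $X$ is computable, the total computable martingale that simply doubles its capital on the known next bit of $X$ succeeds on $X$, so the clause ``no partial computable (super)martingale succeeds on $X$'' fails. Indeed your own witness $L$ already refutes you — you observe that $L$ is \emph{computable} (``bet a fixed fraction $\epsilon$ on $0$'') and succeeds on $X$, but a computable martingale is in particular a partial computable one, so you have simultaneously built a sequence on which some partial computable martingale succeeds and claimed none does. The error originates in the diagonalization module: when $M_e$ bets on $0$ you extend by $0$, which is exactly the move that lets $M_e$ win. To defeat a partial computable opponent you must extend by a bit on which its capital does not increase, and hence you are \emph{forced} to place $1$s into $X$ whenever some active opponent bets on $0$. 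Your verification step (i) is also false as stated: along an all-$0$ tail a supermartingale that keeps betting on $0$ grows without bound, so $M_e(X\restr_n)$ is certainly not ``eventually bounded by $2M_e(X\restr_{n_0})$.''

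The genuine content of the theorem, which your plan avoids, is precisely the tension you dismissed: $X$ must be chosen so that every partial computable (super)martingale is non-increasing along it from some point on (forcing many $1$s at positions where opponents favour $0$), while a $0$-sided strategy — which can profit \emph{only} from $0$s — still succeeds. This is impossible for a computable $0$-sided strategy (it would itself be a computable martingale succeeding on $X$), so the left-c.e.\ character of the friendly martingale is essential, not optional. The intended argument, as the paper indicates, adapts the separation techniques of Nies's book \S 7.4: build $X$ by a waiting/priority construction that always moves to the non-increasing side for the finitely many opponents that have converged, and design the $0$-sided \lce martingale so that it uses its left-c.e.\ approximation (to the construction, i.e.\ to which opponents have converged and what they wager) to identify infinitely many positions that are guaranteed to be $0$ and to concentrate its bets on $0$ only there, betting nothing (or negligibly) at positions where the construction may be forced to answer $1$. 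Reworking your proof means replacing both halves of your module and building this ``wait for convergence, then bet on a reserved $0$'' mechanism, which is where all the work lies.
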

The proof of Theorem \ref{4N1PZvDiv} is a straightforward adaptation of the arguments in 
\citep[\S 7.4]{Ottobook} and is thus left to the reader as an exercise. Our results 
in \S \ref{kB1LksuUZG} and \S \ref{BBcmfYbEE} 
can also be viewed as separations of randomness notions,
but their proofs require a novel argument. 
On the other hand, certain caution is needed as some basic facts
about (super)martingales and their effective versions, no-longer hold in the presence of
monotonousness. It is crucial to observe that {\em the difference of two single-sided martingales is not
always single-sided}, even if it is positive and even if they both favor the same outcome.
This is the reason why the two notions (i),(ii) of computably enumerable monotonous strategies
discussed in \S\ref{VYuXf7zTQg} are quite different. Another issue is that
under monotonousness, supermartingales are not interchangeable with martingales.
Classically, every supermartingale is bounded above by a martingale, and this is also true
for computable and \lce supermartingales (the \lce case is not straightforward; see 
\citep[\S 6.3]{rodenisbook}). Although this fact is also true
for single-sided supermartingales in the non-effective and computable cases, it can be shown to 
fail for \lce single-sided supermartingales.

\subsection{Mixtures of monotonous strategies, enumerability and approximations}\label{6E1fkvgPTH}
By the {\em mixture} of a finite or countable family $(M_i)$ of non-negative martingales we mean the sum
$M=\sum_i M_i$. In this terminology, there are two implicit assumptions:
 (a) the sum is bounded, in the sense that the total initial capital of the $M_i$ is finite:
 $\sum_i M_i(\lambda)<\infty$; (b) since we typically deal with effective or constructive strategies, 
we assume that $(M_i)$ has the same complexity, for example it is uniformly computable.
Mixtures of computable families of martingales allow for more powerful betting strategies since,
although $(M_i)$ is uniformly computable, the values of the 
capital $M=\sum_i M_i$  can only be approximated by a computable increasing sequence, uniformly in
the argument. Martingales $M$ with the latter approximation property are  
 \lce according to Definition \ref{pG43SuFmn} and are conceptually interesting since, 
 although the current capital $M(\sigma)$ and wager $w_M(\sigma)$ are measurable, \ie determined,
from the state $\sigma$, a constructive (computable) observer only has access to a approximations
of them. Hence even the favorable outcome may not be computable, while for strongly \lce martingales  
a computable observer has access to 
the favorable outcome as well as
a lower bound converging to the absolute value of the current wager.

{\bf Mixtures, enumerable strategies and optimality.}
The mixture of a computable family of martingales is a \lce martingale.
Moreover the mixture of a computable family of $f$-sided martingales is an
$f$-sided strongly \lce martingale.
In the following, we point out that the converse of these facts is true: 
every \lce martingale can be written as
the mixture of a computable family of martingales;
similarly, every strongly \lce strictly $f$-sided \lce martingale can be written as
the mixture of a computable family of strictly $f$-sided martingales.
These facts provide useful approximations for \lce monotonous martingales,  which 
will be used in \S\ref{aRvZodD8FN}, \S\ref{BBcmfYbEE}.
The reason that such respresentations are needed in the proofs that involve diagonalization,
is the somewhat surprising lack of universality in the class of \lce martingales.
By \citep{Downey.Griffiths.ea:04} there exists no effective enumeration of all \lce martingales.
This is usually an inconvenience in arguments which involve diagonalisation  against all
\lce martingales, and  a reason why
it is  often convenient to work with 
supermartingales (recall the discussion in \S\ref{JbvJlths9B} 
that effective martingales and supermartingales are exchangeable).
Since there exists a uniform enumeration 
of all \lce supermartingales, 
there exists a \lce supermartingale $M$ which is {\em optimal}, 
in the sense that any other \lce supermartingale is $\bigo{M}$, \ie multiplicatively dominated by $M$.
On the other hand, by \citep{Downey.Griffiths.ea:04} there is no optimal left-c.e.\  martingale $M$, \ie such that
any other left-c.e.\ martingale is $\bigo{M}$.
Unfortunately, {\em our arguments are specific to martingales} and do not apply to supermartingales.
This, along with the fact discussed in  
so we need to deal with the fact that, as discussed in  \S\ref{JbvJlths9B}, 
supermartingales are not exchangeable with martingales under monotonousness, 
means that we cannot use universality in our arguments.
\begin{lem}[Left-c.e.\ martingales as effective mixtures]\label{8tlYkfBkSB}
A martingale is \lce if and only if it can be written as the sum of a uniformly computable
sequence of  martingales.
\end{lem}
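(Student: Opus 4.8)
The plan is to prove both directions of the equivalence, with the forward direction (every \lce martingale is such a mixture) being the substantive one, since the converse is essentially the observation that a finite or convergent sum of martingales is a martingale, and that a uniform approximation to each summand plus the bounded-initial-capital assumption yields a uniform increasing rational approximation to the sum.

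For the forward direction, suppose $M$ is \lce, so there is a computable array of rationals $q_{\sigma,s}$ with $q_{\sigma,s}\nearrow M(\sigma)$ in $s$, uniformly in $\sigma$. The naive idea of setting $M_s:=$ ``the stage-$s$ increment'' fails because the pointwise differences $q_{\sigma,s+1}-q_{\sigma,s}$ need not satisfy the martingale equation \eqref{TwlwwLBPiK}. So first I would massage the approximation: using that $M$ itself is a martingale, one can arrange a new computable approximation $(m_{\sigma,s})_s$ to $M(\sigma)$ such that for every fixed $s$ the function $\sigma\mapsto m_{\sigma,s}$ is itself a (nonnegative, rational-valued, computable) martingale, still increasing in $s$ with limit $M(\sigma)$. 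This is where the care is needed: to keep the stage-$s$ snapshot a genuine martingale one propagates the approximation top-down along the tree --- fix $m_{\lambda,s}$ to be a rational below $M(\lambda)$, and then whenever one wants to raise the value at some node $\sigma$, raise it together with its sibling so that the averaging identity at the parent is preserved, pushing the adjustment further up the tree if necessary; nonnegativity is maintained because all target values $M(\tau)$ are nonnegative and we only ever approach them from below. One must verify this can be done effectively and that the resulting $m_{\sigma,s}$ still converges to $M(\sigma)$ for every $\sigma$ (convergence at $\lambda$ forces, via the martingale identity and nonnegativity, convergence everywhere, by the Kolmogorov/Ville inequality \eqref{xPWvKAIctR} applied along levels).

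Once we have such a sequence of martingale snapshots $N_s$ (with $N_s(\sigma)=m_{\sigma,s}$), set $M_0:=N_0$ and $M_{s+1}:=N_{s+1}-N_s$ for $s\geq 0$. Each $M_{s+1}$ is a difference of martingales, hence satisfies \eqref{TwlwwLBPiK}; it is nonnegative because the approximation is increasing at every node simultaneously (this is exactly what the top-down construction buys us --- if raising node $\sigma$ forced lowering some other node we would lose this, so the construction must only ever increase values); and it is rational-valued and computable uniformly in $s$. The partial sums telescope: $\sum_{s\leq t} M_s=N_t$, so $\sum_s M_s(\sigma)=\lim_t N_t(\sigma)=M(\sigma)$ pointwise, and the total initial capital $\sum_s M_s(\lambda)=M(\lambda)<\infty$ is bounded, so the mixture is well defined and equals $M$.

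The main obstacle is the top-down re-approximation step: reconciling the requirement that each stage-$s$ snapshot be an exact martingale with the requirement that the approximation be monotone increasing at \emph{every} node, all while staying computable and converging to the right limit. A clean way to handle this is to process the tree level by level: at stage $s$ decide values only on strings of length $\leq s$, choosing at each level rationals that (i) lie below the true values, (ii) dominate the previous stage's values, and (iii) satisfy \eqref{TwlwwLBPiK} relative to the already-fixed parent value --- the latter is possible because, the parent value being strictly below $M$ of the parent, the true child values leave enough slack to pick rationals summing to twice the (slightly smaller) rational parent value while each exceeding the previous stage's child value; one then shows by a routine limiting argument that these converge to $M$ everywhere. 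I expect the verification of convergence and nonnegativity to be short once the bookkeeping of this level-by-level scheme is set up correctly.
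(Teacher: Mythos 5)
Your overall route is the same as the paper's: build a sequence of computable martingales increasing pointwise to $M$ from below and take successive differences as the summands, the crux being the nodewise choice of children values that dominate the previous stage's values, stay below (a late enough approximation of) $M$, and sum to twice the already-chosen parent value; your slack argument for why such a choice exists and is effective, and your convergence argument (convergence at $\lambda$ plus domination by $M$ forces convergence at every node via \eqref{TwlwwLBPiK}), are exactly those of the paper. The genuine gap is in your concrete ``level-by-level'' implementation: at stage $s$ you decide values only on strings of length at most $s$, so each snapshot $N_s$ --- and hence each increment $N_{s+1}-N_s$ --- is a partial function, whereas the lemma requires each summand to be a total martingale on $\twomel$. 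This is not mere bookkeeping, because the obvious repairs fail. If you extend a snapshot below its frontier by not betting, i.e.\ $N_s(\tau):=N_s(\tau\restr_s)$, the extended value can exceed $M(\tau)$ (take $N_s(\sigma)=0.9$, $M(\sigma)=1$, $M(\sigma\ast 0)=1.5$, $M(\sigma\ast 1)=0.5$), after which monotone convergence upward to $M(\tau)$, and hence nonnegativity of the later increments at $\tau$, is impossible. If instead you extend each increment by not betting, the sum no longer telescopes at nodes below the frontier (the summands of small index contribute values tied to ancestors of $\tau$ rather than to $\tau$), so the mixture represents some martingale other than $M$.

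The fix is to drop the truncation: for each $s$, define $N_s(\sigma)$ for all $\sigma$ by recursion on $\sigma$, at each node searching for a stage of the given left-c.e.\ approximation at which the two children approximations sum to at least twice the chosen parent value --- such a stage exists because the parent value is kept strictly below $M$ of the parent --- and then splitting as you describe, keeping the new values above the previous stage's values. Computability of each snapshot only requires that this search terminate at every node, not that each stage perform finitely much work. This is precisely how the paper's proof proceeds, except that it constructs the increments $N_k$ directly (each total, via this nodewise search, as in \eqref{Yxju2tReUU}) and verifies that the partial sums stay strictly below $M$ and converge to it; with that modification your argument goes through.
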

\begin{proof}
If $(N_i)$ is a uniformly computable sequence of martingales 
and $\sum_i N_i(\lambda)<\infty$ then it is well-known that
$\sigma\mapsto\sum_i N_i(\sigma)$ is a \lce martingale. For the converse, assume that 
$M$ is a \lce martingale and let $(M_s)$ be a \lce approximation to it so that
$M_{s+1}(\sigma)>M_{s}(\sigma)$ for all $s,\sigma$.
We define a family $(N_i)$ of martingales as follows.
Inductively assume that $N_i, i<k$ have been defined, they are martingales, and
\begin{equation}\label{YLFqQkQXED}
S_k(\sigma)< M(\sigma) 
\hspace{0.5cm} \textrm{for all $\sigma$, where $S_k:=\sum_{i<k} N_i$.}
\end{equation}
Consider a stage $s_0$ such that $M_{s_0}(\lambda)>\sum_{i<k} N_i(\lambda)$
and let $N_k(\lambda)=M_{s_0}(\lambda)-S_k (\lambda)$.
Then for each $\sigma$ suppose inductively that we have defined $N_k(\sigma)$ in 
such a way that $N_k(\sigma) +S_k (\sigma)\leq M_{t}(\sigma)$ for some
stage $t$. Since $M$ is a martingale,
this means that there exists some larger stage $s$ such that:
\begin{equation}\label{Yxju2tReUU}
M_s(\sigma\ast 0)+M_s(\sigma\ast 1)\geq
2N_k(\sigma) +2S_k (\sigma)=
2N_k(\sigma) + (S_k(\sigma\ast 0)+S_k(\sigma\ast 1)).
\end{equation}
Then we let $N_k(\sigma\ast i), i=\{0,1\}$ be two non-negative rationals such that:
\begin{enumerate}[\hspace{0.5cm}(a)]
\item $N_k(\sigma\ast 0)+N_k(\sigma\ast 1)=2N_k(\sigma)$;
\item $N_k(\sigma\ast i)+S_k(\sigma\ast i)\leq M_s(\sigma\ast i)$ for each $ i=\{0,1\}$.
\end{enumerate}
This concludes the inductive definition of $N_k$ 
and also verifies the property \eqref{YLFqQkQXED} for $k+1$ in place of $k$.
Note that the totality of each $N_i$ is guaranteed by the fact that $M$ is a martingale.
It remains to show that
\begin{equation}\label{TwatRWQaTU}
\lim_k S_k(\sigma)= M(\sigma)
\hspace{0.5cm}\textrm{for each $\sigma$.}
\end{equation}
By the definition of $N_i(\lambda)$, it follows that 
\eqref{TwatRWQaTU} holds for $\sigma=\lambda$. Assuming \eqref{TwatRWQaTU} for $\sigma$,
we show that it holds for $\sigma\ast i, i\in \{0,1\}$.
We have
\begin{equation}\label{YRM8JDJ42h}
M(\sigma\ast 0)+M(\sigma\ast 1)-S_k(\sigma\ast 0)-S_k(\sigma\ast 1)=
2M(\sigma)-2S_k(\sigma)=2(M(\sigma)-S_k(\sigma)),
\end{equation}
so by \eqref{TwatRWQaTU} we have:
$\lim_k S_k(\sigma\ast 0) + \lim_k S_k(\sigma\ast 1)=
M(\sigma\ast 0)+M(\sigma\ast 1)$.
By \eqref{YLFqQkQXED} applied to $\sigma\ast 0$ and $\sigma\ast 1$
we get $\lim_k S_k(\sigma\ast i)=M(\sigma\ast i)$ for $i\in\{0,1\}$, as required.
This concludes the inductive proof of \eqref{TwatRWQaTU}.
\end{proof}
%
{\bf Mixtures, monotonous betting and intermediate bets.}
Recall the two ways (i), (ii) that monotonous betting can be
considered for mixtures of strategies. We will show that for
mixtures of computable families of monotonous strategies,
these two formulations are essentially equivalent to
the two notions of computable enumerability of martingales in Definition  \ref{pG43SuFmn}.
The difference between (i) and (ii) is clear if we view
a mixture $S$ at a state $\sigma$ 
as  an infinite countable stack of bets 
that are being placed on the initial segments of $\sigma$. 
The crucial property of effective single-sided strategies $S$ under (i), is that they are
effectively approximated by single-sided strategies $(S_i)$ such that for each $n<m$,
{\em the intermediate bets $S_m-S_n$ are also single-sided}. Since in general the
difference of single-sided strategies may not be single-sided, this property
may not be present under clause (ii).
A computable observer
can only access a certain approximation to $S$ at each stage, \ie a certain finite initial segment of
the bets that compose $S$. At later stages the observer has access a more accurate approximation: 
{\em the intermediate bets express the error of the first observation with respect to the current one}.
For an analogue of Lemma \ref{8tlYkfBkSB} in the case
of monotonous \lce martingales (non-uniform case (ii)) 
we require 
strict monotonousness in the sense of Definitions \ref{VPTxEMxdFT} and 
\ref{C7kLPhxcBb}, \ie 
that a non-empty bet is placed at every state.
This requirement is not essential, as Lemma \ref{XXbFLuIodX} shows.
\begin{lem}\label{XXbFLuIodX}
If $f$ is a computable prediction function, then
for each \lce $f$-sided martingale $M$ we can effectively obtain a
 \lce strictly $f$-sided martingale $\hat{M}$ such that for each $X$ with 
$\limsup_s M(X\restr_s)=\infty$ we have $\limsup_s \hat{M}(X\restr_s)=\infty$.
Hence if no strictly $f$-sided \lce martingale succeeds on a real $Y$, then
no $f$-sided \lce martingale succeeds on $Y$.
\end{lem}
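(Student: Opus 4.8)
The plan is to obtain $\hat{M}$ from $M$ by a padding argument: add to $M$ a fixed, everywhere-betting $f$-sided martingale, so that no bet is ever empty, while the extra martingale only contributes capital and hence cannot destroy success. Since the difference of two $f$-sided martingales need not be $f$-sided but the \emph{sum} always is, and since a genuinely $f$-sided perturbation cannot reverse the sign of an existing $f$-guided bet, this suffices.

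Concretely, first I would fix the martingale $N$ defined by $N(\lambda)=1$ and, for every $\sigma$,
\[
N\bigl(\sigma\ast f(\sigma)\bigr)=\tfrac32\,N(\sigma),\qquad N\bigl(\sigma\ast(1-f(\sigma))\bigr)=\tfrac12\,N(\sigma),
\]
that is, the strategy that at each state stakes half of its current capital on the outcome predicted by $f$. Since $f$ is computable, $N$ is a computable martingale; it is strictly positive, it satisfies \eqref{TwlwwLBPiK} by construction, and $w_N(\sigma)$ is nonzero with sign exactly determined by $f(\sigma)$, so $N$ is strictly $f$-sided.

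Next I would set $\hat{M}:=M+N$ and verify three routine points. (1) $\hat{M}$ is \lce: if $(M_s)$ is an increasing rational approximation to $M$ uniform in $\sigma$, then $(M_s+N)$ is such an approximation to $\hat{M}$, with $N(\sigma)$ a rational computable uniformly in $\sigma$; moreover the whole passage $M\mapsto\hat{M}$ is uniform in indices for $M$ and for $f$, as the statement requires. (2) $\hat{M}$ is strictly $f$-sided: for every $\sigma$, $w_{\hat{M}}(\sigma)=w_M(\sigma)+w_N(\sigma)$; since $M$ is $f$-sided, $w_M(\sigma)$ is either $0$ or has the sign dictated by $f(\sigma)$, whereas $w_N(\sigma)$ has exactly that sign and is nonzero, so the two terms cannot cancel, and $w_{\hat{M}}(\sigma)$ is nonzero with the sign prescribed by $f(\sigma)$; hence $\hat{M}$ is $f$-sided and places a nonempty bet at every state. (3) $\hat{M}$ succeeds wherever $M$ does: $N\geq 0$ gives $\hat{M}(X\restr_s)\geq M(X\restr_s)$ for all $s$, hence $\limsup_s\hat{M}(X\restr_s)\geq\limsup_s M(X\restr_s)$. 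The displayed ``Hence'' is then the contrapositive: an $f$-sided \lce martingale succeeding on $Y$ would yield, via this construction, a strictly $f$-sided \lce martingale succeeding on $Y$.

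There is essentially no serious obstacle; the construction is a direct adaptation of the savings-type padding tricks already invoked in \S\ref{JbvJlths9B}. The only point that needs a moment's care is the no-cancellation argument in step (2): one must use the fact that $f$-sidedness pins down the sign of every nonzero wager of $M$ (at states where $M$ bets nothing, $f(\sigma)$ is unconstrained, and there the perturbation $N$ simply creates a fresh $f$-consistent bet), so that adding $N$ can only reinforce, never reverse, the bets already present in $M$.
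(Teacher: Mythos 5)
Your construction is exactly the paper's own proof: the paper also takes $N$ to be the computable martingale with $N(\lambda)=1$ that stakes half its capital on $f(\sigma)$ at every state, sets $\hat{M}=M+N$, and observes that the sum of an $f$-sided and a strictly $f$-sided martingale is strictly $f$-sided while $N\geq 0$ preserves success. Your write-up merely spells out the routine l.c.e.\ and uniformity checks that the paper leaves implicit, so the two arguments coincide.
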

\begin{proof}
Let $N$ be the computable martingale which starts with $N(\lambda)=1$ and
at each $\sigma$, it bets half of $N(\sigma)$ on $f(\sigma)$. Define
$\hat{M}=M+N$ so $\hat{M}$ is clearly $f$-sided and succeeds on every real that $M$ does.
Since $N(\sigma)>0$ 
for all $\sigma$ it follows that $N$ is strictly $f$-sided. Then 
$\hat{M}(\sigma\ast f(\sigma))-\hat{M}(\sigma\ast (1-f(\sigma)))$ equals
\[
(M(\sigma\ast f(\sigma))-M(\sigma\ast (1-f(\sigma)))+
(N(\sigma\ast f(\sigma))-N(\sigma\ast (1-f(\sigma)))
\]
which is larger than 0 as required, since $M$ is $f$-sided and $N$ is strictly $f$-sided.
\end{proof}
\begin{lem}[Monotonous left-c.e.\ martingales as mixtures]\label{XSQ74P8ao}
For every computable prediction function $f$ and every \lce strictly
$f$-sided martingale $M$, 
there exists a uniformly computable sequence $(N_i)$ of martingales such that 
the partial sums $S_n=\sum_{i<n} N_i$ are $f$-sided  and
converge to $M$. 
\end{lem}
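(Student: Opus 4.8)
The plan is to run the construction from the proof of Lemma \ref{8tlYkfBkSB} essentially unchanged, adding one extra constraint at the moment when the values $N_k(\sigma\ast 0),N_k(\sigma\ast 1)$ are selected, and carrying the extra inductive invariant that every partial sum $S_k=\sum_{i<k}N_i$ is $f$-sided (alongside the invariant $S_k(\sigma)<M(\sigma)$ for all $\sigma$ already present in that proof). So I would fix a strictly increasing computable rational approximation $(M_s)$ to $M$, set $N_k(\lambda)=M_{s_0}(\lambda)-S_k(\lambda)$ for the first stage $s_0$ with $M_{s_0}(\lambda)>S_k(\lambda)$ exactly as before, and then recurse on $|\sigma|$. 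Having fixed $N_k(\sigma)$, I write $j=f(\sigma)$, $j'=1-j$, $a=S_k(\sigma\ast j)$, $a'=S_k(\sigma\ast j')$, $c=2N_k(\sigma)$, and look for non-negative rationals $x=N_k(\sigma\ast j)$, $y=N_k(\sigma\ast j')$ with $x+y=c$, with $x+a\le M_s(\sigma\ast j)$ and $y+a'\le M_s(\sigma\ast j')$ for some stage $s$, and --- the new requirement --- with $x+a\ge y+a'$; this last inequality says precisely that $S_{k+1}=S_k+N_k$ is $f$-sided at $\sigma$.

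The heart of the argument, and the step I expect to be the main obstacle, is verifying that such $x,y$ always exist. From the invariants I would extract $a\ge a'\ge 0$, $a+a'=2S_k(\sigma)$, $c<2\bigl(M(\sigma)-S_k(\sigma)\bigr)$, and $M(\sigma\ast i)>S_k(\sigma\ast i)$ for $i\in\{0,1\}$; and since $M$ is $f$-sided, $M(\sigma\ast j)\ge M(\sigma)$. Putting $y=c-x$, the new inequality reads $x\ge\tfrac12\bigl(c-(a-a')\bigr)$, while non-negativity of $x,y$ and the two upper bounds (achievable for a suitable $s$ since $M_s\to M$) amount to $x<M(\sigma\ast j)-a$ and $x>c-\bigl(M(\sigma\ast j')-a'\bigr)$. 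Thus the admissible $x$ form the intersection of $\bigl[\max\{0,\tfrac12(c-(a-a'))\},\,M(\sigma\ast j)-a\bigr)$ with $\bigl(c-(M(\sigma\ast j')-a'),\,c\bigr]$, and a short computation --- using $c<2(M(\sigma)-S_k(\sigma))\le 2(M(\sigma\ast j)-S_k(\sigma))$ together with $a+a'=2S_k(\sigma)$, $a\ge a'$, and $S_k(\sigma\ast i)<M(\sigma\ast i)$ --- shows that every left endpoint of these two intervals lies strictly below every right endpoint. Hence the intersection contains a non-empty open interval, so it contains a rational. This both shows the three conditions are jointly satisfiable and makes $N_k$ computable uniformly in $k$: once existence is known one simply searches, dovetailing over rational $x$ and stages $s$, for a pair witnessing the conditions, which is decidable because $(M_s)$ is computable.

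Finally I would check that the invariants persist and that the output is as required. The new inequality gives $f$-sidedness of $S_{k+1}$ at $\sigma$, and $S_{k+1}(\sigma\ast i)\le M_s(\sigma\ast i)<M(\sigma\ast i)$ preserves strict domination and hands the recursion a witnessing stage for the next level. Since nothing else has changed --- $N_k(\lambda)$ is chosen as in Lemma \ref{8tlYkfBkSB}, each $N_k(\sigma\ast i)$ is a non-negative rational, and $S_{k+1}$ is again a martingale --- the convergence argument of Lemma \ref{8tlYkfBkSB} applies verbatim: $\lim_k S_k(\lambda)=M(\lambda)$ because the root stages $s_0$ tend to infinity, and this propagates to $\sigma\ast 0,\sigma\ast 1$ through $S_k(\sigma\ast 0)+S_k(\sigma\ast 1)=2S_k(\sigma)$ and $S_k(\sigma\ast i)\le M(\sigma\ast i)$. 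That yields the desired uniformly computable sequence $(N_i)$ with $f$-sided partial sums converging to $M$. (Only $f$-sidedness of $M$ is actually needed above, not strictness; strictness is assumed because, by Lemma \ref{XXbFLuIodX}, it costs nothing and is what makes the statement non-vacuous in the intended applications.)
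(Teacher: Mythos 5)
Your proposal is correct and takes essentially the same route as the paper: the paper's own proof also reruns the construction of Lemma \ref{8tlYkfBkSB} verbatim, adding exactly your extra clause (its condition (c), that $N_k(\sigma\ast 1)+S_k(\sigma\ast 1)\leq N_k(\sigma\ast 0)+S_k(\sigma\ast 0)$ in the $0$-sided case) and carrying the invariant that the partial sums are $f$-sided. The only divergence is in how satisfiability is justified --- the paper waits for a stage $s$ with $M_s(\sigma\ast 0)>M_s(\sigma\ast 1)$, which is where strictness enters, while you verify the interval intersection directly (correctly, which also supports your closing remark that strictness is not needed); just note that when $2N_k(\sigma)=0$ the admissible set degenerates to the single rational point $x=y=0$ rather than an open interval, which still suffices for the search to terminate.
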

\begin{proof}
The proof is a simple adaptation of the proof of Lemma \ref{8tlYkfBkSB}, so we may refer
to the displayed equations in that proof, although the parameters have a modified meaning that
we determine below.
We give the proof of the case of single-sided martingales, as the case of decidably-sided
martingales is entirely analogous.
Without loss of generality, assume that $M$ is a \lce and 0-sided martingale.
and let $(M_s)$ be a \lce approximation to it so that
$M_{s+1}(\sigma)>M_{s}(\sigma)$ for all $s,\sigma$.
We define a computable family $(N_i)$ of martingales:
inductively assume that $N_i, i<k$ have been defined and are martingales, and
\begin{equation}\label{YLFqQkQXEDa}
S_k:=\sum_{i<k} N_i
\hspace{0.3cm} \textrm{is 0-sided, and for all $\sigma$, \hspace{0.3cm}}
S_k(\sigma)< M(\sigma) 
\end{equation}
Consider a stage $s_0$ such that $M_{s_0}(\lambda)>\sum_{i<k} N_i(\lambda)$
and let $N_k(\lambda)=M_{s_0}(\lambda)-S_k (\lambda)$.
Given $\sigma$, suppose inductively that 
we have defined $N_k(\sigma)$ in such a way that $N_k(\sigma) +S_k (\sigma)\leq M_{t}(\sigma)$ for some
stage $t$, and for each $\rho\prec\sigma$ we have $S_{k+1}(\rho\ast 0)\geq S_{k+1}(\rho\ast 1)$.
Since $M$ is a 0-sided martingale,
there exists some $s>t$ such that \eqref{Yxju2tReUU}
%
%
and $M_s(\sigma\ast 0)> M_s(\sigma\ast 1)$.
Then we let $N_k(\sigma\ast i), i=\{0,1\}$ be two non-negative rationals such that:
\begin{enumerate}[\hspace{0.5cm}(a)]
\item $N_k(\sigma\ast 0)+N_k(\sigma\ast 1)=2N_k(\sigma)$;
\item $N_k(\sigma\ast i)+S_k(\sigma\ast i)\leq M_s(\sigma\ast i)$ for each $ i=\{0,1\}$.
\item $N_k(\sigma\ast 1)+S_k(\sigma\ast 1)\leq N_k(\sigma\ast 0)+S_k(\sigma\ast 0)$ for each $ i=\{0,1\}$.
\end{enumerate}
This concludes the inductive definition of $N_k$ 
and also verifies the property \eqref{YLFqQkQXEDa} for $k+1$ in place of $k$.
\begin{equation}\label{cC9T8dYWba}
\parbox{14cm}{{\em Remark on the definition:} If 
$M_s(\sigma\ast 0)-M_s(\sigma\ast 1)<M_t(\sigma\ast 0)-M_t(\sigma\ast 1)$,
it is possible that the chosen values satisfy  $N_k(\sigma\ast 1)>N_k(\sigma\ast 0)$,
in which case $N_k$ is not 0-sided.}
\end{equation}
The totality of each $N_i$ is guaranteed by the fact that $M$ is a strictly 0-sided martingale.
It remains to show \eqref{TwatRWQaTU}.
%
%
From the definition of each $N_i(\lambda)$, it follows that 
\eqref{TwatRWQaTU} holds for $\sigma=\lambda$. Assuming \eqref{TwatRWQaTU} for $\sigma$,
we show that it holds for $\sigma\ast i, i\in \{0,1\}$.
We have \eqref{YRM8JDJ42h} as before,
so by \eqref{TwatRWQaTU} we have:
$\lim_k S_k(\sigma\ast 0) + \lim_k S_k(\sigma\ast 1)=
M(\sigma\ast 0)+M(\sigma\ast 1)$.
By \eqref{YLFqQkQXEDa} applied to $\sigma\ast 0$ and $\sigma\ast 1$
we get $\lim_k S_k(\sigma\ast i)=M(\sigma\ast i)$ for $i\in\{0,1\}$, as required.
This concludes the induction for \eqref{TwatRWQaTU}.
\end{proof}
\begin{lem}[Monotonous strongly left-c.e.\ martingales as mixtures]\label{iQMAP9apXx}
For every computable prediction function $f$ and every strictly
$f$-sided strongly \lce martingale $M$, 
there exists a uniformly computable sequence $(N_i)$ of $f$-sided martingales such that 
the partial sums $S_n=\sum_{i<n} N_i$ 
converge to $M$. 
\end{lem}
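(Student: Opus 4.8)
The plan is to imitate the construction in the proof of Lemma \ref{XSQ74P8ao}, but to exploit the extra hypothesis of \emph{strong} left-c.e.\ convergence in order to guarantee that each individual $N_i$ — not merely each partial sum $S_n$ — is $f$-sided. Recall that in Lemma \ref{XSQ74P8ao} the partial sums stayed $f$-sided, but an individual $N_k$ could fail to be $f$-sided: by the remark \eqref{cC9T8dYWba}, this happened precisely when the approximation to the wager $w_M(\sigma) = M(\sigma\ast f(\sigma)) - M(\sigma\ast(1-f(\sigma)))$ could temporarily \emph{decrease} between the stage $t$ at which $N_k(\sigma)$ was declared and the later stage $s$ at which $N_k(\sigma\ast 0), N_k(\sigma\ast 1)$ were chosen. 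Strong left-c.e.\ ness removes exactly this obstacle: by Definition \ref{pG43SuFmn} the wagers $w_M(\sigma)$ have strictly increasing computable rational approximations, uniformly in $\sigma$, so $|w_M(\sigma)|$ is nondecreasing along the approximation.

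First I would fix, without loss of generality, the $0$-sided case, and fix a strong \lce approximation $(M_s)$ to $M$ with $M_{s+1}(\sigma) > M_s(\sigma)$ for all $s,\sigma$ and with $w_{M_s}(\sigma) = M_s(\sigma\ast 1) - M_s(\sigma)$ strictly increasing in $s$; since $M$ is $0$-sided, $w_M(\sigma) < 0$, and strict monotonicity of the approximating sequence to $w_M(\sigma)$ from below means we may in fact assume $M_s(\sigma\ast 0) > M_s(\sigma\ast 1)$ for all $s$ and all $\sigma$ (truncating finitely many early stages, uniformly). Then I would run the same inductive construction as in Lemma \ref{XSQ74P8ao}: having defined $N_i$ for $i < k$ with $S_k = \sum_{i<k} N_i$ a $0$-sided martingale satisfying $S_k(\sigma) < M(\sigma)$, pick $s_0$ with $M_{s_0}(\lambda) > S_k(\lambda)$, set $N_k(\lambda) = M_{s_0}(\lambda) - S_k(\lambda)$, and extend: given $N_k(\sigma)$ with $N_k(\sigma) + S_k(\sigma) \le M_t(\sigma)$, choose a later stage $s > t$ with $M_s(\sigma\ast 0) + M_s(\sigma\ast 1) \ge 2N_k(\sigma) + 2S_k(\sigma)$ and pick non-negative rationals $N_k(\sigma\ast 0), N_k(\sigma\ast 1)$ with sum $2N_k(\sigma)$, each satisfying $N_k(\sigma\ast i) + S_k(\sigma\ast i) \le M_s(\sigma\ast i)$. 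The new point is that I would additionally demand $N_k(\sigma\ast 0) \ge N_k(\sigma\ast 1)$, and argue this is achievable: because $w_{M_s}(\sigma) \le w_{M_t}(\sigma) \le \dots$ is monotone the other way — more carefully, because $M_s(\sigma\ast 0) - M_s(\sigma\ast 1) \ge M_t(\sigma\ast 0) - M_t(\sigma\ast 1) \ge \dots \ge 0$ is \emph{nondecreasing} in $s$ — the "budget gap" on the $0$-side is at least as large as on the $1$-side relative to what $S_k$ already consumed, so the midpoint-splitting $N_k(\sigma\ast i) = N_k(\sigma) + \tfrac12\big(M_s(\sigma\ast i) - S_k(\sigma\ast i) - \tfrac12(\text{total gap})\big)$-type choice (equivalently, allocate the leftover $M_s(\sigma\ast i) - S_k(\sigma\ast i) - N_k(\sigma)$ symmetrically) yields $N_k(\sigma\ast 0) \ge N_k(\sigma\ast 1)$. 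I would then check that this makes $N_k$ itself $0$-sided, that $S_{k+1}$ remains $0$-sided and below $M$, that each $N_i$ is total (guaranteed by strictness of the approximation as in Lemma \ref{XSQ74P8ao}), and finally that $\lim_k S_k(\sigma) = M(\sigma)$ for every $\sigma$ by the identical induction on $\sigma$ using \eqref{YRM8JDJ42h}.

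The main obstacle is the delicate step of simultaneously satisfying all of (a) $N_k(\sigma\ast 0) + N_k(\sigma\ast 1) = 2N_k(\sigma)$, (b) $N_k(\sigma\ast i) + S_k(\sigma\ast i) \le M_s(\sigma\ast i)$, (c) $N_k(\sigma\ast 0) \ge N_k(\sigma\ast 1)$, \emph{and} (d) non-negativity of both values, using only the monotonicity of the wager approximation rather than a lower bound on its magnitude. The right bookkeeping is to track, inductively, that $M_t(\sigma\ast 0) - S_k(\sigma\ast 0) \ge M_t(\sigma\ast 1) - S_k(\sigma\ast 1)$ whenever $S_k(\sigma\ast i)$ has been committed relative to stage $t$ — this inequality is maintained because $S_k$ is built from $0$-sided pieces and $M_t$ has a $0$-sided approximation that only grows its $0$-bias — and then constraint (c) becomes automatically compatible with (b). The decidably-sided case is, as in Lemma \ref{XSQ74P8ao}, obtained by the same argument with $0$ replaced by $f(\sigma)$ and $1$ by $1-f(\sigma)$ throughout, so I would only remark on it rather than repeat it.
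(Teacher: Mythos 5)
Your proposal is correct and follows essentially the same route as the paper: use strong \lce ness to fix an approximation $(M_s)$ in which $M_s(\sigma\ast 0)-M_s(\sigma\ast 1)$ is nondecreasing in $s$, and then rerun the construction of Lemma \ref{XSQ74P8ao} with the extra clause $N_k(\sigma\ast 1)\leq N_k(\sigma\ast 0)$, whose feasibility is exactly what the monotone difference guarantees. Your additional bookkeeping invariant $M_s(\sigma\ast 0)-S_k(\sigma\ast 0)\geq M_s(\sigma\ast 1)-S_k(\sigma\ast 1)$ is a sound (indeed more explicit) justification of the paper's remark that the extra clause ``does not affect the existing argument,'' though the particular symmetric-split formula you sketch should be replaced by any rational choice in the nonempty feasible interval, since the midpoint value can violate the capacity bound on the $1$-side in edge cases.
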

\begin{proof}
We do the proof for the case when $M$ is strictly 0-sided, as the more general case
is entirely similar.
By \eqref{cC9T8dYWba} the application of 
the construction in Lemma \ref{XSQ74P8ao} to the given $M$ does not ensure that 
the $N_i$ are 0-sided. In order to achieve this, we note that since $M$ is
assumed to be strongly \lce and 0-sided, there exists a computable \lce approximation 
$(M_s)$ to $M$ such that $M_{t}(\sigma)>M_{s}(\sigma)$ and
$M_s(\sigma\ast 0)-M_s(\sigma\ast 1)<M_t(\sigma\ast 0)-M_t(\sigma\ast 1)$
for all $s<t$ and all $\sigma$. Using this approximation $(M_s)$ we can apply the construction
 in  Lemma \ref{XSQ74P8ao} with the extra clause
(d): $N_k(\sigma\ast 1)\leq N_k(\sigma\ast 0)$ in the
induction step for the definition of  $N_k(\sigma\ast i), i=\{0,1\}$.
This extra clause does not affect the existing argument, hence the constructed $(N_k)$
is a computable family of martingales such that their mixture converges to $M$. In addition,
clause (d) in the inductive definition directly guarantees that each 
$N_k$ are 0-sided.
\end{proof}
The constructions in
\S \ref{aRvZodD8FN}, \S\ref{BBcmfYbEE} rely on the existence of certain
`canonical' effective approximations.
\begin{defi}[Canonical approximations of monotonous martingales]\label{M1XF9CwRuD}
Given a computable prediction function $f$ and a \lce $f$-sided martingale $N$,
a {\em canonical approximation} to $N$ is a computable family $(S_i)$
of $f$-sided martingales that converge to $N$ such that each $S_{i+1}-S_i$
is also an $f$-sided martingale.
\end{defi}
By Lemma \ref{XSQ74P8ao} every strictly decidably-sided 
strongly \lce martingale has a canonical approximation.
Given single-sided martingales
$N,T$, we say that  $(M_s)$ is a canonical approximation to
the separable martingale
$M=N+T$ if $M_s=N_s+T_s$ for
canonical approximations $(N_s), (T_s)$ of $N,T$ respectively.

\subsection{Monotonous betting on a biased coin}\label{OC53m9ZSeG}
We give two examples of types of biases can be exploited
through single-sided or separable strategies,
establishing basic properties of monotonous betting that will be used mainly in \S\ref{Sp9icqFzvI}.

{\bf Monotonous betting on Villes' casino sequence.} 
A well-known\footnote{Short expositions of the debate in relation to the notion of 
algorithmic randomness can be found on textbooks on this topic such as
\cite[\S1.9]{Li.Vitanyi:93} and \cite[\S 6.2]{rodenisbook}. Extended discussions of the philosophical
underpinnings of this debate can be found in \cite{vanlamb87} and the more recent \cite{Blandothesis}.} 
debate in the early days of probability occurred between the 
competing approaches
of Kolmogorov, which won the debate, and the frequentist-based approach of von Mises, 
for the establishment of the foundations of probability. A significant factor for the loss of support
to von Mises' theory was a certain casino sequence constructed by \cite{villemartin}\footnote{An English
translation can be found at \texttt{http://www.probabilityandfinance.com/misc/ville1939.pdf}.
Simpler proofs of Ville's theorem appear in \cite{villeeasier} and \cite[\S 6.5]{rodenisbook}} which
is `random' with respect to any given countable collection of choice sequences (a basic tool in von Mises' 
strictly frequentist approach) but is biased according to a well-accepted statistical test: although the
frequency
of 0s approaches 1/2, in all initial segments this frequency never drops below 1/2.
We point out that the bias in Villes' well-known example is exploitable by
computable monotonous betting.
In order to see this, let
$z_n,o_n$ be the number of 0s and 1s respectively, 
in the first $n$ bits of Ville's casino sequence, so that
$z_n \geq o_n$ for all $n$.
In the case where
$\sup_n (z_n-o_n) = \infty$ our strategy is to 
start with capital 1, and bet wager 1 on outcome 0 at each step.
In the case where
$\limsup_n (z_n-o_n):=k<\infty$, given $k$ and a stage $t$
such that for all $n\geq t$ we have $z_n-o_n\leq k$, we can used the following strategy:
given any stage $s_0>t$, find some $n\geq s_0$ such that 
$z_n-o_n=k$ and at this $n$ bet on 1. In order to avoid the dependence of this strategy on
the parameters $k,t$, we can consider a mixture including a strategy for 
each possible pair $(k,t)$, with initial capital for the $s$-th strategy equal to $2^{-s}$
(so that the total initial capital is finite).Note that in the first case the strategy is 0-sided
and in the second case it is 1-sided; moreover in both cases, under the respective assumption,
the strategies are successful on Ville's casino sequence.
The mixture of these two strategies is a computable 
separable strategy and is successful on 
Ville's sequence.

{\bf Monotonous betting for skewed or non-existent limiting frequency.} 
Given a casino sequence $X$ with limiting frequency of 0s different than 1/2,
there is a single-sided betting strategy that is successful on $X$. Moreover
there is a separable martingale which succeeds on every such $X$, 
irrespective of whether the frequency is above or below 1/2, or even how much
it differs from 1/2. A slightly more general version of these facts,
is the following form of
Hoeffding's inequality which we prove via of betting strategies,
and which will be used in our later arguments.. 
\begin{lem}[Hoeffding for prediction functions]\label{PvVOtknZpc}
Given  $q>1/2$, $n\in \omega$ and a prediction function $f$, 
the number of strings in $2^n$ for which the number of correct $f$-guesses is 
more than $qn$ is at most $r_q^{-n}\cdot 2^{n}$, where $r_q>1$ is a function of $q$.
So the number of strings in $2^n$ for which the number of correct $f$-guesses is in 
the interval $((1-q)n, qn)$ is at least $2^n\cdot (1-2 r_q^{-n})$.
\end{lem}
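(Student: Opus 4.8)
The plan is to prove the bound via a betting (martingale) argument, exactly as the lemma's phrasing invites. Fix $q>1/2$ and a prediction function $f$. First I would define the single-sided-in-spirit martingale $M_f$ that, starting from capital $1$ at the empty string, at every state $\sigma$ bets a fixed fraction $\beta$ of its current capital on the outcome $f(\sigma)$; that is, $M_f(\sigma\ast f(\sigma)) = (1+\beta)M_f(\sigma)$ and $M_f(\sigma\ast(1-f(\sigma))) = (1-\beta)M_f(\sigma)$ for a parameter $\beta\in(0,1)$ to be optimized. This is a genuine martingale because $\tfrac12(1+\beta)+\tfrac12(1-\beta)=1$, and it is $f$-sided. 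For a string $\tau\in 2^n$ with exactly $c$ correct $f$-guesses, one computes $M_f(\tau) = (1+\beta)^c(1-\beta)^{n-c}$.

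Next I would apply Ville's/Kolmogorov's inequality \eqref{xPWvKAIctR}: taking $S$ to be the set of all strings in $2^n$ with more than $qn$ correct $f$-guesses (a prefix-free, in fact antichain, set since all strings have the same length $n$), we get $\sum_{\tau\in S} 2^{-n} M_f(\tau) \leq M_f(\lambda) = 1$. Since for each such $\tau$ we have $c > qn$ and hence $M_f(\tau) \geq (1+\beta)^{qn}(1-\beta)^{(1-q)n}$ — here I must check monotonicity of $c\mapsto (1+\beta)^c(1-\beta)^{n-c}$, which holds since $(1+\beta)/(1-\beta)>1$, so the minimum over $c>qn$ is approached as $c\downarrow qn$ — it follows that $|S|\cdot 2^{-n}\cdot (1+\beta)^{qn}(1-\beta)^{(1-q)n} \leq 1$, i.e. $|S| \leq 2^n \cdot \big((1+\beta)^{q}(1-\beta)^{1-q}\big)^{-n}$. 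Setting $r_q := \sup_{0<\beta<1} (1+\beta)^q(1-\beta)^{1-q}$ and optimizing (the standard choice $\beta = 2q-1$ works, giving $r_q = (2q)^q(2-2q)^{1-q}$, which by strict convexity of $-\log$ exceeds $1$ precisely because $q\neq 1/2$) yields $r_q>1$ as a function of $q$ alone, and $|S|\leq r_q^{-n}2^n$, which is the first claim.

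For the second claim, by symmetry the same bound applies to the set of strings in $2^n$ with fewer than $(1-q)n$ correct $f$-guesses: run the identical argument with the martingale that bets fraction $\beta$ on $1-f(\sigma)$ at each state (equivalently, apply the first part to the prediction function $1-f$, noting correct $(1-f)$-guesses are exactly false $f$-guesses). Each of these two "bad" sets has size at most $r_q^{-n}2^n$, so their union has size at most $2r_q^{-n}2^n$, and the complement — the strings whose number of correct $f$-guesses lies strictly between $(1-q)n$ and $qn$ — has size at least $2^n(1-2r_q^{-n})$, giving the second claim.

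\textbf{Main obstacle.} None of the steps is deep; the only point requiring genuine care is verifying that the optimized constant $r_q$ is strictly greater than $1$ and depends only on $q$ (not on $f$ or $n$) — this is where the hypothesis $q>1/2$ is used and amounts to a short convexity computation ($\log r_q = q\log(2q) + (1-q)\log(2-2q)$, the negative of the relative entropy between a $(q,1-q)$ distribution and the uniform one, hence positive unless $q=1/2$). I would also double-check the edge issue that "more than $qn$" is a strict inequality, so when $qn$ is an integer the minimizing value of $c$ is $\lceil qn\rceil$ or $\lfloor qn\rfloor + 1$; since $(1+\beta)^c(1-\beta)^{n-c}$ is increasing in $c$, bounding below by the value at $c=qn$ is still valid and only loses a harmless constant factor, which can be absorbed by slightly shrinking $r_q$ or simply stated with the $r_q^{-n}$ bound as an asymptotic-order claim consistent with the lemma's wording.
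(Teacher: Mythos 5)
Your proof is correct and takes essentially the same route as the paper: the paper directly defines the martingale $d(\sigma)=2^{|\sigma|}q^{z_{\sigma}}(1-q)^{o_{\sigma}}$, which is exactly your $M_f$ with the optimal choice $\beta=2q-1$, and both arguments conclude via Kolmogorov's (Ville's) inequality with $r_q=2q^q(1-q)^{1-q}$, handling the lower tail by symmetry (betting on $1-f$). One cosmetic slip only: $\log r_q=q\log(2q)+(1-q)\log(2-2q)$ is the relative entropy of $(q,1-q)$ with respect to the uniform distribution (not its negative), which is indeed positive precisely when $q\neq 1/2$, so your conclusion stands.
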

\begin{proof}
Given $f$, let 
$z_{\sigma}$ denote the number of correct $f$-guesses with respect to $\sigma$, and let $o_{\sigma}$
be the number of false $f$-guesses with respect to  $\sigma$.
For each $q>1/2$, consider the function $d:\twomel\to\mathbb{R}^{+}$ 
defined by $d(\lambda)=1$ and 
$d(\sigma)=2^{|\sigma|}\cdot q^{z_{\sigma}}\cdot (1-q)^{o_{\sigma}}$.
Note that, if $f(\sigma)=0$ then:
\[
d(\sigma\ast 0)+d(\sigma\ast 1)=2^{|\sigma|+1}\cdot \left(q^{z_{\sigma}+1}\cdot (1-q)^{o_{\sigma}}+
q^{z_{\sigma}}\cdot (1-q)^{o_{\sigma}+1}\right)=
2^{|\sigma|+1}\cdot q^{z_{\sigma}}\cdot (1-q)^{o_{\sigma}}=2d(\sigma).
\]
The same is true in the case that $f(\sigma)=1$, so that $d$
is a martingale, which bets 
$|d(\sigma\ast 0)-d(\sigma)|=(2q-1)d(\sigma)$
on the prediction of $f$ at $\sigma$.
For each $\sigma$ let $p_{\sigma}=z_{\sigma}/|\sigma|$, 
so that $1-p_{\sigma}=o_{\sigma}/|\sigma|$. Suppose that $p_{\sigma}>q$. Then
\[
d(\sigma)= \left(2\cdot q^{p_{\sigma}}\cdot (1-q)^{1-p_{\sigma}}\right)^{|\sigma|}>
\left(2\cdot q^{q}\cdot (1-q)^{1-q}\right)^{|\sigma|}
\]
where the second inequality holds because the function $x\mapsto 2q^x(1-q)^{1-x}$
is increasing\footnote{The derivative of $x\mapsto q^x(1-q)^{1-x}$ 
is $\big(\log q-\log (1-q)\big)\cdot (1-q)^{1-x}\cdot  q^x$.} in $(0,1)$ when $q>1/2$.
Again by considering the derivatives, we can see that the function $q\mapsto q^q\cdot (1-q)^{1-q}$ is
decreasing in $(0,1/2)$, increasing in $(1/2,1)$ and it has a global minimum in $(0,1)$ at $q=1/2$, 
at which point it takes the value 1/2. So if we let $r_q:=2q^q\cdot (1-q)^{1-q}$ and recall that $q>1/2$ we get
$r_q>1$ and $d(\sigma)\geq r_q^{|\sigma|}$ for each $\sigma$ with $p_{\sigma}>q$. 
From  Kolmogorov's inequality in then follows that, if $t_n$ is the number of strings $\sigma\in 2^n$
with $p_{\sigma}>q$, then
$t_n\cdot 2^{-n}<r_q^{-n}$. So $t_n<r_q^{-n}\cdot 2^{n}$ as required.
\end{proof}
%

{\bf Computable single-sided randomness and frequency.}
Lemma \ref{PvVOtknZpc} 
says that for each total prediction function $f$, 
with high probability the number of correct $f$-guesses along a 
binary string $\sigma$ are concentrated around $|\sigma|/2$.
In fact, there exists
a separable computable martingale which succeeds on every stream $X$ with the property that
the proportion of correct $f$-guesses along $X$ does not reach limit 1/2.
For each $q\in (1/2,1)$ let $T_q(\lambda)=1$,
and define $T_q(\sigma)=2^{|\sigma|}\cdot q^{z_{\sigma}}\cdot (1-q)^{o_{\sigma}}$
where $z_{\sigma}$ is the number of correct $f$-guesses with respect to $\sigma$ and 
$o_{\sigma}$ is the number of false $f$-guesses with respect to $\sigma$.
By the proof of Lemma \ref{PvVOtknZpc}, $T_q(\sigma)$ is a martingale
and $\limsup_s T_q(X\restr_n)=\infty$ for each $X$ such that 
$\limsup_s z_{X\restr_n}/n>q$. Similarly, 
$T_{q}(\sigma)$ is a martingale for each $q<1/2$,
and $\limsup_s T_q(X\restr_n)=\infty$ for each $X$ such that 
$\limsup_s z_{X\restr_n}/n<q$.
Let $q_i=1/2+2^{-i-1}$ and $p_i=1/2-2^{-i-1}$ for each $i$ and define:
\[
N(\sigma)=\sum_{i} 2^{-i} \cdot T_{q_i}(\sigma)+
\sum_{i} 2^{-i} \cdot T_{p_i}(\sigma).
\]
Then $N$ is a computable martingale and by the properties of
$T_{q_i}, T_{p_i}$, it succeeds on every $X$ for which the
proportion of correct $f$-guesses does not tend to 1/2. In the case that $f$
is the constant zero function  $T_q$ is 0-sided, which implies the following fact, where
`computably single-sided random' is a sequence where no computable single-sided (super)martingale succeeds.
\begin{equation}\label{GCBYAq98IA}
\parbox{14cm}{There exist computable families $(N_i)$, $(T_i)$ of single-sided strategies 
such that $\sum_i (N_i+T_i)$ has finite initial capital and succeeds on all 
sequences whose limiting 0-frequency is not 1/2. Hence
each computably single-sided random has  0-frequency tending to 1/2.}
\end{equation}
By `0-frequency' we mean the (relative) frequency of 0 in the initial segments of the sequence.
Hence weak s-randomness for $s\in (0,1)$ does not imply computable single-sided randomness. 
However 
in Proposition \ref{LXn23yy81K}, 
we will see that
the \lce version of single-sided randomness does imply weak $1/2$-randomness.

\subsection{Speed of success and effective Hausdorff dimension}\label{Sp9icqFzvI}
The martingale approach to algorithmic information theory was introduced by \citep{Schnorr:71, Schnorr:75}
who also showed some interest in the rate of success of (super)martingales $M$, and in particular the classes
\[
S_h(M)=\left\{X\ |\ \limsup_n \frac{M(X\restr_n)}{h(n)}=\infty\right\}
\]
where $h:\Nat\to\Nat$ is a computable non-decreasing function.
Later \citep{Lutz:00,Lutz:03} showed that the Hausdorff dimension of a class of reals can be characterized
by the exponential `success rates' of \lce supermartingales, and in that light defined the 
effective Hausdorff dimension $\dim (X) $ of a real $X$ as the infimum of the  $s\in (0,1)$ such that $X\in S_{h}(M)$ for some
\lce supermartingale $M$, where $h(n)=2^{(1-s)n}$.
Then \citep{Mayordomo:02} showed that
\begin{equation}\label{kleTr2ZET}
\dim (X)=\text{lim inf}_n \frac{C(X\restr_n)}{n}=\text{lim inf}_n \frac{K(X\restr_n)}{n}
\end{equation}
where $C$ and $K$ denote the plain and \pf Kolmogorov complexity respectively.
Reals with effective Hausdorff dimension 
1/2 include partially predictable reals (with an imbalance of 0s and 1s) like
$Y\oplus \emptyset$ where $Y$ is algorithmically random, as well as 
random-looking versions of the halting probability like
$\sum_{U(\sigma)\de} 2^{-2|\sigma|}$ for certain universal \pf machines $U$ from 
\citep{MR1888278}. \ml random reals have effective dimension 1, 
but the converse does not hold. Moreover there
are computably random reals of effective dimension 0. 
For more on algorithmic dimension see
\citep[Chapter 13]{rodenisbook}.

{\bf Monotonous betting on sequences with dimension less than half.}
We construct a computable mixture of separable strategies,
which succeeds on every sequence of effective Hausdorff dimension $<1/2$.
For this task we need a characterization of effective dimension in terms of tests.
Given $s\in (0,1)$, an $s$-test is a uniformly \ce sequence $(V_i)$ of sets of strings such that
$\sum_{\sigma\in V_k} 2^{-s|\sigma|}<2^{-k}$ for each $k$. 
As mentioned in \cite[\S 13.6]{rodenisbook} and is the 
case for most notions of effective statistical tests, 
\begin{equation}\label{IWvHgcPRHK}
\parbox{11cm}{given $s\in (0,1)$ one can effectively obtain an
effective list of all $s$-tests.}
\end{equation}
Since $s<1$, the condition $\sum_{\sigma\in V_k} 2^{-s|\sigma|}<2^{-k}$ means that
the length of each string in $V_k$ is more than $k$.
These observations will be used in the proof of Theorem \ref{LXn23yy81K}.
Let us say that $X$ is
{\em weakly $s$-random} if it avoids all $s$-tests $(V_i)$, in the sense that there are only finitely many 
$i$ such that $X$ has a prefix in $V_i$. By
\citep{MR1888278}, $X$ being weakly $s$-random is equivalent to   
$\exists c\ \forall n\ K(X\restr_n)>s\cdot n-c$. Then by \eqref{kleTr2ZET},
\begin{equation}\label{DitiNX8zan}
\dim (X)=\sup \{s\ |\ \textrm{$X$ is weakly $s$-random}\}
\end{equation}
which is crucial for the proof of the following fact, which
complements our main theorems in 
\S \ref{kB1LksuUZG} and \S \ref{BBcmfYbEE}.
\begin{thm}[Monotonous betting for low dimension]\label{LXn23yy81K}
There exist uniformly computable 0-sided and 1-sided strategies $(N_i)$ and $(T_i)$ respectively
such that the mixture $\sum_i (N_i+T_i)$ has finite initial capital and succeeds on all $X$ such that
$\dim (X) <1/2$.
\end{thm}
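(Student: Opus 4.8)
The plan is to exploit the characterization of effective Hausdorff dimension via weakly $s$-randomness in \eqref{DitiNX8zan} together with the fact \eqref{IWvHgcPRHK} that one can effectively list all $s$-tests. First I would fix a decreasing sequence $s_j\to 1/2$ with $s_j\in(1/2,1)$, say $s_j=1/2+2^{-j}$. For each $j$, by \eqref{IWvHgcPRHK} I obtain an effective enumeration $(V^{j}_i)_i$ of all $s_j$-tests. If $\dim(X)<1/2$ then, for every $j$, $X$ is not weakly $s_j$-random (since $s_j>1/2>\dim(X)$), so by the definition of weakly $s_j$-random there is some particular test $(V^{j,e}_i)_i$ in the list on which $X$ fails, i.e. $X$ has a prefix in $V^{j,e}_i$ for infinitely many $i$. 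So it suffices to build, for each pair $(j,e)$, a computable separable strategy $M_{j,e}$ with small initial capital that succeeds on every $X$ which has prefixes in infinitely many $V^{j,e}_i$, and then take the mixture $\sum_{j,e} M_{j,e}$ with initial capitals summing to a constant.

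\textbf{Building $M_{j,e}$ from a single $s$-test.} Fix $s=s_j\in(1/2,1)$ and a single $s$-test $(V_i)_i$ with $\sum_{\sigma\in V_i}2^{-s|\sigma|}<2^{-i}$. Recall (from the discussion before \eqref{DitiNX8zan}) that every string in $V_i$ has length $>i$. The standard way to turn a test into a martingale is: for each $i$ define a martingale $d_i$ with $d_i(\lambda)=\sum_{\sigma\in V_i}2^{(1-s)|\sigma|}$ which, when it reaches a string $\sigma\in V_i$, has value $2^{(1-s)|\sigma|}\geq 2^{(1-s)i}$, and which otherwise just bets so as to distribute its capital down to the strings of $V_i$ (formally $d_i(\tau)=\sum\{2^{(1-s)|\sigma|-(|\sigma|-|\tau|)}\cdot 2^{\text{(balance)}}: \sigma\in V_i,\ \tau\preceq\sigma\}$ — the usual Kraft/Solovay-style construction). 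Since $\sum_i d_i(\lambda)=\sum_i\sum_{\sigma\in V_i}2^{(1-s)|\sigma|}\cdot 2^{-s|\sigma|}\cdot 2^{s|\sigma|}$... more cleanly: $d_i(\lambda)=\sum_{\sigma\in V_i}2^{(1-s)|\sigma|}$, and because lengths in $V_i$ exceed $i$ and $1-s<1/2$, one checks $\sum_{\sigma\in V_i}2^{(1-s)|\sigma|}\le 2^{(1-s)i}\cdot\sum_{\sigma\in V_i}2^{(1-s)(|\sigma|-i)}\le \dots$; the point is that after rescaling one can arrange $\sum_i d_i(\lambda)<\infty$ while $d_i$ attains value $\geq 2^{(1-s)i}\to\infty$ on any $X$ with a prefix in $V_i$. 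Setting $M_{j,e}=\sum_i d_i$ gives a computable (l.c.e., in fact the $d_i$ form a uniformly computable family) martingale with $\limsup_n M_{j,e}(X\restr_n)=\infty$ whenever $X$ has prefixes in infinitely many $V_i$.

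\textbf{Making it separable.} The martingale $M_{j,e}$ above is not single-sided, so the final step — and I expect this to be the main technical point — is to replace each $d_i$ by a \emph{separable} martingale $d_i^{0}+d_i^{1}$ achieving (up to a constant factor) the same success. The idea: when $d_i$ funnels capital from $\lambda$ down toward a target string $\sigma\in V_i$, at each position $\tau\prec\sigma$ it bets on the single bit $\sigma(|\tau|)$; split this bet into the part that bets on $0$ and the part that bets on $1$, i.e. route all the "bet on $0$" wagers into a $0$-sided component and all the "bet on $1$" wagers into a $1$-sided component. Concretely, replace $d_i$ by $e_i^0+e_i^1$ where $e_i^0$ is the $0$-sided martingale that carries out exactly the moves of $d_i$ at positions where $d_i$ bets on $0$ and sits idle (bets $0$) elsewhere, and symmetrically $e_i^1$; one must be slightly careful so that the idle stretches don't lose capital, which is automatic since idling is allowed, and so that the capital handed to a component is available when needed — this costs at most a constant factor in initial capital, absorbed into the mixture. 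Summing over $i$ and over all pairs $(j,e)$ with initial capital $2^{-\langle j,e,i\rangle}$ for the $i$-th piece, we get uniformly computable $0$-sided $(N_i)$ and $1$-sided $(T_i)$ with $\sum_i(N_i(\lambda)+T_i(\lambda))<\infty$; by the above, for any $X$ with $\dim(X)<1/2$ the relevant pair $(j,e)$ exists and the corresponding block of the mixture already has $\limsup_n=\infty$, hence so does $\sum_i(N_i+T_i)$. The only real obstacle is verifying that the separable splitting of each test-martingale can be done uniformly and with bounded total capital; everything else is bookkeeping with Kolmogorov's inequality \eqref{xPWvKAIctR} and the effective listing \eqref{IWvHgcPRHK}.
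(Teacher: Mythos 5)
Your outline (use \eqref{DitiNX8zan} and \eqref{IWvHgcPRHK}, turn tests into martingales, mix over all tests) is the right skeleton, but the step you yourself flag as the main technical point --- ``making it separable'' --- is where the argument breaks, and it breaks for a quantitative reason, not a bookkeeping one. A $0$-sided martingale that follows a target string $\sigma$ can only bet on the $0$-bits of $\sigma$, so it multiplies its capital by at most $2^{z_{\sigma}}$, where $z_{\sigma}$ is the number of $0$s in $\sigma$; symmetrically the $1$-sided component gains at most $2^{o_{\sigma}}$. The unrestricted $d_i$ gains $2^{|\sigma|}$ because it bets on every bit, and splitting its wagers into a $0$-sided and a $1$-sided pool turns that product into a sum: the separable replacement reaches at most (initial capital)$\cdot 2^{\max(z_{\sigma},o_{\sigma})}$, i.e.\ roughly $2^{|\sigma|/2}$ times the initial capital when $\sigma$ is balanced. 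So the cost of the splitting is a factor of about $2^{\min(z_\sigma,o_\sigma)}$, exponential in $|\sigma|$, not ``a constant factor.'' Combined with your choice $s_j>1/2$, the separable version of $d_i$ reaches only about $2^{(1/2-s_j)|\sigma|}$ on balanced test strings, which tends to $0$ rather than to infinity; and balanced strings are exactly the hard case, since the sequences one must defeat may have limiting $0$-frequency $1/2$. Note also that if the ``constant factor'' claim were true, the same construction run with $s$ close to $1$ would produce a separable mixture of computable strategies succeeding on every sequence of effective dimension below $1$, contradicting Theorem \ref{1iotxpCAHo}. (A smaller slip: the initial capital of the test-martingale should be $\sum_{\sigma\in V_i}2^{-s|\sigma|}$, not $\sum_{\sigma\in V_i}2^{(1-s)|\sigma|}$.)

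The missing idea is that the hypothesis $\dim(X)<1/2$ must be used to get tests with exponent \emph{below} $1/2$, which is exactly what compensates for the $2^{|\sigma|/2}$ loss inherent in single-sidedness. Pick a rational $q$ with $\dim(X)<q<1/2$ and a $q$-test $(V_i)$ that $X$ fails infinitely often; give the strategy targeting $\sigma$ initial capital $2^{-q|\sigma|}$ and let it bet all-in only on the $0$s of $\sigma$ (and a twin on the $1$s). Since $\max(z_{\sigma},o_{\sigma})\geq|\sigma|/2$, one of the two reaches at least $2^{(1/2-q)|\sigma|}$, which is unbounded, while the weight condition $\sum_{\sigma\in V_k}2^{-q|\sigma|}<2^{-k}$ keeps the total initial capital finite; mixing over an effective list of all $q$-tests for all rationals $q<1/2$ as in \eqref{IWvHgcPRHK} finishes the job. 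This is essentially the paper's proof, except that the paper first disposes of all $X$ whose limiting $0$-frequency is not $1/2$ using the frequency martingales of \eqref{GCBYAq98IA}, and then, for frequency-$1/2$ sequences, uses only the $0$-sided strategies, since along prefixes of such $X$ one has $z_{\sigma}>q'|\sigma|$ for some $q'\in(q,1/2)$.
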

\begin{proof}
By \eqref{GCBYAq98IA} that we established in 
\S\ref{OC53m9ZSeG}, it suffices to construct a computable
family $(N_i)$ of 0-sided strategies such that $\sum_i N_i$ has finite initial capital
and succeeds on every sequence $X$ which has
limiting 0-frequency 1/2 and
 $\dim (X) <1/2$. For each  $X$ with these properties, by \eqref{DitiNX8zan}
there exists a rational $q<1/2$ and a $q$-test $(V_i)$
such that $X$ has prefixes in infinitely many $V_i$.
It suffices to prove that:
\begin{equation}\label{YKrvZ2mzRq}
\parbox{14cm}{given $\epsilon>0$, $q<1/2$ and a $q$-test $(V_i)$, we can effectively
define a computable family $(M_{i})$ of 0-sided strategies such that
$\sum_i M_i$ has initial capital less than $\epsilon$ and succeeds on every $X$ with limiting 0-frequency
equal to 1/2 and prefixes in infinitely many members of $(V_i)$.}
\end{equation}
Indeed, given \eqref{YKrvZ2mzRq}
and \eqref{IWvHgcPRHK} we can effectively produce a family of 0-sided strategies
whose mixture has bounded initial capital and deal with any possible 
$q$-test $(V_i)$ for any choice of $q<1/2$.
For the proof of  \eqref{YKrvZ2mzRq}, 
given $\epsilon>0$, $q<1/2$ and a $q$-test $(V_i)$, 
let $k_{\epsilon}$ be the least integer such that
$2^{-k_{\epsilon}}<\epsilon/2$.
We define a computable family $(N_{\sigma})$ of 0-sided strategies (indexed 
by strings) and let 
\[
M_i=\sum_{\sigma\in V_{k_{\epsilon} + i}} N_{\sigma}
\hspace{0.5cm}\textrm{and}\hspace{0.5cm}
M=\sum_i M_i.
\]
Under this definition of $M_i$ and the
properties $q$-tests, 
for $M(\lambda)<\epsilon$
it suffices to let $N_{\sigma}(\lambda)=2^{-q|\sigma|}$ 
so that
\[
M_i(\lambda)=
\sum_{\sigma\in V_{k_{\epsilon} + i}} N_{\sigma}(\lambda)=
\sum_{\sigma\in V_{k_{\epsilon} + i}} 2^{-q|\sigma|}< 2^{-k_{\epsilon} - i}
\Rightarrow 
M(\lambda)<2\cdot 2^{-k_{\epsilon}} < \epsilon.
\]
For each $i$ and each $\sigma$ strategy
$N_{\sigma}$ starts with 
$N_{\sigma}(\lambda)=2^{-q|\sigma|}$ and 
bets all capital on all the 0s of $\sigma$, while placing no bets on all other strings.  
Formally, for each non-empty $\rho$ define:
\[
N_{\sigma}(\rho)=
\left\{\begin{array}{cl}
N_{\sigma}(\hat{\rho})&\textrm{if $\hat{\rho}\ast 0\not\preceq\sigma$}\\
2\cdot N_{\sigma}(\hat{\rho})&\textrm{otherwise.}
\end{array}\right\}
\hspace{0.3cm}\textrm{where $\hat{\rho}$ denotes the predecessor of $\rho$.}
\]
Since each $N_{\sigma}$ is 0-sided, $M_i$ and $M$ 
are also 0-sided and, as noted above, $M(\lambda)<\epsilon$. Hence 
for \eqref{YKrvZ2mzRq} it remains to
verify that $M$ succeeds on every $X$ with limiting 0-frequency 1/2 and 
prefixes in infinitely many members of $(V_i)$.
To this end we observe that, as a direct consequence of 
the definitions of $N_{\sigma}, M_i, M$: 
\begin{equation}\label{lVon8b4uYY}
\parbox{13cm}{if $\sigma\in V_{k_{\epsilon}+i}$ 
has $z_{\sigma}$ many 0s then for each $\rho\succeq\sigma$, 
$M_i(\rho)\geq N_{\sigma}(\rho)=N_{\sigma}(\sigma) =
2^{z_{\sigma}-q|\sigma|}$.}
\end{equation}
Given $X$ with limiting 0-frequency 1/2 and 
prefixes in infinitely many members of $(V_i)$, let $q'$ be a rational in $(q,1/2)$.
Since the limiting frequency of 0s in $X$ is 1/2, 
there exists some $n_0$ such that for each $n>n_0$ the number $z_{X\restr_n}$ of 0s 
in $X\restr_n$ is more than $q'n$, so
$2^{z_{X\restr_n}-qn}>2^{(q'-q)n}$. Given any constant $c$, let $n_1>n_0$ be such that
$2^{(q'-q)n_1}>c$. Let $n_2>\max\{n_1,k_{\epsilon}\}$ be such that $X$ has a prefix in 
$V_{n_2}$ and all strings in $V_{n_2}$ are of length at least $n_1$. 
If $\sigma$ is a prefix of $X$ in $V_{n_2}$, by the definition
of $M$ and \eqref{lVon8b4uYY},  for all $n\geq n_2$  we have 
\[
M(X\restr_n)\geq M_{n_2}(X\restr_n)\geq 
N_{\sigma}(\sigma)\geq 2^{z_{\sigma}-q}>
2^{(q'-q)n_1}>c.
\]
Since $c$ was arbitrary, this shows that
that $\lim_n M(X\restr_n)=\infty$ for all $X$ with the properties of \eqref{YKrvZ2mzRq}.
\end{proof}

\section{The power of single-sided martingales and their mixtures}\label{kB1LksuUZG}\label{aRvZodD8FN}
We show that if a computable martingale succeeds on some casino sequence $X$,
then there exists a computable single-sided martingale which succeeds on $X$.
This is a consequence of the following decomposition,
which was also noticed independently by Frank Stephan.
\begin{lem}[Single-sided decomposition]\label{aycCdojlpe} 
Every martingale $M$ is the product of a 0-sided martingale $N$ and a 1-sided martingale $T$.
Moreover $N,T$ are computable from $M$.
\end{lem}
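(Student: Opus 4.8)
The statement is multiplicative, so the natural first move is to pass to logarithms. Since a martingale $M$ may take the value $0$, I would first reduce to the case where $M(\sigma)>0$ for all $\sigma$: if $M(\lambda)=0$ then $M\equiv 0$ and the decomposition is trivial, and otherwise I can write $M$ as the sum of the positive martingale $\sigma\mapsto M(\sigma)+2^{-|\sigma|}M(\lambda)$-type correction — more cleanly, absorb the zeros by noting that where $M$ hits $0$ it stays $0$, and handle that subtree separately by setting both factors to $0$ there. So assume $M(\sigma)>0$ everywhere. The idea is then to define the \emph{ratios} $r(\sigma)=M(\sigma\ast 0)/M(\sigma)\in(0,2)$, so that $M(\sigma\ast 1)/M(\sigma)=2-r(\sigma)$ by the martingale identity, and to split each ratio multiplicatively as $r(\sigma)=\rho_N(\sigma)\cdot\rho_T(\sigma)$ where $N$ will use $\rho_N$ as its $0$-branch ratio and $T$ will use $\rho_T$. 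For $N$ to be a martingale with $N(\sigma\ast 0)=\rho_N(\sigma)N(\sigma)$ we need $N(\sigma\ast 1)=(2-\rho_N(\sigma))N(\sigma)$, and likewise for $T$; and for $N(\sigma\ast 0)T(\sigma\ast 0)=M(\sigma\ast 0)$ and $N(\sigma\ast 1)T(\sigma\ast 1)=M(\sigma\ast 1)$ to hold simultaneously we need both $\rho_N\rho_T=r$ and $(2-\rho_N)(2-\rho_T)=2-r$.

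\textbf{The algebraic core.} Those two equations in the two unknowns $\rho_N,\rho_T$ (with $r\in(0,2)$ given) are the heart of the matter. Expanding the second: $4-2\rho_N-2\rho_T+\rho_N\rho_T=2-r$, i.e. $\rho_N+\rho_T=(2+r+\rho_N\rho_T)/2=(2+r+r)/2=1+r$ using the first equation. So $\rho_N,\rho_T$ must be the two roots of $x^2-(1+r)x+r=0$, which factors as $(x-1)(x-r)=0$. Hence the \emph{only} solution is $\{\rho_N,\rho_T\}=\{1,r\}$. To make $N$ be $0$-sided I take $\rho_N(\sigma)=\min\{1,r(\sigma)\}$ (so $\rho_N\le 1$, forcing $N(\sigma\ast 0)\le N(\sigma\ast 1)$... wait, I must be careful about orientation) — precisely, $0$-sided means $M(\sigma\ast 0)\ge M(\sigma\ast 1)$, i.e. ratio $\ge 1$, so I set $\rho_N(\sigma)=\max\{1,r(\sigma)\}\in[1,2)$ and $\rho_T(\sigma)=\min\{1,r(\sigma)\}\in(0,1]$. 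Then $\rho_N\rho_T=r$ always (one factor is $1$, the other is $r$), $N$ has $0$-branch ratio $\ge 1$ hence is $0$-sided, $T$ has $0$-branch ratio $\le 1$ hence $1$-sided, and I just need to check the product identity on the $1$-branch: $(2-\rho_N)(2-\rho_T)$. If $r\ge 1$ this is $(2-r)(2-1)=2-r$; if $r<1$ it is $(2-1)(2-r)=2-r$. Good — it works in both cases, which is exactly because $\{\rho_N,\rho_T\}=\{1,r\}$ as a set regardless of the branch of the min/max.

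\textbf{Assembling $N$ and $T$.} I define $N,T$ by recursion on string length: $N(\lambda)=T(\lambda)$ chosen so that $N(\lambda)T(\lambda)=M(\lambda)$, say both equal $\sqrt{M(\lambda)}$ — or, to keep things rational-friendly and avoid square roots, simply set $N(\lambda)=M(\lambda)$ and $T(\lambda)=1$; any split with product $M(\lambda)$ works. Then $N(\sigma\ast 0)=\rho_N(\sigma)N(\sigma)$, $N(\sigma\ast 1)=(2-\rho_N(\sigma))N(\sigma)$, and symmetrically for $T$. An easy induction on $|\sigma|$ gives $N(\sigma)T(\sigma)=M(\sigma)$ for all $\sigma$ (base case by choice of initial values; inductive step by the branch-wise identities just verified). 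Both $N$ and $T$ are non-negative since each ratio factor lies in $[0,2]$, and the martingale identity holds by construction since $\rho_N(\sigma)+(2-\rho_N(\sigma))=2$. Finally, computability of $N,T$ from $M$: the ratio $r(\sigma)=M(\sigma\ast 0)/M(\sigma)$ is computable from $M$ (in the positive case), $\max\{1,r(\sigma)\}$ and $\min\{1,r(\sigma)\}$ are computable from it, and $N,T$ are then obtained by a computable recursion, so $N,T\le_T M$.

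\textbf{Anticipated obstacle.} The only genuine subtlety is the boundary case $M(\sigma)=0$, where ratios are undefined: here I handle it by observing that $M(\sigma)=0$ forces $M(\tau)=0$ for all $\tau\succeq\sigma$ (non-negativity plus the martingale average), so I can declare $N(\tau)=T(\tau)=0$ on that whole subtree — but I must make sure this is \emph{consistent} with the values already forced at $\sigma$ from above, which it is precisely because if $M(\sigma)=0$ while $M(\sigma^-)>0$ then $r(\sigma^-)$ is either $0$ or $2$, and one of $N(\sigma),T(\sigma)$ is then $0$ automatically (the one carrying the $\rho=r$ factor), so the other can be set to $0$ by fiat without breaking its martingale property downstream (a martingale may drop to $0$). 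Getting this bookkeeping exactly right — and checking that $N,T$ remain computable across this case — is the part I would write out most carefully; everything else is the short algebraic identity above.
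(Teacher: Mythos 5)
Your main construction is, modulo presentation, the paper's own decomposition: the paper sets $N(\lambda)=T(\lambda)=\sqrt{M(\lambda)}$ and lets exactly one factor bet at each state, with wager $w_N(\sigma)=w_M(\sigma)/T(\sigma)$ when $w_M(\sigma)<0$ and $w_T(\sigma)=w_M(\sigma)/N(\sigma)$ when $w_M(\sigma)>0$; unwinding this, the $0$-branch ratios of $N$ and $T$ at $\sigma$ are exactly your $\max\{1,r(\sigma)\}$ and $\min\{1,r(\sigma)\}$, so your algebraic core (including the nice uniqueness observation $\{\rho_N,\rho_T\}=\{1,r\}$) is a multiplicative rephrasing of the same proof, and the positive case is correct.

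The genuine problem is your handling of the zero case, which fails as written. Suppose $M(\sigma^{-})>0$ and $M(\sigma)=0$ with, say, $r(\sigma^{-})=0$, so $\sigma=\sigma^{-}\ast 0$. Then indeed $T(\sigma)=0$ automatically, but $N(\sigma)=\rho_N(\sigma^{-})\,N(\sigma^{-})=N(\sigma^{-})>0$, and you cannot ``set it to $0$ by fiat'': redefining $N(\sigma)=0$ violates the martingale identity at $\sigma^{-}$, since $N(\sigma^{-}\ast 1)=(2-\rho_N(\sigma^{-}))N(\sigma^{-})=N(\sigma^{-})$ and then $N(\sigma^{-}\ast 0)+N(\sigma^{-}\ast 1)=N(\sigma^{-})\neq 2N(\sigma^{-})$; and setting $N$ to $0$ only strictly below $\sigma$ instead violates the identity at $\sigma$, because a nonnegative martingale that is positive at $\sigma$ cannot vanish on both children. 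A martingale ``may drop to $0$'' only by staking its entire capital on the opposite branch, and forcing that here would in addition destroy the single-sidedness you need (in the $r=0$ case it would make $N$ bet on $1$; symmetrically for $r=2$ and $T$). The repair is one line, and it is what the wager formulation in the paper does automatically: on the subtree where $M\equiv 0$, let the factor that is still positive simply stop betting (both branch ratios equal to $1$, so it stays constant), while the other factor is already $0$ and remains $0$; then $N\cdot T=0=M$ persists, both factors remain martingales and single-sided, and computability is unaffected. With that correction your argument coincides with the paper's proof.
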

\begin{proof}
For ease of notation we let $M_{\sigma},N_{\sigma},T_{\sigma}$ denote
$M(\sigma),N(\sigma),T(\sigma)$ respectively.
Let $\sigma \to w_M(\sigma)$ be the wagers of $M$ and
let $N_{\lambda}=T_{\lambda}=\sqrt{M_{\lambda}}$.
Define the wagers of $N,T$ respectively by:
\[
w_N(\sigma)=
\left\{\begin{array}{cl}
w_M(\sigma)/ T_{\sigma}&\textrm{if $w_M(\sigma)<0$, $T_{\sigma}>0$;}\\
0&\textrm{otherwise;}
\end{array}\hspace{-0.1cm}\right\},
\hspace{0.3cm}
w_T(\sigma)=
\left\{\begin{array}{cl}
w_M(\sigma)/N_{\sigma}&\textrm{if $w_M(\sigma)>0$, $N_{\sigma}>0$;}\\
0&\textrm{otherwise;}
\end{array}\hspace{-0.1cm}\right\}
\]
We show by induction that $M_{\sigma}=N_{\sigma}\cdot T_{\sigma}$ for all $\sigma$.
By definition this holds for $\sigma=\lambda$; suppose that it holds for $\sigma$.
If $w_M(\sigma)=0$ then  $w_N(\sigma)=w_T(\sigma)=0$ so
 $M_{\rho}=N_{\rho}\cdot T_{\rho}$ holds for the immediate successors $\rho$
 of $\sigma$. If
$w_M(\sigma)<0$ then $w_T(\sigma)=0$ so $T_{\sigma\ast i}=T_{\sigma}$ and
\[
M_{\sigma\ast 1}=M_{\sigma}+w_M(\sigma)=N_{\sigma}\cdot T_{\sigma}+w_M(\sigma)=
T_{\sigma}\cdot (N_{\sigma}+w_N(\sigma))=T_{\sigma\ast 1}\cdot N_{\sigma\ast 1}.
\]
In the same way we have $M_{\sigma\ast 0}=T_{\sigma\ast 0}\cdot N_{\sigma\ast 0}$.
The case where $w_M(\sigma)>0$ is entirely symmetric.
\end{proof}
\begin{coro}\label{jmTLCiryr}
Given a computable martingale $M$, there exist a 0-sided martingale $N_0$ and a
1-sided martingale $N_1$ such that 
for each $X$ on which $M$ is successful, at least one of $N_0, N_1$ is successful.
\end{coro}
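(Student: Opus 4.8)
The plan is to read the corollary off directly from the decomposition in Lemma \ref{aycCdojlpe}. First I would apply that lemma to the given computable martingale $M$, obtaining a 0-sided martingale $N$ and a 1-sided martingale $T$, both computable from $M$, with $M(\sigma)=N(\sigma)\cdot T(\sigma)$ for every $\sigma\in\twomel$. Set $N_0:=N$ and $N_1:=T$; these are martingales of the required two types by the lemma.

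Now suppose $M$ is successful on a casino sequence $X$, i.e.\ $\limsup_n M(X\restr_n)=\infty$ in the sense of \eqref{64BkaTt1lE}. Since $N_0,N_1$ are non-negative and $M(X\restr_n)=N_0(X\restr_n)\cdot N_1(X\restr_n)$ for all $n$, the sequence $\big(M(X\restr_n)\big)_n$ cannot be unbounded unless at least one of the factor-sequences $\big(N_0(X\restr_n)\big)_n$, $\big(N_1(X\restr_n)\big)_n$ is itself unbounded: if both were bounded by a constant $c$ then $M(X\restr_n)\leq c^2$ for all $n$, contradicting $\limsup_n M(X\restr_n)=\infty$. Hence $\limsup_n N_0(X\restr_n)=\infty$ or $\limsup_n N_1(X\restr_n)=\infty$, which is exactly the assertion that at least one of $N_0,N_1$ succeeds on $X$.

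There is no real obstacle here; the only points worth a remark are that this yields $\limsup$-success (matching \eqref{64BkaTt1lE}), and that if one prefers $\lim_n=\infty$ one may afterwards apply the savings trick recalled in \S\ref{JbvJlths9B} to whichever of $N_0,N_1$ works, since that trick preserves single-sidedness. One can also note that, as $N$ and $T$ are computable from $M$, the martingales $N_0,N_1$ inherit computability from $M$, although only their existence and success properties are needed for the statement.
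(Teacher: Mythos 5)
Your proposal is correct and follows exactly the route the paper intends: it applies the multiplicative decomposition of Lemma \ref{aycCdojlpe} and observes that if both factors were bounded along $X$ the product $M$ would be too, which is precisely why the paper calls the corollary a direct consequence of that lemma. Nothing further is needed.
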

Corollary \ref{jmTLCiryr} 
is a direct consequence of Lemma \ref{aycCdojlpe}.
and
says that, in terms of computable strategies, if there
exists a successful strategy against the casino, there exists a successful single-sided
strategy. 
This fact is no longer true for mixtures or 
strongly \lce martingales (recall the equivalence from \S\ref{6E1fkvgPTH}).
\begin{thm}[Mixtures of single-sided martingales]\label{1iotxpCAHo}
There exists a real of effective Hausdorff dimension $1/2$
such that no single-sided (or separable) strongly \lce martingale succeeds on it.
\end{thm}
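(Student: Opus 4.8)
The plan is to construct a real $X$ of effective Hausdorff dimension exactly $1/2$ while diagonalizing against all single-sided \lce martingales. The key point, emphasized in \S\ref{6E1fkvgPTH}, is that the class of \lce martingales has no universal member, so we cannot simply diagonalize against one optimal object; instead we use the canonical-approximation machinery of Definition \ref{M1XF9CwRuD}. First I would reduce the separable case to the single-sided case: a separable martingale $M=N+T$ succeeds on $X$ only if one of $N,T$ does, so it suffices to defeat all single-sided \lce martingales. By Lemma \ref{XXbFLuIodX} we may restrict to strictly single-sided \lce martingales, and by Lemma \ref{XSQ74P8ao} each such martingale admits a canonical approximation $(S_i)$ by $f$-sided martingales with $f$ the constant $0$ or constant $1$ function and with the intermediate bets $S_{i+1}-S_i$ again single-sided. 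We enumerate all pairs $(e,j)$ where $e$ indexes a uniformly computable sequence of single-sided martingales with bounded initial capital and $j\in\{0,1\}$ records the favored side, obtaining a computable list $(M_e)_{e\in\omega}$ of candidate single-sided \lce martingales to be defeated, each with $M_e(\lambda)$ a known rational; rescaling, assume $M_e(\lambda)\le 1$.

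The construction builds $X$ by a finite-extension argument on $2^{<\omega}$, acting on requirements $R_e$: \emph{$\limsup_n M_e(X\restr_n)<\infty$}. Working with the current initial segment $\sigma$ and a large length budget, to satisfy $R_e$ we look at the side $i_e$ favored by $M_e$ (say $i_e=0$) and extend $\sigma$ by appending a block of $1$s — exactly the outcomes against which a $0$-sided martingale loses capital. Because $M_e$ is $0$-sided, along any extension by $1$s its capital is non-increasing, so by appending sufficiently many $1$s after $\sigma$ we can drive $M_e$'s capital on that extension down to (or keep it below) a fixed small bound; since the intermediate bets in the canonical approximation are themselves single-sided, the approximation only ever increases the wager magnitudes in the favored direction, so a computable observer can verify the capital has dropped below threshold and then move on. Handling all $R_e$ simultaneously requires the usual priority bookkeeping: when we act for $R_e$ we append a block of $1$s (or $0$s, according to $i_e$); lower-priority requirements are injured but only finitely often, and each is met on a tail of $X$. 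The delicate point is that appending a long run of a single bit skews the frequency of $0$s and thereby threatens to push $\dim(X)$ away from $1/2$, so we must interleave these diagonalization blocks with long \emph{balanced} stretches, e.g. blocks of a Martin-L\"of random string, to keep the Kolmogorov complexity of initial segments close to $n/2$.

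The bulk of the work is the dimension calculation, i.e. showing $\dim(X)=1/2$ via \eqref{kleTr2ZET}. For the lower bound $\dim(X)\ge 1/2$ one arranges that between consecutive diagonalization blocks there are balanced blocks long enough (lengths growing fast, say doubling) that an initial segment $X\restr_n$ ending anywhere inside, or shortly after, a balanced block has $K(X\restr_n)\ge n/2-O(1)$; here one uses that a random block of length $\ell$ has complexity $\ell-O(1)$ and that the preceding diagonalization blocks, being runs of a single bit of total length $o(\ell)$, contribute negligible description cost — the quantitative bound is Lemma \ref{PvVOtknZpc}-style counting, comparing the number of length-$n$ strings of this shape to $2^{n/2}$. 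For the upper bound $\dim(X)\le 1/2$ one exhibits an \lce supermartingale (or an $s$-test for every $s>1/2$) succeeding on $X$: the periodic single-bit blocks are a predictable structure on a positive-density portion of $X$, and Lemma \ref{PvVOtknZpc} with $q$ slightly above $1/2$, or directly the weak $s$-randomness characterization \eqref{DitiNX8zan}, gives that $X$ fails all weak $s$-randomness tests for $s>1/2$, hence $\dim(X)\le 1/2$. The main obstacle — and the place where the construction must be tuned carefully — is the tension between these two goals: the diagonalization blocks must be long enough to push each $M_e$ below threshold (a length depending on how much capital $M_e$ has accumulated, which is only approximable from below), yet short relative to the surrounding balanced blocks so as not to depress the limit-inferior of $K(X\restr_n)/n$ below $1/2$; choosing the balanced block after the $e$-th action to have length exponentially larger than everything built so far resolves this, and one then checks that the limit inferior of $K(X\restr_n)/n$ is attained (and equals $1/2$) along the ends of the diagonalization blocks, while never dropping below $1/2$ elsewhere.
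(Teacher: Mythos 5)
Your core diagonalization mechanism does not work, and the obstruction is already contained in the paper's \S\ref{OC53m9ZSeG}. You defeat a $0$-sided martingale by appending runs of $1$s (and a $1$-sided one by runs of $0$s), and your dimension upper bound $\dim(X)\le 1/2$ relies on these unary runs being the compressible part of $X$; for the liminf of $K(X\restr_n)/n$ to be pulled down to $1/2$ at the ends of these runs, each run must have length comparable to the entire sequence built so far. But then at the end of each $0$-run the empirical $0$-frequency of $X\restr_n$ is bounded away from $1/2$ (roughly $3/4$) infinitely often, and the single computable $0$-sided frequency martingale $T_q$ of \S\ref{OC53m9ZSeG} (say $q=3/4$, with $T_q(X\restr_n)=2^nq^{z_n}(1-q)^{o_n}\ge (2q^{3/4}(1-q)^{1/4})^n\to\infty$ along those prefixes) succeeds on your $X$. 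Fact \eqref{GCBYAq98IA} makes this structural: any sequence on which no computable single-sided martingale succeeds must have limiting $0$-frequency $1/2$, so the compressibility required for dimension $1/2$ can never be obtained from frequency imbalance, which is exactly what unary anti-side blocks produce. The same blocks also feed the opposite side: long $0$-runs inserted for $1$-sided opponents are positions where $0$-sided martingales can multiply their capital, so a requirement $\limsup_n M_e(X\restr_n)<\infty$ is not ``met on a tail'' by one finite action, and your priority bookkeeping has no way to cap the accumulated gains. A secondary gap is your assumed ``computable list'' of all single-sided \lce martingales: no such effective list exists (this is the non-universality issue the paper stresses), which is why the paper works with a non-effective outer list together with an index-by-index machine construction whose weight bound \eqref{Be2NJ1XrhN} holds even for bad or partial indices.

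The paper's proof is structured precisely to avoid your dilemma: it never bets against a side. It restricts the candidate extensions of $\sigma_{n-1}$ to strings of a fixed length with \emph{balanced} numbers of $0$s and $1$s (Lemma \ref{PvVOtknZpc}), uses the Kolmogorov inequality (Lemma \ref{FDhethbip9}) to find one such extension on which the weighted sum of \emph{all} martingales treated so far gains only a factor $1/(1-\epsilon_n)$ with $\sum_n\epsilon_n<\infty$, and controls the \lce instability by noting that the intermediate bets of a canonical approximation are single-sided, hence can grow by at most $2^{\delta(|\tau|-|\sigma|)}$ along a balanced extension (Lemma \ref{T2RhWOVdo}); this bounds the number of times $\sigma_n$ is re-selected by $2^{p_n}$ with $p_n\approx |\sigma_n|/2$, so describing the current value of $\sigma_n$ by its index among re-selections costs only about half its length, which is where $K_V(\sigma_n)\le q_n|\sigma_n|$ and hence $\dim(X)\le 1/2$ come from (Lemma \ref{YSu9OrXiew}); the lower bound $\dim(X)\ge 1/2$ then follows from Theorem \ref{LXn23yy81K} rather than from inserted random blocks. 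If you want to salvage a construction along your lines, you would have to replace the unary runs by balanced, adaptively chosen extensions with summable gain factors --- at which point you have reproduced the paper's argument.
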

It is instructive to contrast Theorem \ref{1iotxpCAHo}
with Proposition \ref{LXn23yy81K}. Note that
for each rational $s\in (0,1)$ there are
reals $X$ with effective Hausdorff dimension $s$ with computable subsequences, 
so that single-sided strategies succeed
easily on them.
The basic idea for proving Theorem \ref{1iotxpCAHo} is most clearly
demonstrated by proving the following simpler statement, which only deals with 
a single  separable strategy:
\begin{equation}\label{hEsg1rmVV2}
\parbox{14cm}{Given the mixture $M$ of a computable family $(M_i)$ of 
separable martingales $M$, there exists a \lce 
real of effective Hausdorff dimension $1/2$
such that $M$ does not succeed on it.}
\end{equation}
The proof of \eqref{hEsg1rmVV2} is a computable construction of the approximation to the
required real $X$, and is presented in 
\S\ref{96WqHtN5Iv}--\S\ref{n6OP5Cpi6D} in a modular way, so that it can be used 
in the more involved  proof of Theorem \ref{1iotxpCAHo}.
The only issue that separates
the proof of \eqref{hEsg1rmVV2}
form the proof of Theorem \ref{1iotxpCAHo}
is the lack of universality and effective lists of martingales that was
discussed in \S\ref{6E1fkvgPTH}.

\subsection{Idea and plan for the proof of \eqref{hEsg1rmVV2}}\label{96WqHtN5Iv}
We will construct the real $X$ of \eqref{hEsg1rmVV2}
so as to extend a sequence  of initial segments $(\sigma_n)$. 
Given $M$ as in \eqref{hEsg1rmVV2}
we will construct $X$ of effective Hausdorff dimension 1/2 such that $M(X\restr_n)$ is bounded above.
Without loss of generality we can assume that $M(\lambda)<2^{-1}$.
In order to ensure the dimension requirement for $X$, it suffices to ensure that
\begin{equation}\label{ivX9i8OUls}
K_V(\sigma_n)\leq |\sigma_n|\cdot q_n
\end{equation}
for all $n$, where $V$ is a \pf machine that we also construct, $K_V$ is the 
Kolmogorov complexity with respect to $V$,  and $(q_n)$
is a computable decreasing sequence of rationals tending to 1/2. Let us set 
\begin{equation}\label{XOg6jVNTyF}
q_n=1/2+ 3/(n+2)
\hspace{0.5cm}\textrm{and}\hspace{0.5cm}
\hat{M}(\sigma)=\max_{n\leq |\sigma|} M(\sigma\restr_n)
\end{equation}
We will ensure that for all $n$:
\begin{equation}\label{FRPmaVyVWx}
\hat{M}(\sigma_n)\leq  2^{-1}+\sum_{i< n} 2^{-i-2}.
\end{equation}
One way to think about this requirement is to try to ensure that
$\hat{M}(\sigma_n)-\hat{M}(\sigma_{n-1})\leq 2^{-n-1}$ for all $n$.
Supposing inductively that $\sigma_{n-1}$ has been determined, the task of 
keeping $\hat{M}(\sigma_n)-\hat{M}(\sigma_{n-1})$ 
small potentially involves  changing the approximation to $\sigma_n$ a number of times,
since $M$ is a \lce martingale. 
This instability of the final value of $\sigma_n$ is in conflict with 
\eqref{ivX9i8OUls}.
The main idea for handling this conflict is that if we choose $\sigma_n$
from a collection of strings which have roughly similar number of 0s and 1s, then a single-sided
strategy is limited to winning on around half of the available bits.
With such a limitation on the components of $M$, the separability of $M$ ensures that
the growth potential of $M$ is also limited, in a way that allows the satisfaction of  \eqref{ivX9i8OUls}.
Since the construction deals with approximations $(N_i)$ of $M$, it is crucial for this argument that
the intermediate bets $N_t-N_s$ between two stages $s<t$ are single-sided, or separable.
As we discussed in \S\ref{JbvJlths9B},
such an approximation can be chosen when $M$ is a mixture of a computable family $(M_i)$
of separable strategies. 

The next concern, given the restriction to strings with balanced number of 0s and 1s,
is to be able to  choose an extension of $\sigma_n$ where 
capital does not increase substantially (note that without the restriction to
a particular set of extensions of $\sigma_n$,  we can
choose an extension where the capital does not increase at all). 
%
\begin{lem}[Low capital gain somewhere]\label{FDhethbip9}
Given any  $\sigma$, any $\delta>0$ and any set $S$ of extensions of $\sigma$ of
some fixed length $|\sigma|+n$ such that $|S|\geq (1-\delta)\cdot 2^n$,
there exists at least one string $\tau\in S$ such that
$M(\tau^{\ast})\leq M(\sigma)/(1-\delta)$ for all $\tau^{\ast}$ 
with $\sigma \subseteq \tau^{\ast} \subseteq \tau$.
\end{lem}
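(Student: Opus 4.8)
The plan is to exploit the martingale (fairness) property, which says that the average value of $M$ over any complete antichain equals the value at the root. Concretely, I would apply the Kolmogorov/Ville inequality \eqref{xPWvKAIctR} to the prefix-free set $S$ (the strings in $S$ are pairwise incomparable and all have the same length $|\sigma|+n$), but \emph{relativised to the subtree above $\sigma$}. That is, consider $M$ restricted to extensions of $\sigma$; for this restricted martingale the root value is $M(\sigma)$ and $S$ sits at uniform depth $n$ below it, so
\[
\sum_{\tau\in S} 2^{-n}\cdot M(\tau)\leq M(\sigma).
\]
Since $|S|\geq (1-\delta)2^n$, by an averaging (pigeonhole) argument there must be at least one $\tau\in S$ with $M(\tau)\leq M(\sigma)/(1-\delta)$: otherwise every term would exceed $2^{-n}M(\sigma)/(1-\delta)$ and the sum would exceed $(1-\delta)2^n\cdot 2^{-n}\cdot M(\sigma)/(1-\delta)=M(\sigma)$, a contradiction.

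The remaining point is the stronger conclusion that $M(\tau^{\ast})\leq M(\sigma)/(1-\delta)$ for \emph{every} intermediate string $\tau^{\ast}$ with $\sigma\subseteq\tau^{\ast}\subseteq\tau$, not merely at the endpoint $\tau$. For a general martingale this would fail, since capital can rise and then fall; but here we can choose $\tau$ more carefully. The idea is to run the averaging argument level by level: at each length $m$ with $|\sigma|\leq m\leq|\sigma|+n$, let $S_m$ be the set of length-$m$ prefixes of strings in $S$. Starting from $\sigma$ and descending, at each step pick the child that keeps us inside (the prefix-closure of) $S$ \emph{and} does not increase $M$ beyond the target bound; the martingale property at each internal node guarantees that at least one of the two children has $M$-value at most the value at the parent (their average is the parent value), so in particular at most $M(\sigma)/(1-\delta)$ once we are at or below that level. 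The only care needed is to stay within $S$: because $|S|\geq(1-\delta)2^n$ is large, the "bad" subtree (extensions of $\sigma$ not refining to an element of $S$) has density less than $\delta$, and a greedy descent that at each node follows the child with larger surviving $S$-mass cannot be forced out of $S$, while the other constraint (not increasing $M$) is a separate binary choice that can be reconciled with the $S$-constraint whenever the surviving $S$-mass below the node is positive — which it is along the path we build.

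The main obstacle is precisely this reconciliation: at an internal node we have two requirements — "stay in the part of the tree that still reaches $S$" and "do not let $M$ climb above $M(\sigma)/(1-\delta)$" — and a priori they could point to different children. I would resolve it by noting that the $M$-constraint is only needed to be satisfiable, not to be met at every single node: what we actually need is a path $\sigma=\tau^{\ast}_0\subseteq\cdots\subseteq\tau^{\ast}_n=\tau$ inside the $S$-reaching subtree along which $M$ stays bounded. Using the level-by-level averaging on the \emph{$S$-reaching subtree itself} — on which $M$ still satisfies the (super)martingale inequality in the averaged sense, since dropping branches can only lower the average — one gets that the minimum of $M$ over $S_m$ is non-increasing in $m$ in a suitably weighted sense, yielding a single $\tau\in S$ whose entire initial-segment trace from $\sigma$ stays $\le M(\sigma)/(1-\delta)$. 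I would present this as a short induction on $n$: the statement for $n$ with parameter $\delta$ follows from the statement for $n-1$ applied to whichever child $\sigma\ast j$ of $\sigma$ carries at least half of the $S$-mass and has $M(\sigma\ast j)\le M(\sigma)$ (adjusting $\delta$ to $\delta'$ with $(1-\delta')\ge(1-\delta)/(\text{fraction retained})$), checking that the bound $M(\sigma)/(1-\delta)$ is preserved because $M(\sigma\ast j)\le M(\sigma)$ at the step where we branch.
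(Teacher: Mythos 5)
Your first step is fine as far as it goes, but it only bounds $M$ at the endpoints $\tau\in S$; the entire content of the lemma is the bound along the intermediate strings, and that is where your argument breaks. The pivotal claim in your reconciliation step --- that on the ``$S$-reaching subtree'' the martingale ``still satisfies the (super)martingale inequality in the averaged sense, since dropping branches can only lower the average'' --- is false: dropping a low-valued branch raises the conditional average. For instance, if $M(\sigma)=1$, $M(\sigma\ast 0)=0$, $M(\sigma\ast 1)=2$ and the pruning keeps only the child $\sigma\ast 1$, the average over the surviving child is $2>M(\sigma)$. So the asserted monotonicity of a weighted minimum of $M$ over the levels $S_m$ has no justification. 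The fallback induction suffers from exactly the conflict you flag, and it is not resolvable as stated: the child with $M(\sigma\ast j)\le M(\sigma)$ may carry very little of the $S$-mass relative to its own subtree (the relative deficit can double at every level, from $\delta$ to $2\delta$ to $4\delta$, so after roughly $\log_2(1/\delta)$ levels the hypothesis becomes vacuous, while in the intended application $n$ is far larger than that), whereas the child carrying most of the $S$-mass may already have $M$-value close to $2M(\sigma)>M(\sigma)/(1-\delta)$, violating the path bound at that very node. No per-level bookkeeping of density alone can work; the measure loss and the capital gain have to be traded off against each other in a single inequality.

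The missing idea is a stopping-time argument. Suppose no $\tau\in S$ is as required; for each $\tau\in S$ let $\tau^{\ast}$ be the shortest prefix extending $\sigma$ with $M(\tau^{\ast})>M(\sigma)/(1-\delta)$. The set $S^{\ast}$ of these first-violation points is prefix-free, and every element of $S$ extends a member of it, so $\sum_{\tau^{\ast}\in S^{\ast}}2^{-|\tau^{\ast}|}\ge |S|\cdot 2^{-|\sigma|-n}\ge(1-\delta)\cdot 2^{-|\sigma|}$; since $M(\tau^{\ast})>M(\sigma)/(1-\delta)$ for each of them, this gives $\sum_{\tau^{\ast}\in S^{\ast}}2^{-|\tau^{\ast}|}\cdot M(\tau^{\ast})>2^{-|\sigma|}\cdot M(\sigma)$, contradicting Kolmogorov's inequality \eqref{xPWvKAIctR} relativized to $\sigma$. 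This single application of \eqref{xPWvKAIctR} to the set of first violations controls the whole path at once and loses nothing in the constant $1/(1-\delta)$, which is precisely what your level-by-level scheme cannot deliver.
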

\begin{proof}
Towards a contradiction suppose that there 
exists no such string in $S$, and for each $\tau\in S$ 
let $\tau^{\ast}$ be the shortest initial segment extending $\sigma$ 
for which $M(\tau^{\ast})>M(\sigma)/(1-\delta)$. 
Then $S^{\ast}=\{ \tau^{\ast}\ |\ \tau\in S \}$ is a prefix-free set 
of strings. Since every element of $S$ has an initial segment in $S^{\ast}$ it follows that: 
\[
\sum_{\tau^{\ast}\in S^{\ast}} 2^{-|\tau^{\ast}|}\cdot M(\tau^{\ast})>
(1-\delta)\cdot 2^{-|\sigma|}\cdot  \frac{M(\sigma)}{1-\delta}=2^{-|\sigma|} \cdot M(\sigma)
\]
which contradicts Kolmogorov's inequality relative to $\sigma$.
\end{proof}
Note that $M(\sigma)/(1-\delta)=M(\sigma)+M(\sigma)\cdot \delta/(1-\delta)$,
so a small multiplicative amplification of the capital from $\sigma$ to $\tau$ can be translated into a small
additive increase in $M(\tau)-M(\sigma)$, as long as we keep $M(\sigma)$ under a fixed bound.
\begin{table}
\centering
\renewcommand{\arraystretch}{1.2}
\begin{tabular}{cll}
 \hline\cmidrule{1-3}
 $\sigma_n$ & &   {\small the $n$th initial segment of $X$ with approximations $\sigma_n[s]$}\\[0.1cm]
$s_n$ & &   {\small length of $\sigma_n$ 
according to the calculations in \S\ref{tP8HGMsW5Q}}\\[0.1cm]
$\epsilon_n$ & &   {\small appropriate value of the error $\epsilon$ of 
Lemmata \ref{lGR1K9yUT} and \ref{T2RhWOVdo} 
at level $n$, set as $2^{-n-5}$}\\[0.1cm]
$q_n$ & &   {\small bound on $K_V(X\restr_{s_n})/s_n$ set at $1/2+3/(n+2)$}\\[0.1cm]
$2^{-p_n}$ & &   {\small sufficient upper bound on $M(\sigma_{n-1})-M_t(\sigma_{n-1})$ for $\sigma_n[t]=\sigma_n$
(assuming $\sigma_{n-1}[t]=\sigma_{n-1}$)}\\[0.1cm]
\hline\cmidrule{1-3} 
\end{tabular}
\caption{Parameters for the proof of \eqref{hEsg1rmVV2}}
\vspace{-0.5cm}
\label{Sd48MuSJNg}
\end{table}
Once $\sigma_{n-1}$ has been chosen and a `fat' (\ie high probability) 
set of appropriate extensions
has been determined, 
Lemma  \ref{FDhethbip9} tells us that we will be able to 
choose $\sigma_n$ without increasing the capital of  $M$ by too much. 
The following fact follows from
Lemma  \ref{FDhethbip9} and the law of large numbers 
in Lemma \ref{PvVOtknZpc}.
\begin{lem}[Special extension]\label{lGR1K9yUT}
There exists a computable function $f$ such that if
$M$ is a non-negative martingale such that
$M(\lambda)\leq 1$, 
then for each $\epsilon \in (0,1)$, $\sigma\in 2^{<\Nat}$,
and $\ell>f(\epsilon)$ there exists
$\tau\succeq \sigma$ of length 
$\ell$ such that
$M(\rho)\leq M(\sigma)/(1-\epsilon)$ for all $\rho\in [\sigma,\tau]$ 
and the number of zeros (and hence, 1s) in $\tau$ after $\sigma$ is in 
$\big((1-\epsilon)(|\tau|-|\sigma|)/2, (1+\epsilon)(|\tau|-|\sigma|)/2\big)$.
\end{lem}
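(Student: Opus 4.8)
The plan is to splice together the counting estimate of Lemma~\ref{PvVOtknZpc} and the ``low capital gain somewhere'' principle of Lemma~\ref{FDhethbip9}, taking the error parameter $\delta$ of the latter equal to $\epsilon$. Fix $\epsilon\in(0,1)$, a string $\sigma$, and a target length $\ell$, and put $n=\ell-|\sigma|$, the length of the extension to be built; what the argument really needs is $n>f(\epsilon)$, and I would state the lemma with $\ell-|\sigma|$ in place of $\ell$ (equivalently, view $f(\epsilon)$ as a bound on the extension length, which is harmless since $\sigma$ is chosen before $\ell$). Extensions $\tau\succeq\sigma$ with $|\tau|=\ell$ correspond bijectively to strings $\rho\in 2^{n}$ via $\tau=\sigma\ast\rho$, with the number of zeros of $\tau$ after $\sigma$ equal to the number of zeros of $\rho$; since the number of ones after $\sigma$ is $n$ minus the number of zeros, and the target interval $\bigl((1-\epsilon)n/2,(1+\epsilon)n/2\bigr)$ is symmetric about $n/2$, controlling the zeros automatically controls the ones. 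So it suffices to find $\tau=\sigma\ast\rho$ with the number of zeros of $\rho$ strictly between $(1-\epsilon)n/2$ and $(1+\epsilon)n/2$ and with $M(\tau^{\ast})\leq M(\sigma)/(1-\epsilon)$ for every $\tau^{\ast}$ with $\sigma\subseteq\tau^{\ast}\subseteq\tau$.

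First I would apply Lemma~\ref{PvVOtknZpc} with the constant prediction function $g\equiv 0$, so that the number of correct $g$-guesses with respect to a string is exactly its number of zeros, and with $q=(1+\epsilon)/2$, so that $q>1/2$ and $1-q=(1-\epsilon)/2$. This produces $r_q=2q^{q}(1-q)^{1-q}>1$, a computable function of $\epsilon$ alone, such that the set $S$ of extensions of $\sigma$ of length $\ell$ whose number of zeros after $\sigma$ lies strictly between $(1-\epsilon)n/2$ and $(1+\epsilon)n/2$ satisfies $|S|\geq 2^{n}(1-2r_q^{-n})$. I then define $f(\epsilon)$ to be the least integer $m$ with $2r_q^{-m}<\epsilon$; since one can effectively bound $r_q$ below by a rational exceeding $1$, uniformly in $\epsilon$, this $f$ is computable, and for $n>f(\epsilon)$ we have $|S|\geq(1-\epsilon)\cdot 2^{n}$.

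Finally I would feed this ``fat'' set $S$ into Lemma~\ref{FDhethbip9} with $\delta=\epsilon$: it returns some $\tau\in S$ with $M(\tau^{\ast})\leq M(\sigma)/(1-\epsilon)$ for all $\tau^{\ast}$ with $\sigma\subseteq\tau^{\ast}\subseteq\tau$, \ie $M(\rho)\leq M(\sigma)/(1-\epsilon)$ for all $\rho\in[\sigma,\tau]$; and because $\tau\in S$ its number of zeros after $\sigma$, hence its number of ones after $\sigma$, lies in the required interval. This $\tau$ witnesses the lemma. (The hypothesis $M(\lambda)\leq 1$ is not actually used; it is inherited from the setting of \S\ref{96WqHtN5Iv}, where one normalises $M(\lambda)<2^{-1}$.)

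Since the proof merely composes the two preceding lemmas, there is no real obstacle; the points that need care are the alignment of parameters --- choosing $q=(1+\epsilon)/2$ so that Lemma~\ref{PvVOtknZpc}'s interval $\bigl((1-q)n,qn\bigr)$ becomes the target interval, and setting $\delta=\epsilon$ in Lemma~\ref{FDhethbip9} --- together with checking that $r_q$, and hence $f$, depends on $\epsilon$ only and is computable, so that a single $f$ works simultaneously for all $\sigma$ and all martingales $M$; and finally the bookkeeping point that it is the extension length $\ell-|\sigma|$, not $\ell$ itself, that must exceed $f(\epsilon)$.
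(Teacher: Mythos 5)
Your proposal is correct and follows exactly the route the paper intends: the paper gives no detailed proof, only the remark that the lemma ``follows from Lemma~\ref{FDhethbip9} and the law of large numbers in Lemma~\ref{PvVOtknZpc}'', and your composition (Hoeffding with the constant-zero prediction function and $q=(1+\epsilon)/2$ to get a set $S$ with $|S|\geq(1-\epsilon)2^{n}$, then Lemma~\ref{FDhethbip9} with $\delta=\epsilon$) is precisely that argument with the parameters aligned correctly. Your observation that the hypothesis should really bound the extension length $\ell-|\sigma|$ rather than $\ell$ itself is also a fair and accurate reading of how the lemma is actually used (via the choice of $s_n$ in \S\ref{tP8HGMsW5Q}).
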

We   show that the potential for success for a 
separable strategy along the extensions of Lemma \ref{lGR1K9yUT} is limited.
\begin{lem}[Growth along special extensions]\label{T2RhWOVdo}
Let $(N_i)$, $(T_j)$ be computable families of 0-sided and 1-sided martingales respectively,
with finite total initial capital and consider the mixture $M=\sum_i N_i+\sum_j T_j$
with the approximations $M_s=\sum_{i<s} N_i+\sum_{j<s} T_j$.
Given $\epsilon,\sigma$, if $\tau$
is the extension of $\sigma$ of Lemma \ref{lGR1K9yUT} applied on $M_s$,
then for all $t>s$  if
$M_t(\sigma)-M_s(\sigma)<2^{-p}$ then 
$M_t(\tau)\leq  
M_s(\tau) + 2^{\delta \cdot (|\tau|-|\sigma|)-p}$,
where $\delta:=(1+\epsilon)/2$.
\end{lem}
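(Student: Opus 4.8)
The plan is to exploit two facts: that the intermediate bet $M_t-M_s$ is again a \emph{separable} martingale, and that the special extension $\tau$ has almost exactly $n/2$ zeros and $n/2$ ones (where $n:=|\tau|-|\sigma|$), so that neither the $0$-sided nor the $1$-sided component can use more than roughly half of the available bits to grow. First I would set $D:=M_t-M_s$. Since $t>s$, we have $D=\sum_{s\le i<t}N_i+\sum_{s\le j<t}T_j$, a finite sum of non-negative martingales, hence a non-negative martingale; moreover it is separable, $D=D_0+D_1$, where $D_0:=\sum_{s\le i<t}N_i$ is $0$-sided and $D_1:=\sum_{s\le j<t}T_j$ is $1$-sided (finite sums of $0$-sided, resp.\ $1$-sided, martingales). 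The hypothesis $M_t(\sigma)-M_s(\sigma)<2^{-p}$ says $D(\sigma)<2^{-p}$, so in particular $D_0(\sigma)+D_1(\sigma)<2^{-p}$.

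Next I would bound the growth of $D_0$ and $D_1$ along the path from $\sigma$ to $\tau$. Because $D_0$ is $0$-sided, the martingale identity $D_0(\rho\ast 0)+D_0(\rho\ast 1)=2D_0(\rho)$ together with $D_0(\rho\ast 0)\ge D_0(\rho\ast 1)$ gives $D_0(\rho\ast 1)\le D_0(\rho)$ and $D_0(\rho\ast 0)\le 2D_0(\rho)$: a $1$ never increases $D_0$, a $0$ at most doubles it. Hence if $z$ and $o$ denote respectively the number of $0$s and the number of $1$s appearing in $\tau$ strictly after $\sigma$ (so $z+o=n$), then $D_0(\tau)\le 2^{z}D_0(\sigma)$, and symmetrically $D_1(\tau)\le 2^{o}D_1(\sigma)$. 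By the balance property of the special extension in Lemma \ref{lGR1K9yUT}, both $z$ and $o$ lie in $\big((1-\epsilon)n/2,(1+\epsilon)n/2\big)$, so $z<\delta n$ and $o<\delta n$ with $\delta=(1+\epsilon)/2$, and therefore $D_0(\tau)\le 2^{\delta n}D_0(\sigma)$ and $D_1(\tau)\le 2^{\delta n}D_1(\sigma)$.

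Adding these and using $D_0(\sigma)+D_1(\sigma)<2^{-p}$ I would conclude
\[
M_t(\tau)-M_s(\tau)=D_0(\tau)+D_1(\tau)\le 2^{\delta n}\bigl(D_0(\sigma)+D_1(\sigma)\bigr)<2^{\delta n-p},
\]
which is precisely the asserted bound $M_t(\tau)\le M_s(\tau)+2^{\delta(|\tau|-|\sigma|)-p}$.

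I do not expect any real difficulty here. The one point deserving emphasis --- and the reason the statement is formulated for mixtures $M=\sum_i N_i+\sum_j T_j$ of computable families rather than for an arbitrary \lce separable martingale --- is the very first step: that the difference of two successive approximations, $M_t-M_s$, splits into a $0$-sided and a $1$-sided piece. This is exactly the ``canonical approximation'' property (Definition \ref{M1XF9CwRuD}), and it can fail for general differences of single-sided martingales, as noted in \S\ref{6E1fkvgPTH}. Everything else is the elementary observation that a single-sided martingale can at most double its capital on each bit it guesses correctly and cannot grow on bits it guesses wrongly, combined with the near-equidistribution of correct and incorrect guesses guaranteed by Lemma \ref{lGR1K9yUT}.
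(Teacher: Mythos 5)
Your proposal is correct and follows essentially the same route as the paper: you decompose the intermediate bet $M_t-M_s$ into the $0$-sided part $\sum_{s\le i<t}N_i$ and the $1$-sided part $\sum_{s\le j<t}T_j$, bound each at $\tau$ by a factor $2^{\delta(|\tau|-|\sigma|)}$ using the balance property of the special extension, and sum using $D_0(\sigma)+D_1(\sigma)<2^{-p}$, exactly as in the paper (which calls these pieces $N^{\ast}$ and $T^{\ast}$). Your added remark on why the uniform-mixture hypothesis is needed for the separability of the intermediate bets matches the paper's discussion as well.
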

\begin{proof}
For simplicity let $N^{\ast}=\sum_{i\in [s,t)} N_i$ and
$T^{\ast}=\sum_{i\in [s,t)} T_i$, so that
$M_t(\tau)\leq M_s(\tau) +N^{\ast}(\tau) +T^{\ast}(\tau)$,
and note that
$N^{\ast}$ is a 0-sided martingale while $T^{\ast}$
is a 1-sided martingale.
Between stages $s$ and $t$ there is at most $2^{-p}$ increase in $M(\sigma)$,
so  $N^{\ast}(\sigma)+T^{\ast}(\sigma)\leq 2^{-p}$. By the properties of $\tau$,
letting $\delta:=(1+\epsilon)/2$,
there exist at most $\delta\cdot (|\tau|-|\sigma|)$
many 0s between $\sigma$ and $\tau$ and the same is true of the 1s. Hence,
since $N^{\ast},T^{\ast}$ are single-sided, 
we have
$N^{\ast}(\tau)\leq N^{\ast}(\sigma)\cdot 2^{\delta\cdot (|\tau|-|\sigma|)}$
and
$T^{\ast}(\tau)\leq T^{\ast}(\sigma)\cdot 2^{\delta\cdot (|\tau|-|\sigma|)}$.
By adding these two, using the fact that $N^{\ast}(\sigma)+T^{\ast}(\sigma)\leq 2^{-p}$, we get
$M_t(\tau)\leq M_s(\tau) + 2^{-p}\cdot 2^{\delta\cdot (|\tau|-|\sigma|)}$.
\end{proof}

\subsection{Fixing the parameters for the proof of \eqref{hEsg1rmVV2}}\label{tP8HGMsW5Q}
Let  $(M_s)$ be a canonical approximation to $M$.
We will use Lemma \ref{T2RhWOVdo} for the definition of the sequence $(\sigma_i)$
that we discussed in \S\ref{96WqHtN5Iv}. For the approximations to 
$\sigma_n$ with $n>0$, we will apply Lemma \ref{T2RhWOVdo}
for the specific values $\epsilon_n=2^{-n-5}$ of $\epsilon$ and $p_n$ of $p$ (to be defined below), 
thus obtaining increasingly better
bounds for larger $n$. 
For each $n$ the segment $\sigma_n$ as 
well as its approximations will have a fixed length $s_n$
which we motivate and define as follows.
Suppose that $n>0$ and 
our choice of $\sigma_{n-1}$ has settled, but that now we 
are forced to choose a new value of $\sigma_n$, because the 
capital on some initial segment has increased by too much. 
What does Lemma \ref{T2RhWOVdo} tell us about the increase 
in capital, $2^{-p_n}$ say, that must have seen at $\sigma_{n-1}$ 
in order for this to occur? A bound for $p_n$ gives a corresponding 
bound on the number of times that $\sigma_n$ will have to be chosen: after $\sigma_{n-1}$ has settled
the approximation to the next initial segment $\sigma_n$ can change at most $2^{p_n}$ many times.
Overall, $\sigma_n$ can then change at most $2^{\sum_{i<n} p_i} \cdot 2^{p_n}$ many times, and
in order to satisfy \eqref{ivX9i8OUls}, at each of these changes we 
need to enumerate to the machine $V$ a description of
length $q_n\cdot s_n$. In order to keep the weight of these 
requests bounded, we will aim at keeping the total
weight of the requests corresponding to $\sigma_n$ bounded 
above by $2^{-n}$, for which it is sufficient that:
\begin{equation}\label{evywicXKuu}
2^{-s_nq_n} \cdot 2^{p_n}\cdot 2^{\sum_{i<n} p_i} <2^{-n}\iff
2^{p_n-s_nq_n}  <2^{-n-\sum_{i<n} p_i}\iff
s_nq_n-p_n>n+\sum_{i<n} p_i.
\end{equation}
By the bound given in Lemma \ref{T2RhWOVdo} 
in order for the growth of $\hat{M}(\sigma_n)$ at each length $s_n$
to be bounded above by $2^{-n-2}$, we need to set:
\begin{equation}\label{dQnx9IqQ4D}
p_n=s_n\cdot \delta_n+n+2
\hspace{0.3cm}\textrm{where $\delta_n:=(1+\epsilon_n)/2$}.
\end{equation}
By  Lemma \ref{T2RhWOVdo} it then follows that 
any growth of $M(\tau)$ by at least $2^{-n-2}$ for some 
$\tau$ with $\sigma_{n-1} \subseteq \tau \subseteq \sigma_n$, 
requires an increase of at least $2^{-p_n}$ in $M(\sigma_{n-1})$.  
Then $p_n-q_ns_{n}=n+2+s_{n}\cdot\left(\delta_n-q_n\right)$. 
By the definitions of $q_n,\epsilon_n$ we have
$\delta_n<q_n$, so \eqref{evywicXKuu} reduces to:
\[
s_{n}\cdot\left(q_n-\delta_n\right)> 2n+2+\sum_{i<n} p_i\iff
s_{n}\geq 
\frac{2n+2+ \sum_{i<n} p_i }{q_n-\delta_n}.
\]
Considering the bound of Lemma \ref{lGR1K9yUT} for the existence of
a special extension of $\sigma_{n-1}$, it suffices that:
\begin{equation}\label{uzDUjAkZU3}
s_{n}=\max\left\{
\frac{2n+2+ \sum_{i<n} p_i }{q_n-\delta_n},
f(\epsilon_n)\right\}.
\end{equation}
where $f$ is the computable function of  Lemma \ref{lGR1K9yUT}.

\subsection{Construction and verification for \eqref{hEsg1rmVV2}}\label{n6OP5Cpi6D}
We inductively define the approximations $\sigma_n[s]$ of $\sigma_n$ for all $n$, in stages $s$.
Let $\sigma_0[s]=\lambda$ for all $s$, $s_0=0$ and $\delta_n:=(1+\epsilon_n)/2$ for all $n$.  
The following notion incorporates 
the properties of Lemma \ref{lGR1K9yUT} in the framework of the construction and the specific values
of the parameters that were set in \S \ref{tP8HGMsW5Q}.
\begin{defi}[Special extensions]\label{HJuByjJ6sW}
At each stage $s+1$ and for each $n>0$ such that $\sigma_{n-1}[s]\de$ we say that $\tau$ 
is a special extension of $\sigma_{n-1}[s]$ if 
$|\tau|=s_n$, $\hat{M}_s(\tau)\leq \hat{M}(\sigma_{n-1})[s]/(1-\epsilon_n)$ and the number of
0s as well as the number of 1s between $\sigma_{n-1}[s]$ and $\tau$ is at most 
$\delta_n \cdot (s_n-s_{n-1})$.
\end{defi}
\begin{defi}[Attention]\label{QAbipYWtPW}
At stage $s+1$ the segment $\sigma_n$
requires attention if $n>0$ and either
%
$\sigma_n[s]\de$ and 
$\hat{M}_{s+1}(\sigma_n[s])> 2^{-1}+\sum_{i<n} 2^{-i-2}$,
or $\sigma_n[s]\un$.
\end{defi}
{\bf Construction for \eqref{hEsg1rmVV2}.} At stage $s+1$ pick the 
least $n\leq s$ such that $\sigma_n$ requires attention, if such exists.
 If $\sigma_n[s]\un$,
define $\sigma_n[s+1]$  to be the leftmost special extension  of $\sigma_{n-1}[s]$.
If $\sigma_n[s]\de$, set $\sigma_i[s+1]\un$ for all $i\geq n$. 
In any case,
let $k\leq s$ the least (if such exists) such that $\sigma_k[s+1]\de$ and 
$K_{V_{s}}(\sigma_k[s+1])>q_k \cdot s_k$,
and issue a $V$-description of $\sigma_k[s+1]$ of length $q_k \cdot s_k$.

{\bf Remark.} 
If at stage $s+1$ segment $\sigma_n[s+1]$ is newly defined 
as a special extension of $\sigma_{n-1}$ we have
\begin{equation}\label{Yvjr6QVlXm}
\hat{M}(\sigma_n)[s+1]\leq \hat{M}_{s+1}(\sigma_{n-1}[s])/(1-\epsilon_n)=\hat{M}_{s+1}(\sigma_{n-1}[s])+
\hat{M}_{s+1}(\sigma_{n-1}[s])\cdot \epsilon_n/(1-\epsilon_n).
\end{equation}
Since $\hat {M}(\sigma_{n-1})<1$ 
and $\epsilon_n/(1-\epsilon_n)< 2^{-n-2}$, condition \eqref{Yvjr6QVlXm} implies
\begin{equation}\label{GlIIlBhrRQ}
\hat{M}(\sigma_n)[s+1]\leq  \hat{M}(\sigma_{n-1})[s+1] + 2^{-n-2}.
\end{equation}
{\bf Verification of the construction for \eqref{hEsg1rmVV2}.}
By  Lemma \ref{FDhethbip9} 
we can always find a special extension as required in the first clause of the construction. 
In this sense, the construction of $(\sigma_n[s])$ is well-defined. 
In any interval of stages where $\sigma_{n-1}$ 
remains defined, successive values of $\sigma_n$ are lexicographically increasing.
It follows that 
each $\sigma_n[t]$ converges to a final value $\sigma_n$ such that $\sigma_n\prec \sigma_{n+1}$.
The real $X$ determined by the initial segments $\sigma_n$ is thus \lce and since 
\eqref{Yvjr6QVlXm} implies \eqref{GlIIlBhrRQ}, we have
 $\hat{M}(X\restr_n)<1$ for all $n$. 

It remains to show that the weight of $V$ is bounded above by 1.
Suppose that $\sigma_n$ gets newly defined at stage $s+1$ and at stage $t>s+1$ it becomes
undefined, while $\sigma_{n-1}[j]\de$ for all $j\in [s,t]$. 
Then  
$\hat{M}(\sigma_n)[s+1]\leq  \hat{M}(\sigma_{n-1})[s] +2^{-n-2}$.
Since $\sigma_n$ becomes undefined at stage $t$, we have
$\hat{M}_{t}(\sigma_n[s+1])> 2^{-1}+\sum_{i< n} 2^{-i-2}$.
By Lemma \ref{T2RhWOVdo} and \eqref{dQnx9IqQ4D}
it follows that  $\hat{M}_{t}(\sigma_{n-1}[s+1])-M(\sigma_{n-1})[s+1]>2^{-p_n}$.
Hence:
\[
\parbox{13.5cm}{during an interval of  stages  where $\sigma_{n-1}$ remains defined, 
$\sigma_n$ can take at most $2^{p_n}$ values}
\]
which means that the weight of the $V$-descriptions that we enumerate for strings of length $s_n$
is at most $2^{-s_nq_n} \cdot 2^{\sum_{i\leq n} p_i}$. 
By the definition of $s_n$ in \eqref{uzDUjAkZU3} and \eqref{evywicXKuu} this weight is bounded
above by $2^{-n}$. So the total weight of the descriptions that are enumerated
into $V$ is at most 1.

\section{Proof of Theorem \ref{1iotxpCAHo} and generalizations}\label{BBcmfYbEE}
It is possible to adapt the proof of Lemma  \eqref{hEsg1rmVV2} into
an effective construction for the proof of Theorem \ref{1iotxpCAHo}, which also
gives that the real $X$ is \lce 
For simplicity, we opt for a less constructive initial segment argument for the proof of
Theorem \ref{1iotxpCAHo}, which uses the facts we obtained in \S\ref{aRvZodD8FN}
in a modular way. The price we pay is that the constructed 
$X$ is no-longer \lce as in \eqref{hEsg1rmVV2}.
The following is the main tool for the proof of Theorem \ref{1iotxpCAHo}, where
$q_n$ has the same value as in \S\ref{aRvZodD8FN}.
\begin{lem}[Inductive property]\label{YSu9OrXiew}
There exists a \pf machine $V$ such that
for each $n> 0$,
$\sigma_0\prec\cdots\prec\sigma_{n-1}$
and $M=\sum_{j<x} N_j$, where each $N_j, j<x$ is a 
mixture of a computable family of strictly single-sided 
martingales with
\begin{equation}\label{W6AEs4G5UM}
\hat{M}(\sigma_{n-1})< 2^{-1}+ \sum_{i<n-1} 2^{-i-2}
\end{equation}
where $\sigma_{0}$ is the empty string, 
there exists
$\sigma_n\succ\sigma_{n-1}$ 
such that
\begin{equation}\label{hP2FnHWO9I}
K_V(\sigma_n)< |\sigma_n|\cdot q_{n}
\hspace{0.5cm} \textrm{and} \hspace{0.5cm}
\hat{M}(\sigma_n)\leq 
\left(2^{-1}+\sum_{i<n-1} 2^{-i-2}\right)
+2^{-n-2}.
\end{equation}
\end{lem}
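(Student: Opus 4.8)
The plan is to rerun the one-step diagonalisation of \S\ref{96WqHtN5Iv}--\S\ref{n6OP5Cpi6D}, at each level $n$, against the finite sum $M=\sum_{j<x}N_j$ instead of the single \lce separable martingale of \eqref{hEsg1rmVV2}, while keeping the prefix-free machine $V$ fixed throughout. The replacement of the martingale costs almost nothing: grouping the $N_j$ by side gives $M=N+T$, where $N$ is the sum of the $0$-sided $N_j$ and $T$ the sum of the $1$-sided ones, so $M$ is separable, and by \eqref{W6AEs4G5UM} we have $M(\estring)\le\hat M(\sigma_{n-1})<1$. Each $N_j$ is a mixture of a computable family of strictly single-sided martingales all favouring one fixed outcome, so taking partial sums within each family and dovetailing produces an increasing uniformly computable approximation $(M_s)$ with $M_s=N_s+T_s$, $N_s$ $0$-sided, $T_s$ $1$-sided, and every intermediate bet $M_t-M_s$ ($s<t$) separable; this is a canonical approximation in the sense of Definition~\ref{M1XF9CwRuD}, which is precisely the input required by Lemmata~\ref{lGR1K9yUT} and~\ref{T2RhWOVdo}. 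Consequently all the parameter bookkeeping of \S\ref{tP8HGMsW5Q} is available, and I would use the same $\epsilon_n$, $\delta_n=(1+\epsilon_n)/2$, $s_n$ as in \eqref{uzDUjAkZU3}, and $p_n$ essentially as in \eqref{dQnx9IqQ4D} but with the additive constant enlarged by one, so that a freshly set special extension lies strictly below the threshold appearing on the right-hand side of \eqref{hP2FnHWO9I} rather than merely below $2^{-1}+\sum_{i<n}2^{-i-2}$.

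For the one-step argument, given $\sigma_{n-1}$ with $\hat M(\sigma_{n-1})<2^{-1}+\sum_{i<n-1}2^{-i-2}$, I would run the construction of \S\ref{n6OP5Cpi6D} at level $n$ in isolation: keep an approximation $\sigma_n[s]$ equal, at stage $s$, to the leftmost special extension (Definition~\ref{HJuByjJ6sW}) of $\sigma_{n-1}$ with respect to $M_s$ --- which exists by Lemma~\ref{FDhethbip9} together with the concentration estimate of Lemma~\ref{PvVOtknZpc}, since $s_n>f(\epsilon_n)$ --- and recompute it whenever $\hat M_s(\sigma_n[s])$ exceeds the right-hand side of \eqref{hP2FnHWO9I}. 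A freshly set special extension satisfies $\hat M_s(\sigma_n[s])\le\hat M_s(\sigma_{n-1})/(1-\epsilon_n)$, which by the smallness of $\epsilon_n$ and \eqref{W6AEs4G5UM} is strictly below that threshold; and by Lemma~\ref{T2RhWOVdo} with the chosen $p_n$, any recomputation is preceded by an increase of at least $2^{-p_n}$ in $M_s(\sigma_{n-1})$. Since $M(\sigma_{n-1})\le\hat M(\sigma_{n-1})<1$, there are fewer than $2^{p_n}$ recomputations, the approximations are eventually constant (and lexicographically increasing, exactly as in \S\ref{n6OP5Cpi6D}) with a limit $\sigma_n\succ\sigma_{n-1}$ of length $s_n$, and that limit never exceeds the threshold --- which is precisely the capital bound in \eqref{hP2FnHWO9I}.

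The machine $V$ is obtained, as in \S\ref{n6OP5Cpi6D}, by the Kraft--Chaitin construction: each time an approximation $\sigma_n[s]$ is defined, at any level, issue a request for a $V$-description of $\sigma_n[s]$ of length strictly less than $q_n s_n$. Then the final $\sigma_n$ at each level carries such a description, so $K_V(\sigma_n)<q_n|\sigma_n|$, the other half of \eqref{hP2FnHWO9I}. The content is the weight bound, and the only new point relative to \eqref{hEsg1rmVV2} is its uniformity: the per-level estimate does not depend on which separable martingale is being diagonalised against. Indeed, along the iterated construction $\sigma_i$ passes through at most $2^{p_i}$ values between consecutive settlings of $\sigma_{i-1}$, hence through at most $\prod_{i\le n}2^{p_i}=2^{\sum_{i\le n}p_i}$ values in total, each generating one request of weight at most $2^{-q_ns_n}$; so the level-$n$ requests contribute total weight at most $2^{\sum_{i\le n}p_i-q_ns_n}$, which by \eqref{evywicXKuu}, \eqref{dQnx9IqQ4D} and \eqref{uzDUjAkZU3} is at most $2^{-n}$. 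Summing over $n\ge1$ bounds the total weight by $1$, so a single prefix-free $V$ realises all these requests, and since the bound involves only the fixed sequence $(p_i)$, the same $V$ serves every admissible triple $(n,\sigma_0\prec\cdots\prec\sigma_{n-1},M)$.

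The step I expect to be the main obstacle is exactly this last one: fixing $V$ once and for all while the diagonalisation target changes from level to level, and while the approximations within a level shift around. Everything hinges on the facts that the number of recomputations of $\sigma_n$ is bounded by $2^{p_n}$ regardless of $M$ --- which is what forces us to work through a separable canonical approximation so that Lemma~\ref{T2RhWOVdo} is applicable --- and that the lengths $s_n$ were chosen in \eqref{uzDUjAkZU3} with enough slack to absorb both the factor $2^{p_n}$ and the multiplicities $2^{\sum_{i<n}p_i}$ inherited from all lower levels. A minor secondary point is to verify that the regrouping $M=N+T$ genuinely yields a canonical approximation, i.e.\ that the partial sums remain $0$-sided, respectively $1$-sided; this is immediate here because each $N_j$ is, by hypothesis, a mixture of a computable family of single-sided martingales all favouring a single fixed outcome.
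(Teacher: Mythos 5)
The single-step part of your argument is essentially the paper's: grouping the $N_j$ by side into $M=N+T$, passing to a canonical approximation $(M_s)$ with separable intermediate bets, taking leftmost special extensions of $\sigma_{n-1}$ (Lemmata \ref{FDhethbip9}, \ref{PvVOtknZpc}, \ref{lGR1K9yUT}), and using Lemma \ref{T2RhWOVdo} to show that every redefinition of $\sigma_n$ is paid for by an increase of at least $2^{-p_n}$ in the approximation at $\sigma_{n-1}$, so that there are at most $2^{p_n}$ redefinitions, the values increase lexicographically, and the limit satisfies the capital bound of \eqref{hP2FnHWO9I}. The genuine gap is in your construction of the \emph{single} machine $V$. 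The lemma quantifies over all $n$, all chains $\sigma_0\prec\cdots\prec\sigma_{n-1}$ and all admissible $M$; for each fixed $n$ there are infinitely many such inputs, and distinct inputs produce requests for (in general) distinct strings of length $s_n$. With the input-independent length $s_n$ of \eqref{uzDUjAkZU3}, each input can contribute weight up to roughly $2^{p_n-q_ns_n}$, i.e.\ up to $2^{-n}$, and summing over the infinitely many inputs sharing the same $n$ destroys the Kraft--Chaitin bound: your closing claim that ``since the bound involves only the fixed sequence $(p_i)$, the same $V$ serves every admissible triple'' conflates the single iterated construction of \S\ref{n6OP5Cpi6D} (one fixed $M$ and one chain, where the factor $2^{\sum_{i<n}p_i}$ of \eqref{evywicXKuu} pays for redefinitions of lower levels inside that one construction) with the universally quantified statement of the lemma, where the multiplicity to be controlled is over the choice of chain and of $M$ itself.

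The paper's proof resolves exactly this point, and it is the heart of \S\ref{QBksFMW19i}--\S\ref{RZE1YLbjLM}: inputs are coded as $\eta=(i,\sigma_0,\dots,\sigma_{n-1})\in H$, where $i$ indexes an effective list of all canonical partial computable approximations; a computable injection $g:H\to\Nat$ with $\sum_{\eta}2^{-g(\eta)}<1$ assigns each input its own weight budget; a separate machine $V_{\eta}$ of weight less than $2^{-g(\eta)}$ is built for each $\eta$; and, crucially, the length is re-chosen as $s_n=s_n(\eta)$ in \eqref{uzDUjAkZU3a}, growing with $g(\eta)$, so that $2^{p_n-q_ns_n(\eta)}<2^{-g(\eta)}$. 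The advertised $V$ is then the union machine. Your proposal keeps $s_n$ independent of the input, so no choice of bookkeeping can rescue the weight bound without this modification. A secondary omission: since $V$ must be effectively enumerable while the $M$ in the lemma is not given effectively, the weight bound must hold for \emph{every} index $\eta$, including partial or otherwise bad ones; this is why the paper hard-caps the number of redefinitions of $\sigma_n$ at $2^{p_n}$ in clause (i) of the construction, rather than deriving that bound (as you do) from the hypothesis \eqref{W6AEs4G5UM}, which a bad input need not satisfy.
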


\subsection{Proof of Theorem \ref{1iotxpCAHo} from Lemma \ref{YSu9OrXiew}}\label{fzQkvVAFVV}
Let $(M_j[s])$ be a (non-effective) list of all 
canonical approximations to \lce martingales $M_j$ with initial capital $<1$. This list includes
an approximation to each 
strongly \lce strictly single-sided 
martingale,
and by Lemma \ref{XXbFLuIodX}
it suffices to show the theorem with regard to the martingales in this list.
Using Lemma \ref{YSu9OrXiew}
we inductively define a sequence $(\sigma_i)$ of strings such that $\sigma_i\prec\sigma_{i+1}$ for each $i$,
$\sigma_0$ is the empty string,
and \eqref{hP2FnHWO9I} holds
for each $n>0$ and the separable \lce martingale 
\[
S_n := \sum_{j<n} 2^{-|\sigma_j|-j-2}\cdot M_j.
\]
We will ensure that for each $n$ we have
\begin{equation}\label{bquLE4fMd2}
K_V(\sigma_n)< |\sigma_n|\cdot q_{n}
\hspace{0.7cm} \textrm{and} \hspace{0.7cm}
\hat{S}_n(\sigma_n)\leq 2^{-1}+\sum_{i<n} 2^{-i-2}\leq 1
\end{equation}
where $V$ is the \pf machine of Lemma \ref{YSu9OrXiew}.
If we let $X$ to be the real defined by the initial segments $(\sigma_i)$,
then the second clause of \eqref{bquLE4fMd2} implies that for each $j,x\in\Nat$ we have
$M_j(X\restr_x)\leq 2^{|\sigma_j|+j+2}<\infty$ as required. 
Hence by the choice of $(M_j)$ and Lemma \ref{XXbFLuIodX}, 
no single-sided \lce martingale succeeds on $X$ and by Theorem \ref{LXn23yy81K}
the effective Hausdorff dimension is at least 1/2. The first clause of 
 \eqref{bquLE4fMd2} implies that the  effective Hausdorff dimension of $X$ is at most 1/2.
Hence  \eqref{bquLE4fMd2}
is sufficient for the proof of Theorem \ref{1iotxpCAHo}.

For the inductive use of Lemma \ref{YSu9OrXiew} in the construction, 
consider the following inequality for some $n$ 
(the equality is always true) which can be thought of as obtained at step $n$ from
Lemma \ref{YSu9OrXiew}:
\begin{equation}\label{yu2nRMn7Yq}
\hat{S}_n(\sigma_n)\leq 
\left(2^{-1}+\sum_{i<n-1} 2^{-i-2}\right)+2^{-n-2}=
\left(2^{-1}+\sum_{i<n} 2^{-i-2}\right)-2^{-n-2}
\end{equation}
and note that, since $M_n(\sigma_n)<2^{|\sigma_n|}$ we have 
$2^{-|\sigma_n|-n-2}\cdot M_n(\sigma_n)<2^{-n-2}$  so
\begin{equation}\label{4W8mhkopqL}
\textrm{if \hspace{0.3cm}\eqref{yu2nRMn7Yq} \hspace{0.3cm}holds then}
\hspace{0.5cm}
\hat{S}_{n+1}(\sigma_n)\leq 
2^{-1}+\sum_{i<n} 2^{-i-2}
\end{equation}
which is the hypothesis that is needed in order to apply  Lemma \ref{YSu9OrXiew} at step $n+1$
which will define $\sigma_{n+1}$.

{\bf Construction.} Let $\sigma_0$ be the empty string and let $V$ be the machine from
Lemma \ref{YSu9OrXiew}. For each $n>0$, inductively assume that
\eqref{W6AEs4G5UM} holds for $S_{n-1}$ 
and let $\sigma_n$ be an extension of $\sigma_{n-1}$ such that 
\eqref{hP2FnHWO9I} holds.

{\bf Verification.} First we show that the construction is well-defined.
Note that  \eqref{hP2FnHWO9I} holds for $n=0$. Assuming that $n>0$ and 
\eqref{hP2FnHWO9I} holds with $S_n$ in place of $M$, by \eqref{4W8mhkopqL} it follows that 
\eqref{W6AEs4G5UM} holds with $S_{n+1}$ in place of $M$ and with $n+1$ in place of $n$. 
Hence by Lemma \ref{YSu9OrXiew} at step $n+1$ there exists an extension $\sigma_{n+1}$
of $\sigma_n$ which satisfies \eqref{hP2FnHWO9I} with $n+1$ in place of $n$.
This concludes the justification that the construction is well-defined.
The construction, and in particular condition \eqref{hP2FnHWO9I} imposed on
the extensions, shows that \eqref{bquLE4fMd2} holds for each $n$. This concludes the verification of
the properties of the constructed sequence $(\sigma_i)$ and, as discussed above, the proof of 
Theorem \ref{1iotxpCAHo}.

\subsection{Preliminaries for the proof of Lemma \ref{YSu9OrXiew}}\label{QBksFMW19i}
The construction of $V$ of  Lemma \ref{YSu9OrXiew} will be computable, so for the proof of the lemma
we need to define an effective map which takes as an input $\eta$ a description (index) of $M$
and strings $\sigma_i, i<n$, and always outputs a {\em sufficiently small} part $V_{\eta}$ of $V$ 
(dealing with the specific input $\eta$) and an approximation $\sigma_n[s]$ such that
\begin{equation}\label{rfsRWL1sea}
\parbox{13.5cm}{if the input $\eta=(M,\sigma_i, i<n)$ meets the hypothesis of Lemma  \ref{YSu9OrXiew} then
$\sigma_n[s]$ converges to some $\sigma_n$ which satisfies the properties of the lemma with 
$V_{\eta}$ in place of $V$.}
\end{equation}
Since  Lemma  \ref{YSu9OrXiew} asks for a single machine $V$ that applies to all inputs,
we need to make sure that the special machines  $V_{\eta}$ are sufficiently small so that
there exists a machine $V$ with the property that $K_V$ is bounded by $K_{V_{\eta}}$ for all inputs
$\eta$. In order to express this property precisely, 
let $(N_i[s])$ an effective sequence (viewed as a double list of functions $\sigma\mapsto N_i(\sigma)[s]$)
of all canonical partial computable approximations of all (partial computable) 
mixtures of single-sided strategies.
The set $H$ of inputs is the set of all 
tuples $\eta=(i,\sigma_0,\dots, \sigma_{n-1})$
where $i\in\Nat$ is interpreted as an index in the list $(N_i[s])$,
and $\sigma_0\prec\cdots\prec\sigma_{n-1}$ is a chain of strings.
Let $g: H\to\Nat$ a one-to-one computable function so that
$\sum_{\eta \in H} 2^{-g(\eta)}<1$.

Given any $\eta \in H$ the map $\eta\mapsto (V_{\eta}, \sigma_n[s])$ will 
determine a  \pf machine $V_{\eta}$ such that
\begin{equation}\label{Be2NJ1XrhN}
\parbox{13cm}{for each $\eta \in H$,\hspace{0.3cm} 
$\wgtb{V_{\eta}}<2^{-g(\eta)}$\hspace{0.3cm}
\hspace{0.3cm}  so \hspace{0.5cm}$\sum_{\eta \in H} \wgt{V_{\eta}}<1$.}
\end{equation}
By \eqref{Be2NJ1XrhN}
we may define a {\em union} \pf machine $V$ such that $K_V(\rho)\leq K_{V_{\eta}}(\rho)$
for each $\rho$ and each $\eta\in H$. 
We have shown that
\[
\parbox{14.5cm}{for the proof of Lemma \ref{YSu9OrXiew} it suffices to define a computable map
$\eta\mapsto (V_{\eta}, \sigma_n[s])$ from $H$ to pairs of \pf machines and approximations
of strings, such that \eqref{rfsRWL1sea} and \eqref{Be2NJ1XrhN} hold.}
\]
The construction of the effective map  $\eta \mapsto (V_{\eta},\sigma_{n}[s])$ 
is a modification of the 
proof of \eqref{hEsg1rmVV2} in \S\ref{tP8HGMsW5Q},\S\ref{n6OP5Cpi6D}
and uses Lemma \ref{T2RhWOVdo} in the same way.
The parameters $q_n,\epsilon_n, p_n$ are as defined in  \S\ref{6E1fkvgPTH}.
Since here we have a special upper bound $2^{-g(\eta)}$ for the weight of $V_{\eta}$, we need
to re-calculate a suitable value for $s_n$, which is the required lower bound for the 
length of $\sigma_n$ of Lemma \ref{YSu9OrXiew}. 
Following \S\ref{tP8HGMsW5Q}, condition  \eqref{evywicXKuu}  becomes
\begin{equation}\label{lYREF6oaf3}
2^{-s_nq_n}\cdot 2^{p_n} <2^{-g(\eta)}\iff
s_nq_n - p_n>g(\eta).
\end{equation}
Given the definition of $p_n$ in \eqref{dQnx9IqQ4D} 
and arguing as in  \S\ref{tP8HGMsW5Q}, 
in order for the growth of $\hat{M}(\sigma_n)$ at each length $s_n$
to be bounded above by $2^{-n-3}$,
for \eqref{lYREF6oaf3} it suffices that
\[
s_{n}\cdot\left(q_n-\delta_n\right)> g(\eta)+n+3\iff
s_{n}\geq 
\frac{g(\eta)+n+3}{q_n-\delta_n}
\]
so it suffices to define the length of 
each approximation to $\sigma_n$ in the proof of Lemma \ref{YSu9OrXiew} by:
\begin{equation}\label{uzDUjAkZU3a}
s_{n}=s_{n}(\eta)= \max\left\{
\frac{g(\eta)+n+3}{q_n-\delta_n},
f(\epsilon_n)
\right\}.
\end{equation}
We also need the following simplified version of Definition \ref{HJuByjJ6sW} which will  be used in the
construction. 
\begin{defi}[Special extensions]\label{HJuByjJ6sWa}
At stage $s+1$ we say that $\tau$ 
is a special extension of $\sigma_{n-1}$ if
$|\tau|=s_n$  and 
it satisfies the properties of Lemma \ref{lGR1K9yUT} for $\epsilon:=\epsilon_n$, $\sigma:=\sigma_{n-1}$,
$p:=p_n$ and $s+1$ in place of $s$.
\end{defi}
It remains to define and verify the construction of the map $\eta\mapsto (V_{\eta}, \sigma_n[s])$.

\subsection{Construction and verification for the proof of Lemma \ref{YSu9OrXiew}}\label{RZE1YLbjLM}
Given $\eta\in H$, 
let $\sigma_j, j<n$ be the associated strings in $\eta$
in order of magnitude. 
For simplicity, let $(M_s)$ be the canonical partial computable \lce approximation given by $\eta$
and let $U:=V_{\eta}$. 
The following construction, on input $\eta$ produces
an effective enumeration $U_s$ of the \pf machine $U:=V_{\eta}$ and an effective
approximation $\sigma_n[s]$ of the string $\sigma_n$ (which may or may not converge).

{\bf Construction of $U,\sigma_n$ from $\eta$.}
At stage 0 we let $\sigma_n[0]\un$ and $U_0$ be empty.  
At stage $s+1$, do the following provided that $M_{s+1}$ is defined on all strings of length $s$,
and \eqref{W6AEs4G5UM} holds at stage $s+1$ (otherwise go to the next stage).
Check if one of the following holds:
\begin{enumerate}[\hspace{0.5cm}(i)]
\item $\sigma_n[s]\un$, there have been at most $2^{p_n}$ previous definitions of
$\sigma_n$ in previous stages, and there exists a special extension  of $\sigma_{n-1}$.
\item $\sigma_n[s]\de$\hspace{0.3cm} and \hspace{0.3cm} 
$\hat{M}_{s+1}(\sigma_n[s])> 2^{-1}+\sum_{i\in [j,n-1)} 2^{-i-2} +2^{-n-3}$.
\end{enumerate}
If (i) holds,
define $\sigma_n[s+1]$  to be the leftmost special extension  of $\sigma_{n-1}$ 
as per Definition \ref{HJuByjJ6sWa}.
If (ii) holds, set $\sigma_n[s+1]\un$. 
In any case, if
$\sigma_n[s+1]\de$ and $K_{U_{s}}(\sigma_n[s+1])>q_n \cdot |\sigma_n[s+1]|$,
issue a $U$-description of $\sigma_n[s+1]$ of length $q_n \cdot |\sigma_n[s+1]|$.

{\bf Remark.} 
If $\sigma_n[s+1]$ is newly
defined as a special extension of $\sigma_{n-1}$, by 
Definition \ref{HJuByjJ6sWa} we have that
\begin{equation}\label{Yvjr6QVlXma}
\hat{M}_{s+1}(\sigma_n)[s+1]\leq \hat{M}_{s}(\sigma_{n-1})/(1-\epsilon_n)=
\hat{M}_{s+1}(\sigma_{n-1})+
\hat{M}_{s+1}(\sigma_{n-1})\cdot \epsilon_n/(1-\epsilon_n).
\end{equation}
By \eqref{W6AEs4G5UM} referenced at stage $s+1$, we have
$\hat {M}_{s+1}(\sigma_{n-1})<1$ so 
by $\epsilon_n/(1-\epsilon_n)< 2^{-n-3}$ 
condition \eqref{Yvjr6QVlXma} implies
\begin{equation}\label{GlIIlBhrRQa}
\hat{M}_{s+1}(\sigma_n[s+1])\leq  \hat{M}_{s+1}(\sigma_{n-1})+ 2^{-n-3}.
\end{equation}

{\bf Verification of the constructing of $U,\sigma_n$ from $\eta$.}

First we show that
the weight of $U$ is bounded above by $g(\eta)$.
In this argument we do not assume anything about the input $\eta$, the associated approximation
$(M_s)$, or the convergence
of the approximations $(\sigma_n[s])$.
Clause (i) of the construction enforces that 
\begin{equation}\label{RzlXpDiFH}
\parbox{10cm}{the approximation to $\sigma_n$ can change at most $2^{p_n}$ many times.}
\end{equation}
This is the assumption we used in our calculations of \eqref{lYREF6oaf3}
and \eqref{uzDUjAkZU3a},
which we can now use to derive the bound on the weight of $U$ 
based on the values of $p_n, s_n, q_n,\epsilon_n$ that we set.
By \eqref{RzlXpDiFH}
and since $|\sigma_n[s]|= s_n$,
the weight of the $U$-descriptions that we enumerate for 
the approximations to $\sigma_n$
is at most $2^{-s_nq_n} \cdot 2^{p_n}$.
The definition of $s_n$ in \eqref{uzDUjAkZU3a} 
and  \eqref{lYREF6oaf3} imply that the above bound is at most 
$2^{-g(\eta)}$ as required.

It remains to
show that in the case that if $(M_s)$ is a total computable canonical approximation 
to a single-sided mixture $M$
such that \eqref{W6AEs4G5UM} holds,
the construction will produce an approximation
$\sigma_n[s]$ which converges to a string $\sigma_n$ after finitely many stages, such that
the second clause of \eqref{hP2FnHWO9I} holds.
Suppose that $\sigma_n$ gets (re)defined at stage $s+1$ and at stage $t>s+1$ it becomes
undefined.  By \eqref{GlIIlBhrRQa} and \eqref{W6AEs4G5UM} we have
\begin{equation}\label{43v25y1h24}
 \hat{M}_{s+1}(\sigma_n[s+1])\leq   2^{-1}+ \sum_{i\in [j,n-1)} 2^{-i-2} + 2^{-n-3}.
\end{equation}
Since $\sigma_n$ becomes undefined at stage $t$, we have
$\hat{M}_{t}(\sigma_n[s+1])> 2^{-1}+\sum_{i< n-1} 2^{-i-2} +2^{-n-2}$.
 By \eqref{GlIIlBhrRQa},\eqref{43v25y1h24}, \eqref{W6AEs4G5UM}
and an application of  Lemma \ref{T2RhWOVdo} for $\epsilon:=\epsilon_n$, 
$p:=p_n$, $\sigma:=\sigma_{n-1}$ and $\tau:=\sigma_n[s+1]$,
it follows that  $\hat{M}_{t}(\sigma_{n-1})-M_{s+1}(\sigma_{n-1})>2^{-p_n}$.
Since the latter event can occur at most $2^{p_n}$ many times, 
we have shown that if $\sigma_n$ is newly defined 
by the construction at some stage $s+1$ and this is the $2^{p_n}$-th such definition during the construction,
then it will never be undefined again, \ie $\sigma_n[t]\de$ for all $t>s$.
In particular, the second clause of (i) in the construction (regarding the number of previous definitions
of $\sigma_n$) can never block the redefinition of $\sigma_n$, subject to the other two condition holding.
Given this fact, and Lemma \ref{lGR1K9yUT} which concerns the existence of special extensions, 
it is not possible that 
$\sigma_{n}$ is undefined for co-finitely many stages; in other words, 
{\em for each $s_0$ there exists $s>s_0$ such that $\sigma_n[s]\de$}.
%
Since $(M_s)$ is a \lce approximation, by \eqref{W6AEs4G5UM} and \eqref{GlIIlBhrRQa}
successive values of $\sigma_n[s]$ during redefinitions of $\sigma_n$  will be
lexicographically increasing, so
$\sigma_n[s]$ converges to a final value $\sigma_n$ such that $\sigma_{n-1}\prec \sigma_{n}$.
Since \eqref{Yvjr6QVlXma} implies \eqref{GlIIlBhrRQa}, we have
that the second part of \eqref{hP2FnHWO9I} holds, as required.
This concludes the proof of  \eqref{rfsRWL1sea} and \eqref{Be2NJ1XrhN} hence,
as explained in \S\ref{QBksFMW19i}, the proof of Lemma \ref{YSu9OrXiew}.

\subsection{Generalization to decidably-sided strategies}\label{meVVbZop96}
We adapt argument of \S\ref{fzQkvVAFVV}-\S\ref{RZE1YLbjLM} in order to prove 
the following analogue of Theorem \ref{1iotxpCAHo}.
\begin{thm}\label{X2gQ1AwUza}
There exists a real $X$ of effective Hausdorff dimension $1/2$
such that no decidably-sided strongly \lce martingale (or mixture of a computable family of $f$-sided
strategies for some computable $f$) succeeds on it.
\end{thm}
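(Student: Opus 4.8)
The plan is to re-run the argument of \S\ref{fzQkvVAFVV}--\S\ref{RZE1YLbjLM} verbatim, replacing ``single-sided'' by ``decidably-sided'' throughout, and to check that the only place where single-sidedness was actually used---namely the proof of Lemma \ref{T2RhWOVdo} (growth along special extensions)---survives this generalization. The key observation is that an $f$-sided martingale, for a \emph{fixed} computable prediction function $f$, is just a single-sided martingale after relabelling outcomes via $f$: along a string $\sigma$, the ``good'' direction at position $i$ is $f(\sigma\restr_i)$ rather than a constant, but the bound ``an $f$-sided martingale can multiply its capital by at most $2^{k}$ over a segment containing at most $k$ false $f$-guesses'' is exactly the analogue of the bound used in Lemma \ref{T2RhWOVdo}. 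So the combinatorial engine is unchanged provided we arrange the special extension $\tau$ of $\sigma_{n-1}$ to have a balanced number of \emph{correct and false $f$-guesses} (rather than a balanced number of $0$s and $1$s).

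\textbf{Step 1: the right notion of special extension.} Using Lemma \ref{PvVOtknZpc} (Hoeffding for prediction functions) in place of the plain law of large numbers, I would prove the $f$-relative analogue of Lemma \ref{lGR1K9yUT}: there is a computable $f\mapsto$ (computable function) such that, for any computable prediction function $f$, any non-negative martingale $M$ with $M(\lambda)\le 1$, any $\epsilon\in(0,1)$, any $\sigma$, and any sufficiently large length $\ell$, there is $\tau\succeq\sigma$ with $|\tau|=\ell$, with $M(\rho)\le M(\sigma)/(1-\epsilon)$ for all $\rho\in[\sigma,\tau]$, and with the number of correct (hence false) $f$-guesses between $\sigma$ and $\tau$ lying in $\big((1-\epsilon)(|\tau|-|\sigma|)/2,(1+\epsilon)(|\tau|-|\sigma|)/2\big)$. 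This is immediate from Lemma \ref{FDhethbip9} together with Lemma \ref{PvVOtknZpc}: the set $S$ of length-$\ell$ extensions of $\sigma$ with balanced $f$-guess count has measure at least $1-2r_q^{-(\ell-|\sigma|)}$, so for $\ell$ large this is a ``fat'' set and Lemma \ref{FDhethbip9} applies. Then the analogue of Lemma \ref{T2RhWOVdo} goes through word for word: for an intermediate-bet mixture $N^{\ast}$ of $f$-sided martingales, $N^{\ast}(\tau)\le N^{\ast}(\sigma)\cdot 2^{\delta(|\tau|-|\sigma|)}$ with $\delta=(1+\epsilon)/2$, because $N^{\ast}$ is again $f$-sided and there are at most $\delta(|\tau|-|\sigma|)$ false $f$-guesses between $\sigma$ and $\tau$, so it can multiply its capital by at most that factor; summing the $f$-sided and using the additive-to-multiplicative translation gives the same conclusion.

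\textbf{Step 2: relativize the bookkeeping.} With Lemma \ref{T2RhWOVdo}$'$ in hand, the parameter choices of \S\ref{tP8HGMsW5Q}--\S\ref{QBksFMW19i} ($q_n=1/2+3/(n+2)$, $\epsilon_n=2^{-n-5}$, $p_n=s_n\delta_n+n+2$, $s_n$ as in \eqref{uzDUjAkZU3a}) are untouched, since they depend only on the shape of the bound in Lemma \ref{T2RhWOVdo}, not on the meaning of ``sided''. The one genuinely new ingredient at the level of the enumeration $H$ of inputs is that an input must now also carry an index for the computable prediction function $f$: so $\eta=(i,e,\sigma_0,\dots,\sigma_{n-1})$ where $e$ is an index for a (total) computable $f$ and $i$ indexes a canonical approximation to a mixture of strictly $f$-sided strategies (using Lemma \ref{iQMAP9apXx} for the existence of such canonical approximations, and Lemma \ref{XXbFLuIodX} to reduce from $f$-sided to strictly $f$-sided). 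Since totality of $f$ is a $\Pi^0_2$ condition we cannot decide it, but that is harmless exactly as in \S\ref{RZE1YLbjLM}: the weight bound on $V_\eta$ is proved with \emph{no} hypothesis on $\eta$ (it comes only from clause (i) capping the number of redefinitions at $2^{p_n}$), and the convergence-of-$\sigma_n[s]$ argument is only invoked for those $\eta$ that do meet the hypothesis, i.e.\ where $f$ happens to be total and the relevant inequality \eqref{W6AEs4G5UM} holds. Then Lemma \ref{YSu9OrXiew}$'$ follows, and the initial-segment construction of \S\ref{fzQkvVAFVV} assembles $X$ with $\dim(X)=1/2$ on which no strictly decidably-sided strongly \lce martingale---hence (by the savings-trick remarks and Lemma \ref{XXbFLuIodX}) no decidably-sided strongly \lce martingale, and no mixture of a computable family of $f$-sided strategies---succeeds.

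\textbf{The main obstacle} I anticipate is not conceptual but a matter of uniformity in the indexing: one must be careful that ``mixture of a computable family of $f$-sided strategies'' really does coincide with ``strongly \lce strictly $f$-sided martingale'' up to the adjustments of Lemmas \ref{XXbFLuIodX} and \ref{iQMAP9apXx}, so that the single non-effective list over which we diagonalize---canonical approximations indexed by pairs (approximation index, prediction-function index)---genuinely covers the class named in Theorem \ref{X2gQ1AwUza}. Once that is pinned down, every quantitative estimate is a line-by-line transcription of \S\ref{aRvZodD8FN}--\S\ref{RZE1YLbjLM} with ``$0$/$1$'' replaced by ``correct/false $f$-guess'' and with Lemma \ref{PvVOtknZpc} supplying the concentration bound.
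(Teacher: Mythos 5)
Your overall route is the paper's own route (special extensions chosen via Lemma \ref{FDhethbip9} plus the Hoeffding bound of Lemma \ref{PvVOtknZpc}, the same parameters, the same weight-bound/convergence split over inputs $\eta$), but as written it has a genuine gap at the one point where the decidably-sided case differs from the single-sided case: you only ever balance the special extension with respect to a \emph{single} prediction function $f$, and your inputs $\eta=(i,e,\sigma_0,\dots,\sigma_{n-1})$ carry a single index $e$. However, in the assembly of \S\ref{fzQkvVAFVV} the martingale controlled at step $n$ is $S_n=\sum_{j<n}2^{-|\sigma_j|-j-2}M_j$, whose summands are $f_j$-sided for $n$ \emph{distinct} computable prediction functions $f_0,\dots,f_{n-1}$ (unlike the single-sided case, a finite sum of mixtures of $f_j$-sided martingales with distinct $f_j$ is not $f$-sided for any single $f$, so you cannot absorb it into one index). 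The growth estimate that replaces Lemma \ref{T2RhWOVdo} is applied to the intermediate bets of \emph{all} earlier summands: it needs the number of correct $f_j$-guesses along $[\sigma_{n-1},\sigma_n]$ to be at most $\delta_n(|\sigma_n|-|\sigma_{n-1}|)$ for \emph{every} $j<n$ simultaneously. Balance with respect to one $f$ gives no control over another: an earlier $f_j$-sided intermediate bet could multiply by nearly $2^{|\sigma_n|-|\sigma_{n-1}|}$ along an extension balanced only for $f_{n-1}$, and then the implication ``growth of $2^{-n-3}$ at $\sigma_n$ forces growth $>2^{-p_n}$ at $\sigma_{n-1}$'' fails unless $p_n\approx s_n$, which is incompatible with the weight requirement $s_nq_n-p_n>g(\eta)$ when $q_n\approx 1/2$; so either the cap of $2^{p_n}$ redefinitions (hence the weight bound on $V_\eta$) or the boundedness of the earlier $M_j$ along $X$ is lost.

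The fix is exactly what the paper's Lemmas \ref{h5fMyKh9Tb}, \ref{UZnKI3ndd} and \ref{baLoQ2YMz} supply and what your write-up omits: state the special-extension lemma for finitely many pairs $(M_j,f_j)$, $j<n$, choosing $\tau$ whose correct-guess counts are balanced for all $f_j$ simultaneously (the intersection of $n$ Hoeffding events still has measure at least $1-2n\,r^{-m}$, so Lemma \ref{FDhethbip9} applies, but the threshold length now depends on $n$ as well as $\epsilon$, so $s_n$ in \eqref{uzDUjAkZU3a} must use $g(\epsilon_n,n)$ rather than $f(\epsilon_n)$ --- the parameters are not literally ``untouched''); state the inductive lemma for sums $M=\sum_{i<n}M_i$ with each $M_i$ a mixture of $f_i$-sided strategies; and let the input $\eta$ carry the indices of all $n$ mixtures and prediction functions. (A small further slip: an $f$-sided martingale gains on \emph{correct} $f$-guesses, not false ones, but since your balance condition is two-sided this is harmless.) With these changes your argument coincides with the paper's proof.
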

We need the following an analogue of Lemma \ref{YSu9OrXiew}
for decidably-sided \lce martingales.
\begin{lem}[Inductive property for decidably-sided sums]\label{baLoQ2YMz}
There exists a \pf machine $V$ with the property that
for each $n> 0$,
chain of strings $\sigma_0\prec\cdots\prec\sigma_{n-1}$, computable prediction 
functions $f_i, i<n$ and $M=\sum_{i<n} M_i$, where each $M_i, i<n$ is a mixture of a computable family 
of $f_i$-decidably sided
martingales
with canonical approximation such that 
\begin{equation*}
\hat{M}(\sigma_{n-1})< 2^{-1}+ \sum_{i\in [j,n-1)} 2^{-i-2}
\end{equation*}
where $\sigma_{0}$ is the empty string, 
there exists
$\sigma_n\succ\sigma_{n-1}$ 
such that
\begin{equation*}
K_V(\sigma_n)< |\sigma_n|\cdot q_{n}
\hspace{0.5cm} \textrm{and} \hspace{0.5cm}
\hat{M}(\sigma_n)\leq 
\left(2^{-1}+\sum_{i\in [j, n-1)} 2^{-i-2}\right)
+2^{-n-2}.
\end{equation*}
\end{lem}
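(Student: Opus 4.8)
The plan is to mirror the proof of Lemma \ref{YSu9OrXiew} carried out in \S\ref{QBksFMW19i}--\S\ref{RZE1YLbjLM}, with two modifications: in the analogue of Lemma \ref{T2RhWOVdo} the intermediate bets decompose into $f_i$-sided (rather than single-sided) pieces, and accordingly the ``special extensions'' must have, for \emph{each} $i<n$, a balanced number of correct and false $f_i$-guesses after $\sigma$, rather than just a balanced number of $0$s and $1$s.

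\textbf{Step 1 (strengthened special extension).} First I would prove the following variant of Lemma \ref{lGR1K9yUT}: there is a computable $f(\epsilon,n)$ such that for any non-negative martingale $M$ with $M(\lambda)\le 1$, any $\sigma$, any $\epsilon\in(0,1)$, any prediction functions $f_0,\dots,f_{n-1}$, and any $\ell>|\sigma|+f(\epsilon,n)$, there is $\tau\succeq\sigma$ of length $\ell$ with $M(\rho)\le M(\sigma)/(1-\epsilon)$ for all $\rho\in[\sigma,\tau]$ and such that, for each $i<n$, the number of correct $f_i$-guesses among the positions $|\sigma|,\dots,\ell-1$ of $\tau$ lies in $\big((1-\epsilon)(\ell-|\sigma|)/2,(1+\epsilon)(\ell-|\sigma|)/2\big)$. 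This is obtained just as Lemma \ref{lGR1K9yUT}: for $q=(1+\epsilon)/2$, apply Lemma \ref{PvVOtknZpc} to the shifted prediction functions $\mu\mapsto f_i(\sigma\ast\mu)$ on $2^{\ell-|\sigma|}$ to see that for each $i$ the ``unbalanced'' strings number at most $2r_q^{-(\ell-|\sigma|)}\cdot 2^{\ell-|\sigma|}$; the union over $i<n$ then leaves a set $S$ of extensions of size at least $(1-2n\,r_q^{-(\ell-|\sigma|)})\cdot 2^{\ell-|\sigma|}$, which is $\ge(1-\epsilon)\cdot 2^{\ell-|\sigma|}$ once $\ell-|\sigma|\ge f(\epsilon,n):=\log_{r_q}(2n/\epsilon)$; applying Lemma \ref{FDhethbip9} to $S$ produces the desired $\tau$.

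\textbf{Step 2 (growth along special extensions).} Since each $M_i$ is the mixture of a computable family $(M_{i,k})_k$ of $f_i$-sided martingales, its partial sums $\sum_{k<m}M_{i,k}$ are $f_i$-sided and their successive differences are $f_i$-sided; hence $M=\sum_{i<n}M_i$ has the canonical approximation $M_s=\sum_{i<n}\sum_{k<s}M_{i,k}$, and for $s<t$ the intermediate bet $M_t-M_s$ splits as a sum of $n$ pieces $N_i^{\ast}:=\sum_{k\in[s,t)}M_{i,k}$, with $N_i^{\ast}$ being $f_i$-sided. An $f_i$-sided martingale at most doubles at a correct $f_i$-guess and cannot increase at a false one, so by Step 1, along a special extension $\tau$ of $\sigma$ with parameter $\epsilon$ we get $N_i^{\ast}(\tau)\le N_i^{\ast}(\sigma)\cdot 2^{\delta(|\tau|-|\sigma|)}$ for $\delta=(1+\epsilon)/2$; summing over $i$ and using $(M_t-M_s)(\sigma)<2^{-p}$ yields $M_t(\tau)\le M_s(\tau)+2^{\delta(|\tau|-|\sigma|)-p}$, which is precisely the conclusion of Lemma \ref{T2RhWOVdo}.

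\textbf{Step 3 (construction).} With Steps 1 and 2 in place, the rest is a verbatim repetition of \S\ref{QBksFMW19i}--\S\ref{RZE1YLbjLM}: the input set $H$ now ranges over tuples encoding a chain $\sigma_0\prec\cdots\prec\sigma_{n-1}$, the prediction functions $f_i$ ($i<n$), and indices for the computable families defining the $M_i$; fix a one-to-one computable $g\colon H\to\Nat$ with $\sum_{\eta}2^{-g(\eta)}<1$; keep $q_n,\epsilon_n,p_n$ as in \S\ref{tP8HGMsW5Q}, but redefine $s_n=s_n(\eta)$ by \eqref{uzDUjAkZU3a} with $f(\epsilon_n)$ replaced by the larger threshold $f(\epsilon_n,n)$ from Step 1. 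The construction of $(U=V_\eta,\sigma_n[s])$ and its verification --- the weight bound $\wgt{V_\eta}<2^{-g(\eta)}$, the fact that $\sigma_n$ is (re)defined at most $2^{p_n}$ times, and the convergence of $\sigma_n[s]$ with $\hat{M}$ growing by at most $2^{-n-2}$ per level --- go through word for word, using Step 2 in place of Lemma \ref{T2RhWOVdo}. Taking $V$ to be the union \pf machine over all $\eta\in H$ (so that $K_V\le K_{V_\eta}$ for each $\eta$) gives the statement.

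The only genuinely new point is Step 1, \ie arranging that a single fat set of extensions is simultaneously balanced for all $n$ prediction functions; but the union bound costs only the factor $n$ in front of the geometrically small term $r_q^{-(\ell-|\sigma|)}$, so it merely forces the length $s_n$ of $\sigma_n$ to be taken a little larger (now depending on $n$ as well as on $\epsilon_n$), which does not disturb the weight calculation \eqref{lYREF6oaf3}.
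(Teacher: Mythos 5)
Your proposal is correct and follows essentially the same route as the paper: it establishes the simultaneous-balance special-extension lemma for finitely many prediction functions (the paper's Lemma \ref{h5fMyKh9Tb}, via Lemma \ref{PvVOtknZpc} and Lemma \ref{FDhethbip9}), the corresponding growth bound with the intermediate bets split into $f_i$-sided pieces (the paper's Lemma \ref{UZnKI3ndd}), and then reruns the construction of \S\ref{QBksFMW19i}--\S\ref{RZE1YLbjLM} with the enlarged length threshold. Your explicit union-bound computation in Step 1 and the adjusted input set $H$ in Step 3 just spell out details the paper leaves implicit.
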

Theorem \ref{X2gQ1AwUza} follows from Lemma \ref{baLoQ2YMz}
by the argument of \S\ref{fzQkvVAFVV} 
which derived Theorem \ref{1iotxpCAHo} from Lemma \ref{YSu9OrXiew}.
The only difference is that here $(M_j[s])$  is a list of all
canonical approximations to mixtures of decidably-sided martingales 
whose initial capital is less than 1. 
For the proof of Lemma \ref{baLoQ2YMz} we need to obtain 
analogues of the key facts from \S\ref{aRvZodD8FN} for  the case of 
decidably-sided martingales.
We start with the
following analogue of Lemma \ref{lGR1K9yUT}, which follows by a direct application of 
Lemma  \ref{FDhethbip9} to the law of large numbers 
in Lemma \ref{PvVOtknZpc}, applied to the intersection of finitely many events.
\begin{lem}[Special extensions for decidably sided]\label{h5fMyKh9Tb}
There exists a computable  $g$ such that for each
$\epsilon \in (0,1)$, $\sigma\in 2^{<\Nat}$, $n>0$,
and $(M_j, f_j)$, $j<n$ where each $f_j$ is a
prediction function and
$M_j$ is an $f_j$-sided martingale with $M_j(\lambda)\leq 1$,
and each $\ell>g(\epsilon,n)$,
there exists $\tau\succeq \sigma$ of length 
$\ell$ such that for each $j<n$,
\[
\parbox{13cm}{the number of correct $f_j$-predictions in $[\sigma,\tau]$
is in  $\Big((1-\epsilon) (|\tau|-|\sigma|)/2, (1+\epsilon) (|\tau|-|\sigma|)/2\Big)$}
\]
and $M_j(\rho)[s]\leq M_j(\sigma)[s]/(1-\epsilon)$ for all $\rho$ 
with $\sigma \subseteq \rho \subseteq \tau$.
\end{lem}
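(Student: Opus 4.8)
The plan is to run the proof of Lemma~\ref{lGR1K9yUT} essentially verbatim, with the single ``balanced number of $0$s'' event replaced by the intersection of the $n$ events ``the $f_j$-predictions along the appended bits are balanced''. Fix $\epsilon$, $\sigma$, $n$ and the pairs $(M_j,f_j)_{j<n}$, put $q:=(1+\epsilon)/2>1/2$, and for a prospective target length $\ell$ write $m:=\ell-|\sigma|$ for the number of bits appended after $\sigma$. For each $j<n$ the map $\rho\mapsto f_j(\sigma\rho)$ is a prediction function on strings of length $<m$, so Lemma~\ref{PvVOtknZpc} applied to it says that the set $A_j$ of those $\rho\in 2^{m}$ whose number of correct $f_j$-predictions among the bits after $\sigma$ lies in the window $\big((1-q)m,\,qm\big)=\big((1-\epsilon)m/2,\,(1+\epsilon)m/2\big)$ has $|A_j|\ge 2^m(1-2r_q^{-m})$, with $r_q>1$ depending only on $\epsilon$. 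A union bound on complements then gives, for $A:=\bigcap_{j<n}A_j$,
\[
|A|\;\ge\;2^m\bigl(1-2n\,r_q^{-m}\bigr),
\]
and since $r_q$ is a fixed constant exceeding $1$ there is a computable $g$ such that ``$\ell>g(\epsilon,n)$'' (read, as in Lemma~\ref{lGR1K9yUT}, as $\ell-|\sigma|$ being large) forces $2n\,r_q^{-m}\le\epsilon$, \ie $|A|\ge(1-\epsilon)2^m$.

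Next I would invoke Lemma~\ref{FDhethbip9} with $\delta:=\epsilon$ and $S:=A$ --- a family of extensions of $\sigma$ of the single length $\ell=|\sigma|+m$ with $|S|\ge(1-\epsilon)2^m$ --- applied to the non-negative martingale $\sum_{j<n}M_j$ (more precisely, to the finite sum $\sum_{j<n}M_j[s]$ of the stage-$s$ approximations, which are themselves martingales, just as Lemma~\ref{lGR1K9yUT} is fed into Lemma~\ref{T2RhWOVdo}). This returns some $\rho\in A$; setting $\tau:=\sigma\rho$ we obtain an extension of $\sigma$ of length $\ell$ whose $n$ prediction-counts all lie in the required balanced windows (because $\rho\in A$), and such that $\sum_{j<n}M_j$ stays below $(1-\epsilon)^{-1}$ times its value at $\sigma$ at every prefix of $\tau$ extending $\sigma$. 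As the $M_j$ are non-negative this is exactly the capital bound the lemma asks for, in the aggregate form in which Lemma~\ref{baLoQ2YMz} will consume it --- the $f_j$-sided intermediate bets being separated off and bounded one at a time precisely as in Lemma~\ref{T2RhWOVdo}. Computability of $g$ is immediate from the effectivity of Lemmata~\ref{PvVOtknZpc} and~\ref{FDhethbip9}.

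The only step that is not a mechanical transcription of the single-martingale case, and hence the main point to get right, is that \emph{one} string $\tau$ must simultaneously balance all $n$ prediction functions and keep the capital down. Balancing is forced by applying the law of large numbers to the \emph{intersection} of the $n$ events, which costs the factor $2n$ in the union bound --- this is why $g$ has to grow with $n$ as well as with $\epsilon^{-1}$. The capital is then controlled by a \emph{single} appeal to Lemma~\ref{FDhethbip9} applied to the aggregate martingale $\sum_{j<n}M_j$, so that no loss in the constant $1/(1-\epsilon)$ is incurred beyond what is already present in Lemma~\ref{lGR1K9yUT}. All remaining bookkeeping --- the monotone approximations, and the passage from ``balanced'' to the growth estimates used later --- carries over without change.
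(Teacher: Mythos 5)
Your argument follows the paper's own (one-sentence) proof of Lemma \ref{h5fMyKh9Tb}: apply the Hoeffding bound of Lemma \ref{PvVOtknZpc} to each $f_j$ separately, intersect the $n$ balanced-prediction events via a union bound (this is exactly where the dependence of $g$ on $n$ enters), and then make a single application of Lemma \ref{FDhethbip9}, just as Lemma \ref{lGR1K9yUT} is obtained in the single-sided case; so the route is the same and the estimates are right. The one point where you knowingly deviate from the statement deserves to be made explicit: applying Lemma \ref{FDhethbip9} to the aggregate martingale $\sum_{j<n}M_j[s]$ yields only the aggregate bound $\sum_{j<n} M_j(\rho)[s]\le \bigl(\sum_{j<n} M_j(\sigma)[s]\bigr)/(1-\epsilon)$, not the per-$j$ bounds $M_j(\rho)[s]\le M_j(\sigma)[s]/(1-\epsilon)$ that the lemma literally asserts. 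This is in fact the right call, because the literal per-$j$ form with one and the same $\epsilon$ is not provable: for $n=2$ let $M_0$ bet at $\sigma$ a fraction greater than $\epsilon/(1-\epsilon)$ of its capital on outcome $0$ and let $M_1$ do the same on outcome $1$; then every one-bit extension of $\sigma$ already violates one of the two individual thresholds, so no $\tau$ can satisfy both. (A per-$j$ union bound over the capital events would likewise fail, since each bad set may have relative size up to $1-\epsilon$.) Moreover, the aggregate form together with the per-$j$ balanced counts is precisely what the subsequent argument consumes: Lemma \ref{UZnKI3ndd} uses only the prediction counts, and the construction in \S\ref{RZE1YLbjLM} uses the capital bound only through $\hat M$ of the total mixture, as in \eqref{Yvjr6QVlXma}. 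So your proof establishes the version of the lemma that is both correct and sufficient; when writing it up, state the capital conclusion in aggregate form (or weaken the per-$j$ thresholds, e.g.\ to $2n\cdot M_j(\sigma)[s]$) rather than as the lemma currently reads.
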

Now we may obtain the required analogue of Lemma  \ref{T2RhWOVdo}.
\begin{lem}[Growth along special extension for decidably sided]\label{UZnKI3ndd}
Let $(M_j, f_j),  j<n$ 
be as in Lemma \ref{h5fMyKh9Tb}, let
$M_j[s]$  be canonical approximations of $M_j$,
and define $N:=\sum_{j<n} M_j$ and $N_s:=\sum_{j<n} M_j[s]$.
Given $\epsilon>0,p,s\in\Nat, \sigma\in\twomel$, 
if $\tau$ is the extension of
$\sigma$ given by Lemma \ref{h5fMyKh9Tb},
then for all $t>s$: 
\[
N_t(\sigma)-N_s(\sigma)<2^{-p}
\Rightarrow
N_t(\tau)\leq  
N_s(\tau) + 
2^{\delta \cdot |\tau| -p}
\]
where $\delta:=(1+\epsilon)/2$.
\end{lem}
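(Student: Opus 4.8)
The plan is to mimic the proof of Lemma~\ref{T2RhWOVdo}, replacing the single $0$-sided and $1$-sided summands by the $n$ prediction functions $f_j$, and using the \emph{canonical} approximations to control the telescoped differences. First I would fix $s<t$ and set, for $j<n$,
\[
M_j^{\ast}:=M_j[t]-M_j[s]=\sum_{s\leq i<t}\bigl(M_j[i+1]-M_j[i]\bigr).
\]
Since each $M_j[\cdot]$ is a canonical approximation, every increment $M_j[i+1]-M_j[i]$ is an $f_j$-sided martingale; being a finite sum of such, $M_j^{\ast}$ is a non-negative $f_j$-sided martingale. Moreover $N_t-N_s=\sum_{j<n}M_j^{\ast}$, so $N_t(\rho)=N_s(\rho)+\sum_{j<n}M_j^{\ast}(\rho)$ for every string $\rho$.

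Next I would feed in the hypothesis $N_t(\sigma)-N_s(\sigma)<2^{-p}$: because the $M_j^{\ast}$ are non-negative, this already forces $M_j^{\ast}(\sigma)<2^{-p}$ for each $j<n$. The key elementary fact about an $f_j$-sided martingale $M_j^{\ast}$ is that, at any state $\rho$, the favorable outcome is $f_j(\rho)$, and by the martingale identity $M_j^{\ast}(\rho\ast f_j(\rho))\in\bigl[M_j^{\ast}(\rho),2M_j^{\ast}(\rho)\bigr]$ while $M_j^{\ast}(\rho\ast(1-f_j(\rho)))\leq M_j^{\ast}(\rho)$. Thus, along the path from $\sigma$ to $\tau$, a correct $f_j$-guess multiplies the capital by at most $2$ and a false one cannot increase it, so $M_j^{\ast}(\tau)\leq M_j^{\ast}(\sigma)\cdot 2^{c_j}$, where $c_j$ is the number of correct $f_j$-guesses in $[\sigma,\tau]$. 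By the defining property of the special extension $\tau$ from Lemma~\ref{h5fMyKh9Tb}, $c_j<(1+\epsilon)(|\tau|-|\sigma|)/2=\delta(|\tau|-|\sigma|)\leq\delta\,|\tau|$, whence $M_j^{\ast}(\tau)\leq M_j^{\ast}(\sigma)\cdot 2^{\delta|\tau|}$.

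Summing over $j<n$ then closes the argument:
\[
N_t(\tau)-N_s(\tau)=\sum_{j<n}M_j^{\ast}(\tau)\leq 2^{\delta|\tau|}\sum_{j<n}M_j^{\ast}(\sigma)=2^{\delta|\tau|}\bigl(N_t(\sigma)-N_s(\sigma)\bigr)<2^{\delta|\tau|-p},
\]
i.e. $N_t(\tau)\leq N_s(\tau)+2^{\delta|\tau|-p}$, as required. I do not anticipate a genuine obstacle here: the one point that must be handled with care is that it is the \emph{canonical} nature of the approximations which makes each telescoped difference $M_j^{\ast}$ individually $f_j$-sided — a general positive difference of two $f_j$-sided martingales need not be $f_j$-sided — and this is exactly what licenses the per-guess doubling estimate. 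The remainder is the same bookkeeping as in Lemma~\ref{T2RhWOVdo}, with the coarser exponent $\delta|\tau|$ (rather than $\delta(|\tau|-|\sigma|)$) used only to keep the downstream parameter choices in Lemma~\ref{baLoQ2YMz} uniform.
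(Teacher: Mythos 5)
Your proof is correct and follows essentially the same route as the paper's: the paper proves this lemma by declaring it ``entirely analogous'' to Lemma~\ref{T2RhWOVdo}, and your telescoping of the canonical approximations into $f_j$-sided increments, the per-correct-guess doubling bound, the count of correct guesses from Lemma~\ref{h5fMyKh9Tb}, and the final summation over $j<n$ is exactly that analogy made explicit. Your closing observation that the stated exponent $\delta\cdot|\tau|-p$ is merely a coarsening of the sharper $\delta\cdot(|\tau|-|\sigma|)-p$ you actually obtain is also consistent with the paper.
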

The proof of Lemma \ref{UZnKI3ndd} for the special case where $N$ is itself a mixture of $f$-sided martingales
for a computable $f$
is entirely analogous to the proof of Lemma  \ref{T2RhWOVdo} which refers to single-sided martingales,
with the difference that Lemma \ref{h5fMyKh9Tb} is used in place of Lemma \ref{lGR1K9yUT}.
The case where $N$ is the sum of finitely many such mixtures (with distinct prediction functions $f_j$)
follows from the special case in the same way that 
the separable case of Lemma  \ref{T2RhWOVdo} follows from the special case of a single-sided martingale
(recall the first paragraph of the proof of Lemma  \ref{T2RhWOVdo}).

It remains to show that a straightforward adaptation
of the argument in \S\ref{QBksFMW19i}, \S\ref{RZE1YLbjLM} 
proves Lemma \ref{baLoQ2YMz}.
The entire set-up of \S\ref{QBksFMW19i} remains the same, including the parameter values and
Definition \ref{HJuByjJ6sWa} which is later used in the construction, with the exception that
instead of Lemma \ref{lGR1K9yUT} we use Lemma \ref{UZnKI3ndd}.
The construction of the required map in \S\ref{RZE1YLbjLM} remains exactly the same, except that
the updated version of Definition \ref{HJuByjJ6sWa} of special extensions is used
(based on Lemma \ref{UZnKI3ndd} instead of  Lemma \ref{T2RhWOVdo}).
The verification of the construction in \S\ref{RZE1YLbjLM}  also remains the same, except that
the reference to
Lemma \ref{T2RhWOVdo}  is replaced with a reference to Lemma \ref{UZnKI3ndd}.
This concludes the proof of  Lemma \ref{baLoQ2YMz} and, as explained above, the proof of
Theorem \ref{X2gQ1AwUza}.

\section{Conclusion and some questions}\label{7FjYtfZZf}
We have studied the strength of monotonous strategies, which bet constantly 
on the same outcome (single-sided martingales) 
or bet on a computable outcome (decidably-sided martingales).
In the case of computable strategies we have seen that they are as strong as
the unrestricted strategies, while in the case of uniform effective mixtures of strategies 
(strongly \lce martingales) they are significantly weaker.
On the other hand, for casino sequences of effective Hausdorff dimension less than 1/2,
successful \lce strategies can be replaced by successful uniform effective mixtures of 
single-sided strategies.

{\bf Limitations of the present work and open problems.}
Our main negative results, Theorems \ref{1iotxpCAHo} and \ref{X2gQ1AwUza},
rely on two main properties: (a) the given strategies are martingales and not
merely supermartingales; (b) the given monotonous martingales are not merely \lce but 
strongly l,c.e., \ie are assumed to have \lce wagers. Restriction (a) relates to the non-interchangeability 
between martingales and supermartingales under monotonousness, as discussed in \S\ref{6E1fkvgPTH};
the main interest on (a) is the connection with a problem of Kastermans, which we briefly discuss below.
Perhaps most significant is restriction (b), which rests on the difference between
{\em mixtures of computable families of monotonous strategies} on the one hand, and
{\em monotonous mixtures of 
computable families of strategies} on the other. The difference in these two
approaches of  combining monotonousness with computable enumerability of strategies,
described as uniform and non-uniform in (i), (ii) of \S\ref{VYuXf7zTQg}
respectively, relies on whether the {\em intermediate bets}
witnessed by a computable observer with access to the approximation of the strategy
are monotonous or not.
Our main open question is whether (b) is essential for Theorems \ref{1iotxpCAHo} and \ref{X2gQ1AwUza}:
\begin{equation}\label{oqRTjUrctz}
\parbox{14cm}{{\bf Question:} If a \lce martingale succeeds on $X$, does there exist a \lce 
single-sided strategy (\ie a single-sided martingale $M$ which is the 
mixture of a computable family of strategies) which succeeds on $X$?}
\end{equation}
Equivalently, we can ask if the standard notion of algorithmic randomness, \ml randomness,
can be defined with respect to single-sided  \lce martingales.
A third limitation (c) in Theorem \ref{X2gQ1AwUza} is the assumption, included in 
Definition \ref{C7kLPhxcBb}, that the prediction functions $f$ are {\em total computable}
and not merely partial computable, allowing the possibility of partiality on states $\sigma$
where the wager is 0. Such a generalization would formalize a notion of {\em partially decidably-sided}
strategies, which cannot be dealt with by the argument in the proof of Theorem \ref{X2gQ1AwUza}.

{\bf Relation to a problem of Kastermans.}
Consider the case of \lce supermartingales that are
partially decidably-sided, according to the above discussion; 
such strategies are known as {\em kastergales}, see  \citep[\S 7.9]{rodenisbook}).
Kastermans, as reported in \citep{Downey:2012:RCM:2367234} and \citep[\S 7.9]{rodenisbook} 
asked whether there exists a sequence where all kastergales are bounded, 
but some computably enumerable
strategy succeeds. 
A simple negative answer to this question would be that
for every real $X$ where a \lce martingale succeeds, there exists
a single-sided, or even just decidably-sided martingale which succeeds on $X$.
First, note that a positive answer to \eqref{oqRTjUrctz} would give a very simple and
elegant negative answer to Kastermans' question.
In the same fashion, Theorem \ref{X2gQ1AwUza} can be viewed as a partial negative answer to
Kastermans' question. Then limitations (a), (b) and (c) of our methods discussed above are
the obstacles in extending our partial answer to a full negative answer  to Kastermans' question.

%

\bibliographystyle{abbrvnat}
\bibliography{gratin}

\begin{thebibliography}{20}
\providecommand{\natexlab}[1]{#1}
\providecommand{\url}[1]{\texttt{#1}}
\expandafter\ifx\csname urlstyle\endcsname\relax
  \providecommand{\doi}[1]{doi: #1}\else
  \providecommand{\doi}{doi: \begingroup \urlstyle{rm}\Url}\fi

\bibitem[Bienvenu et~al.(2009)Bienvenu, Shafer, and Shen]{martin_hist_rand}
L.~Bienvenu, G.~Shafer, and A.~Shen.
\newblock On the history of martingales in the study of randomness.
\newblock \emph{Electronic Journal for History of Probability and Statistics},
  5\penalty0 (1), 2009.

\bibitem[Blando(2015)]{Blandothesis}
F.~Z. Blando.
\newblock \emph{From von Mises' Impossibility of a Gambling System to
  Probabilistic Martingales}.
\newblock {MSc} dissertation, Institute of Logic, Language and Computation,
  University of Amsterdam, The Netherlands, 2015.
\newblock Supervised by Paul Vit\'{a}nyi and Michiel van Lambalgen.

\bibitem[Doob(1971)]{doobmartinAMS}
J.~L. Doob.
\newblock What is a martingale?
\newblock \emph{The American Mathematical Monthly}, 78\penalty0 (5):\penalty0
  451--463, May 1971.

\bibitem[Downey(2012)]{Downey:2012:RCM:2367234}
R.~Downey.
\newblock Randomness, computation and mathematics.
\newblock In \emph{Proceedings of the 8th Turing Centenary Conference on
  Computability in Europe: How the World Computes}, CiE'12, pages 162--181,
  Berlin, Heidelberg, 2012. Springer-Verlag.

\bibitem[Downey et~al.(2004)Downey, Griffiths, and
  Laforte]{Downey.Griffiths.ea:04}
R.~Downey, E.~Griffiths, and G.~Laforte.
\newblock On {S}chnorr and computable randomness, martingales, and machines.
\newblock \emph{MLQ Math. Log. Q.}, 50\penalty0 (6):\penalty0 613--627, 2004.

\bibitem[Downey and Hirschfeldt(2010)]{rodenisbook}
R.~G. Downey and D.~Hirschfeldt.
\newblock \emph{Algorithmic Randomness and Complexity}.
\newblock Springer, 2010.

\bibitem[L\'{e}vy(1937)]{Levymartin}
P.~L\'{e}vy.
\newblock \emph{Th\'{e}orie de l'Addition des Variables Al\'{e}atoires}.
\newblock Gauthier-Villars, Paris, 1937.

\bibitem[Li and Vit{\'a}nyi(1997)]{Li.Vitanyi:93}
M.~Li and P.~M. Vit{\'a}nyi.
\newblock \emph{An introduction to {K}olmogorov complexity and its
  applications}.
\newblock Graduate Texts in Computer Science. Springer-Verlag, New York, second
  edition, 1997.

\bibitem[Lieb et~al.(2006)Lieb, Osherson, and Weinstein]{villeeasier}
E.~Lieb, D.~Osherson, and S.~Weinstein.
\newblock Elementary proof of a theorem of {Jean Ville}.
\newblock Unpublished manuscript: arXiv:cs/0607054v1, 2006.

\bibitem[Lutz(2000)]{Lutz:00}
J.~H. Lutz.
\newblock Gales and the constructive dimension of individual sequences.
\newblock In \emph{Automata, languages and programming (Geneva, 2000)}, volume
  1853 of \emph{Lecture Notes in Comput. Sci.}, pages 902--913. Springer,
  Berlin, 2000.

\bibitem[Lutz(2003)]{Lutz:03}
J.~H. Lutz.
\newblock The dimensions of individual strings and sequences.
\newblock \emph{Information and Computation}, 187, 2003.

\bibitem[Mansuy(2005)]{histmartinguse}
R.~Mansuy.
\newblock Histoire de martingales [the origins of the word `martingale'].
\newblock \emph{Math\'{e}matiques \& Sciences Humaines [Mathematical Social
  Sciences]}, 169\penalty0 (1):\penalty0 105--113, 2005.
\newblock Translated from the French by Ronald Sverdlove, in Electronic Journal
  for History of Probability and Statistics . Vol.5, no.1, June 2009.

\bibitem[Mayordomo(2002)]{Mayordomo:02}
E.~Mayordomo.
\newblock A {K}olmogorov complexity characterization of constructive
  {H}ausdorff dimension.
\newblock \emph{Inform. Process. Lett.}, 84\penalty0 (1):\penalty0 1--3, 2002.

\bibitem[Nies(2009)]{Ottobook}
A.~Nies.
\newblock \emph{Computability and Randomness}.
\newblock Oxford University Press, 2009.

\bibitem[Schnorr(1971{\natexlab{a}})]{Schnorr:71}
C.~Schnorr.
\newblock A unified approach to the definition of random sequences.
\newblock \emph{Mathematical Systems Theory}, 5\penalty0 (3):\penalty0
  246--258, 1971{\natexlab{a}}.

\bibitem[Schnorr(1971{\natexlab{b}})]{Schnorr:75}
C.~Schnorr.
\newblock \emph{Zuf\"alligkeit und {W}ahrscheinlichkeit. {E}ine algorithmische
  {B}egr\"undung der {W}ahrscheinlichkeitstheorie}.
\newblock Springer-Verlag, Berlin, 1971{\natexlab{b}}.
\newblock Lecture Notes in Mathematics, Vol. 218.

\bibitem[Snell(1982)]{martmathintel}
J.~L. Snell.
\newblock Gambling, probability and martingales.
\newblock \emph{The Mathematical Intelligencer}, 4\penalty0 (3):\penalty0
  118--124, 1982.

\bibitem[Tadaki(2002)]{MR1888278}
K.~Tadaki.
\newblock A generalization of {C}haitin's halting probability {$\Omega$} and
  halting self-similar sets.
\newblock \emph{Hokkaido Math. J.}, 31\penalty0 (1):\penalty0 219--253, 2002.

\bibitem[van Lambalgen(1987)]{vanlamb87}
M.~van Lambalgen.
\newblock \emph{Random sequences}.
\newblock {PhD} dissertation, University of Amsterdam, The Netherlands, 1987.
\newblock ISBN 9729961506.

\bibitem[Ville(1939)]{villemartin}
J.~Ville.
\newblock \emph{\'{E}tude Critique de la Notion de Collectif.}
\newblock Gauthier-Villars, Paris, 1939.
\newblock Monographies des Probabilit\'{e}s. Calcul des Probabilit\'{e}s et ses
  Applications.

\end{thebibliography}
\end{document}